\definecolor{cadmiumgreen}{rgb}{0.0, 0.42, 0.24}
\newdefinition{rmk}{Remark }
\newdefinition{prop}{Proposition }
\def\@author#1{\g@addto@macro\elsauthors{\normalsize%
    \def\baselinestretch{1}%
    \upshape\authorsep#1\unskip\textsuperscript{%
      \ifx\@fnmark\@empty\else\unskip\sep\@fnmark\let\sep=,\fi
      \ifx\@corref\@empty\else\unskip\sep\@corref\let\sep=,\fi
      }%
    \def\authorsep{\unskip,\space}%
    \global\let\@fnmark\@empty
    \global\let\@corref\@empty  
    \global\let\sep\@empty}%
    \@eadauthor={#1}
}
\begin{document}
\begin{frontmatter}

\title{Multiscale Mixed Methods with Improved Accuracy: The Role
of Oversampling and Smoothing}

\author[]{Dilong Zhou $^1$}
\cortext[cor1]{Corresponding author}
\ead{dxz200000@utdallas.edu}
\author[]{Rafael T. Guiraldello $^2$}
\ead{rafaeltrevisanuto@gmail.com}
\author[]{Felipe Pereira $^1$ \corref{cor1}}
\ead{luisfelipe.pereira@utdallas.edu}

\address{$^1$ Department of Mathematical Sciences, The University of Texas at Dallas, \\
800 W. Campbell Road, Richardson, Texas 75080-3021, USA \\
$^2$ Piri Technologies, LLC \\
 1000 E. University Ave., Dept. 4311, Laramie, WY 82071-2000, USA}

\begin{keyword}
Multiscale Methods, Mixed Finite Elements, Oversampling, Porous media, Smoothing, Robin boundary
conditions.
\end{keyword}

\begin{abstract}
 
Multiscale mixed methods based on non-overlapping domain decompositions can efficiently handle the solution of significant subsurface flow problems in very heterogeneous formations of interest to the industry, especially when implemented on multi-core supercomputers. 
Efficiency in obtaining numerical solutions is dictated by the choice of interface spaces that are selected: the smaller the dimension of these spaces, the better, in the sense that fewer multiscale basis functions need to be computed, and smaller interface linear systems need to be solved. Thus, in solving large computational problems, it is desirable to work with piecewise constant or linear polynomials for interface spaces. However, for these choices of interface spaces, it is well known that the flux accuracy is of the order of $10^{-1}$.

This study is dedicated to advancing an efficient and accurate multiscale mixed method aimed at addressing industry-relevant problems. A distinctive feature of our approach involves subdomains with overlapping regions, a departure from conventional methods. We take advantage of the overlapping decomposition to introduce a computationally highly efficient smoothing step designed to rectify small-scale errors inherent in the multiscale solution. The effectiveness of the proposed solver, which maintains a computational cost very close to 
its predecessors, is demonstrated through a series of numerical studies. Notably, for scenarios involving modestly sized overlapping regions and employing just a few smoothing 
steps, a substantial enhancement of two orders of magnitude in flux accuracy is achieved with the new approach.

\end{abstract}

\end{frontmatter}

\section{Introduction}\label{intro}

Multiphase flows in subsurface formations are governed by coupled systems of partial differential equations. In the case of two and three-phase flows 
of interest to the oil industry frequently these equations are decoupled by operator splitting methods (see \cite{douglas1997numerical, operator2009} for such methods developed, respectively, for two and three-phase flows) and a convection-diffusion equation needs to be solved 
along with a second-order elliptic equation. The focus of this work is on the numerical solution of these elliptic problems that are challenging because the permeability coefficient may exhibit very large contrast and classical iterative schemes are known to converge slowly.

In this paper, we direct our attention to multiscale methods, which have received significant attention due to their natural parallelizability on multi-core computers. Over the past two decades, various research groups have pioneered specific methodologies integrating domain decomposition with multiscale techniques. These methods generally fall into three distinct categories: Multiscale Finite Volume, Multiscale Finite Element, and Multiscale Mixed Finite Element approaches.

The Multiscale Finite Volume Method was introduced in 1997 to solve elliptic problems encountered in two-phase flows \cite{HouMulti, HouMultivol}. Following this, the Multiscale Finite Element Method was developed in 2003 \cite{jenny2003multi}. In this study, we focus on Multiscale Mixed Finite Element Methods, which encompass several variants. Here we mention
the ones that are directly relevant to our work. The Multiscale Mixed Finite Element Method (MsMFEM) was first formulated in 2003 \cite{chen_hou}, followed by the development of the Multiscale Mortar Mixed Finite Element Method (MMMFEM) in 2007 \cite{arbogast}, and the Multiscale Hybrid-Mixed Finite Element Method (MHM) in 2013 \cite{araya2013multiscale}.
Our research focuses particularly on the Multiscale Robin Coupled Method (MRCM), introduced in 2018 \cite{guiraldello2018multiscale}, which can be considered a generalization of the Multiscale Mixed Method (MuMM) introduced in 2014 \cite{pereira}. This generalization allows for arbitrary and independent interface spaces for the pressure and flux variables.
 
The exceptional scalability of MRCM for solving industry-relevant subsurface flow problems on multi-core supercomputers has been demonstrated in previous studies \cite{Recursive2023, HPC2022}. Recent advancements have introduced new interface spaces utilizing the Singular Value Decomposition \cite{interface2019}, as well as approaches that take advantage of the underlying physics of problems \cite{interface2021}. While the pressure and normal flux components in MRCM may have discontinuities in the resulting fine grid solution, a post-processing method has been devised to address this issue, ensuring continuous normal flux components \cite{velocityMM2020}. With this enhancement, MRCM becomes applicable for approximating multiphase flows in porous media. Additionally, the approximation of two-phase flows with MRCM has been explored in recent publications, which have introduced accurate and computationally efficient methods based on operator splitting and implicit time-stepping methodologies \cite{MMtwo2020, MRCM2022,MPM1,MPM2}.

Robin-type boundary conditions are imposed by MRCM on all local problems defined on a non-overlapping partition of the domain. Based in a domain decomposition methodology \cite{Douglas1993}, MRCM ensures compatibility across the interfaces of adjacent subdomains through weak continuity of pressure and normal components of the flux. At the heart of the MRCM framework lies a critical parameter, denoted as $\alpha$, which governs the relative importance of the pressure and the normal component of the flux in the specification of Robin boundary conditions. This parameterization enables MRCM to replicate outcomes from both the Multiscale Mortar Mixed Finite Element Method (MMMFEM) and the Multiscale Hybrid-Mixed Finite Element Method (MHM). In the case of MMMFEM, pressure continuity along the interface is preserved at the fine-grid scale, while flux continuity is weakly satisfied on a larger scale, typically corresponding to the size of each subdomain. By assigning a higher weight to pressure continuity via $\alpha$, MRCM converges towards MMMFEM. Conversely, in MHM, flux continuity along the interface is achieved at the fine-grid scale. Adjusting $\alpha$ to prioritize flux continuity leads MRCM to converge towards MHM
\cite{guiraldello2018multiscale}.

However, even with the latest version of MRCM utilizing non-overlapping partitions, a notable resonance error persists along the interface of solutions obtained with
multiscale mixed methods. To mitigate this error, the oversampling method has been introduced. Initially employed in the Multiscale Finite Volume Method in 1997 \cite{first-over} and subsequently substantiated in 1999 \cite{oversampleprove1,oversampleprove2}, the oversampling method has undergone further development in recent years.
Additional references concerning recent advancements in Multiscale Finite Volume methods with oversampling are available in \cite{2004overfv1, 2007overfv}. Numerous references exist for Multiscale Finite Element methods incorporating oversampling. References dedicated to elliptic equations can be located in \cite{2004overfe1, 2004overfe2, 2005overfe1, 2008over1, 2008overfe2, 2011over, 2013overfe2, 2017over1} and papers focusing on two-phase flow can be found in \cite{2005over, 2006overfe1, 2008overfe1}. 
In the area of Multiscale Finite Element methods with oversampling, specialized methodologies such as Extended Multiscale Finite Element \cite{2010overfe1, 2012over, 2013overfe1}, Generalized Multiscale Finite Element \cite{2021over}, and Multiscale DG methods \cite{2005book} have been also explored.

In the realm of multiscale mixed methods, the first application of the oversampling technique in MsMFEM dates back to 2003 \cite{chen_hou}. In this work the authors considered rapidly oscillating coefficients to address significant resonance errors along the interface. Subsequent advancements have further refined oversampling techniques to enhance accuracy and performance. In 2012, an innovative approach introduced a special term into the Multiscale Basis Function, using Green's function to mitigate resonance errors  \cite{ExpandedMixed}.
Expanding on these efforts, a paper in 2017 integrated upscaling techniques into oversampling methodologies, to approximate two-phase flow problems rather than solely focusing on elliptic equations \cite{over2017}. This approach shows the importance of considering the entire domain in addressing multiscale challenges. Meanwhile, oversampling for the Mixed Generalized Multiscale Finite Element Method was introduced in 2015 
\cite{GMFE2015}, with further advancements in 2017 \cite{GMFE2017}. These works utilized both offline and online stages in the numerical solution to enhance accuracy. The offline stage involves compiling boundary conditions for each multiscale basis function using preset spaces such as piecewise constant or piecewise linear functions. In contrast, the online stage utilizes results from the offline stage to update boundary values.

%
%
In our current study, our primary focus lies on enhancing the Multiscale Robin Coupled Method (MRCM) by integrating two distinct strategies aimed at improving the accuracy of numerical solutions while preserving the computational efficiency inherent to MRCM \cite{Recursive2023, HPC2022}.
Our first significant contribution involves the integration of the oversampling technique into MRCM. This novel approach necessitates non-trivial modifications to the original method, as multiscale basis functions now need to be computed in terms of novel informed spaces. Despite these modifications, compatibility conditions are maintained and enforced on the skeleton of an underlying non-overlapping partition of the domain within a nonconforming
multiscale approach.
Our second contribution involves the introduction of a smoothing step in the framework of multiscale mixed methods, a concept commonly employed in the context of overlapping Schwarz domain decomposition methods \cite{DDintro, DomainD}. This step serves as a tool to rectify small-scale errors inherent in the multiscale solution, further enhancing the accuracy of our numerical approach. The computational cost of these steps is very small because a factorization computed in the first step
can be reused.

Following the description of the novel procedure, we conduct testing in two distinct examples. The first example has an analytical solution, while the second example addresses problems relevant to the oil industry. Through comprehensive studies, we evaluate various aspects including convergence rates, the significance of oversampling, the impact of smoothing steps, the combined effect of oversampling and smoothing, and the influence of the $\alpha$ parameter on numerical solutions.
Our findings reveal that in scenarios characterized by modestly sized overlapping regions and utilizing only a few smoothing steps, our new approach yields a remarkable enhancement in flux accuracy, with improvements reaching two orders of magnitude compared to previous methods. Moreover,
results produced with the new approach with piecewise-constant interface spaces are comparable to MRCM results obtained with piecewise linear spaces.

The paper is structured into several sections as follows. The first section focuses on the formulation of the new MRCM with oversampling with subsections dedicated to detailing the formulation of the method along with its well-posedness, and the definition of multiscale basis functions. Following this, the next section describes the concept of a smoothing step, outlining its significance and implementation within the context of the MRCM methodology. 
  In the subsequent section, we present the numerical strategy employed to maximize the computational efficiency of the previously introduced methods.
    The subsequent section is dedicated to numerical studies and is subdivided into subsections that present the results of our numerical experiments. We consider a problem with an analytical solution and another problem involving a permeability field derived from the SPE 10 project. Through these numerical studies, we aim to assess the effectiveness and performance of the proposed enhancements to the MRCM in various scenarios.

\section{The MRCM with Oversampling (MRCM-O)}

We consider single-phase flow in porous media. The governing equations for pressure $p$ and Darcy velocity ${\bf u}$ are given by
\begin{eqnarray}
{\bf u} & = & -\, K\,\nabla p \qquad \mbox{in}~\Omega \label{eq1a}\\
\nabla \cdot {\bf u} & = & f \qquad \mbox{in}~\Omega \label{eq1b}\\
p & = & g \qquad \mbox{on}~\partial \Omega_p \\
{\bf u}\cdot {\bf n} & = & z \qquad \mbox{on}~\partial \Omega_u \label{eq1d}
\end{eqnarray}
where $\Omega\subset \mathbb{R}^d$, $d=2$ or $3$ is the
domain of the problem, $K$ is a symmetric, uniformly positive definite tensor
with components in $L^\infty(\Omega)$,
$f\,\in\,L^2(\Omega)$ is the source term, $g\,\in\,H^{\frac12}(\partial\Omega_p)$
is the pressure condition on the boundary, $z\,\in\,H^{-\frac12}(\partial\Omega_u)$
is the normal velocity condition on the boundary and ${\bf n}$ is the outer normal to $\partial{\Omega}$.

\subsection{Formulation}
Consider $\mathcal{T}_h$ to be a subdivision of $\Omega\subset \mathbb{R}^d$ into
a Cartesian mesh of $d$-dimensional rectangles.
From this mesh, define partitions of $\Omega$ into non-overlapping subdomains $\{\Omega_i\}_{i=1,\ldots,m}$, and define $\Gamma$, the skeleton of the domain decomposition, to be the union of all interfaces 
$\Gamma_{i,j}=\overline{\Omega}_i\cap\overline{\Omega}_j, \forall i,j=1\dots\,m$. We also define $\Gamma_i = \partial\Omega_i\setminus\partial\Omega$. The restriction of the computational mesh to a subdomain $\omega \subset \Omega$  is denoted by $\mathcal{T}_h^{\omega}$.
For each subdomain $\Omega_i$, define $\hat{\Omega}_i$ to be the augmented subdomain comprising $\Omega_i$ along with an adjoining region.
Figure \ref{overlapjpg} illustrates the two partitions in the case of $d = 2$. Notice that $\bigcup_{i=1}^m \hat{\Omega}_i$ defines an overlapping domain decomposition of $\Omega$. This decomposition is of particular interest to the proposed formulation since it will be used to build the Lagrange multiplier \emph{informed spaces} \cite{guiraldello2018interface}.
 Three length scales appear
in the formulation of the MRCM with oversampling: $\hat H > H \geq h$ that are also illustrated 
in Figure \ref{overlapjpg}. $\hat H$ refers to the size of the oversampling regions, $H$ indicates the
size of non-overlapping subdomains and $h$ is the size of the finest grid used in the numerical 
approximation of Eqs. (\ref{eq1a}) - (\ref{eq1d}). 
We now define the local and interface discrete spaces, followed by a detailed description of the constructing of the Lagrange multiplier spaces based on the overlapping domain decomposition.

For each $\Omega_i$ we define the lowest-order Raviart-Thomas \cite{RaviartThomas::1977}
spaces for velocity and pressure, 
\begin{eqnarray}
{\bf V}_{h}^i& = & \{{\bf v}\,\in\,H(\mbox{div},\Omega_i)~,
~v_j({\bf x})|_K=p_{j1}(x_1)p_{j2}(x_2)\ldots~,\forall\,K\,\in\,\mathcal{T}_h^{\Omega_i},\nonumber \\
& &~~~\mbox{with}~
p_{jk}\,\in\,\mathbb{P}_1~\mbox{if}~j=k~,
p_{jk}\,\in\,\mathbb{P}_0~\mbox{if}~j\neq k~\} \\
Q_h^i& = & \{q\,\in\,L^2(\Omega_i)~,~q({\bf x})|_K\,\in\,\mathbb{P}_0\}~,
\end{eqnarray}
where the space of polynomials of degree $\leq k$ is written as $\mathbb{P}_k$.

Consider $S_h$ to be any subset of edges/faces of $\mathcal{T}_h$ (e.g., $\Gamma$),
then define
\begin{equation}
F_h(S_h) = \{ f:{S}_h\to \mathbb{R}~|~f|_e\,\in\,\mathbb{P}_0~,
~\forall\,e\,\in\,{S}_h \}~.
\end{equation}
Denote by $\mathcal{E}_h$ the set of all faces/edges of $\mathcal{T}_h$ contained 
in $\Gamma$. 
On each edge/face $e\,\in\,\mathcal{T}_h$ we introduce
a unique normal $\check{\bf n}$, which is the exterior normal
to $\partial \Omega$ if $e\,\in\,\partial \Omega$, and if $e\,\in\,\mathcal{E}_h$, 
 then it is defined
as the unit normal exterior to the adjacent subdomain with smallest index, 
$\min \{i,j\}$.
We assume that $\partial \Omega_u$ and $\partial \Omega_p$ 
coincide with subsets of
$\mathcal{T}_h\cap \partial \Omega$ and introduce
\begin{eqnarray}
{\bf V}_{hy}^i&=&\{{\bf v}\,\in\,{\bf V}_{h}^i~,~
{\bf v}\cdot \check{\bf n}=y~\mbox{on}\,\partial\Omega_i\cap\partial\Omega_u
\}~, \label{eq:Vhy}
\end{eqnarray}
where we have assumed that $y$ belongs to $F_h(\partial \Omega_u)$.
 
Before defining the new method we need to introduce coarse (global) 
subspaces of  $F_h(\mathcal{E}_h)$ that will be used in setting consistency
conditions between adjacent subdomains, namely, weak continuity of the pressure and
the normal component of Darcy's velocity. We refer to them as $M_H$ and $V_H$ for the 
imposition of weak continuity of normal component of the flux and pressure, respectively.
In this work we take $M_H$ and $V_H$ to be either piecewise constant or piecewise linear polynomials.
Another important space in our formulation is the space of Lagrange multipliers $\Lambda^i_{H} \subset F_h(\mathcal{E}_h\cap\Gamma_i)$, which is build based on $\hat\Omega^i$, i.e., an oversampling region of $\Omega_i$.  
In order to build the Lagrange multiplier spaces $\Lambda^i_H$, consider the following set
of $N$ Darcy problems posed on $\hat\Omega_i$ given by  

\begin{equation}
  \begin{array}{rclll}
    {\bf u}_h^{k} &=& -K\nabla_h\,p_h^{k} &&\mbox{in} \ \hat\Omega_i \\
    \nabla_h\cdot{\bf u}_h^{k} &=& 0 &&\mbox{in}  \ \hat\Omega_i \\
    p_h^{k} & = & 0 \qquad &&\mbox{on}~\partial\hat\Omega_i\cap\partial\Omega_p \\
    {\bf u}_h^{k}\cdot {\bf n}^i & = & 0 \qquad &&\mbox{on}~\partial\hat\Omega_i\cap\partial\Omega_u \\
    -\beta_i\,{\bf u}_h^{k}\cdot {\bf n}^i + p_h^{k} &=& \lambda^k &&\mbox{on}~\partial\hat\Omega_i\setminus\partial\Omega
  \end{array}, \label{eq:oversampling_problem}
\end{equation}
in which $\nabla_h\cdot$ and $\nabla_h$ denotes the discrete divergence and gradient operators, respectively,
and $\lambda^k$ is a piecewise polynomial functions defined on $F_h(\partial\hat\Omega_i\setminus\partial\Omega)$ for $k=1,\dots,N$. 
The discrete spaces used to solve problems (\ref{eq:oversampling_problem}) are the lowest-order Raviart-Thomas
spaces for subdomain $\hat\Omega_i$ with mesh $\mathcal{T}_h^{\hat\Omega_i}$.
After solving the $N$ problems for each subdomain $\hat\Omega_i$, retrieve the normal component of the velocity ${\bf u}_h^{k}\cdot {\bf n}^i$ and the pressure $\pi^k$ on the interface $\Gamma_i$ and write the following functions

\begin{equation}
  \phi_i^k = -\beta_i\,{\bf u}_h^{k}\cdot {\bf \check{n}}^i|_{\Gamma_i} + \pi^k|_{\Gamma_i}, \ \forall\,k=1,\dots,N,  
  \label{informed_space}  
\end{equation}
in which notation $|_{\Gamma_i}$ is used to reinforce the restriction of the solution to the interface $\Gamma_i$. Finally we define $\Lambda^i_H = \mbox{span}\left\{\phi_i^1,\phi_i^2,..,\phi_i^N\right\}$ for each $\Omega_i$. Having defined all necessary spaces, we are now prepared to introduce the proposed method.

The discrete variational formulation of the Multiscale Robin Coupled Method with Oversampling (MRCM-O) reads as: 
Find $({\bf u}_h^i,p_h^i,\lambda_h^i)\,\in\,{\bf V}_{hz}^i\times Q_h^i
\times \Lambda^i_{H}$, for
$i=1,\ldots,m$, such that
\begin{eqnarray}
(K^{-1}{\bf u}_h^i,{\bf v})_{\Omega_i}-(p_h^i,\nabla\cdot {\bf v})_{\Omega_i} 
+(\beta_i\,{\bf u}_h^i\cdot\check{\bf n}^i,{\bf v}\cdot\check{\bf n}^i)_{\Gamma_i}
+(\lambda_h^i,{\bf v}\cdot\check{\bf n}^i)_{\Gamma_i} \nonumber \\ = -(g,{\bf v}\cdot\check{\bf n}^i)_{\partial\Omega_i\cap\partial\Omega_p},  \label{eq11d}\\
(q,\nabla\cdot {\bf u}_h^i)_{\Omega_i}  =   (f,q)_{\Omega_i}, \label{eq12d}\\
\sum_{i=1}^m ({\bf u}_h^i\cdot \check{\bf n}^i,M)_{\Gamma_i} = 0,  \label{eq13d}\\
\sum_{i=1}^m (\beta_i\,{\bf u}_h^i\cdot \check{\bf n}^i+\lambda_h^i,V \,\check{\bf n}^i\cdot\check{\bf n})_{\Gamma_i} = 0,  \label{eq14d}
\end{eqnarray}
hold for all $({\bf v},q)\,\in\,{\bf V}_{h0}^i$ and for all $(M,V)\,\in\, M_H \times V_H \subset F_h(\mathcal{E}_h)\times\,F_h(\mathcal{E}_h)$.
In the above equations $\beta_i > 0$ are the Robin condition parameters. We remark that
in line with \cite{guiraldello2018multiscale} when specifying Robin boundary conditions 
\begin{equation}
 -\beta_i \, {\bf u}^i \cdot \check{\bf n}^i + p^i = g_R \label{eq:genRobin}
\end{equation}
where $g_R$ is a prescribed value, we write
\begin{equation}
 \beta_i\left({\bf x}\right) = \dfrac{\alpha H}{K_{H}\left({\bf x}\right)}
 \end{equation}
where $K_{H}$ refers to the harmonic average of adjacent $K$ values and $\alpha$ is a dimensionless parameter that determines the relative importance of the normal component of the flux and the pressure. The importance of the parameter $\alpha$ has been discussed in Section \ref{intro}. The MRCM-O formulation should be compared with the Two-Lagrange-Multiplier formulation introduced in \cite{guiraldello2018multiscale}. 
Notice that the functions of space $\Lambda^i_{H}$ are the Robin boundary conditions to be imposed to $\Omega_i$, such that
its solution is the restriction of solutions (multiscale basis functions) computed by solving \eqref{eq:oversampling_problem}. This integration enables us to seamlessly incorporate the oversampling strategy into our non-overlapping, well-defined methodology, as outlined below.

\begin{prop}
Given spaces $M_H\times\,V_H \subset F_h(\mathcal{E}_h)\times\,F_h(\mathcal{E}_h)$, the solution $({\bf u}_h^i,p_h^i,\lambda_h^i)$ in  $\Pi_{i=1}^m{\bf V}_{hz}^i\times Q_h^i
\times \Lambda^i_{H}$ to the discrete formulation (\ref{eq11d})\,-\,(\ref{eq14d}) is unique  if the following restrictions holds:
\begin{enumerate}
  \item $\mbox{dim}\left(M_H\right) = \mbox{dim}\left(V_H\right)$,
  \item $\sum_{i=1}^{m} \mbox{dim}\left(\Lambda^i_H\right) = \mbox{dim}\left(M_H\right) + \mbox{dim}\left(V_H\right)$. 
\end{enumerate}
\end{prop}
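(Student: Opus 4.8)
The plan is to argue by linearity together with a dimension count, and to reduce the question to the injectivity of an interface operator. Since the formulation (\ref{eq11d})--(\ref{eq14d}) is linear in $({\bf u}_h^i,p_h^i,\lambda_h^i)$, uniqueness is equivalent to showing that the homogeneous problem, obtained by setting $f=0$, $g=0$ and $z=0$, admits only the trivial solution. First I would count unknowns against equations: the local blocks (\ref{eq11d})--(\ref{eq12d}) contribute $\sum_i(\dim{\bf V}_{h0}^i+\dim Q_h^i)$ equations and an equal number of $({\bf u}_h^i,p_h^i)$ unknowns, while the multipliers supply $\sum_i\dim(\Lambda^i_H)$ unknowns and the compatibility conditions (\ref{eq13d})--(\ref{eq14d}) supply $\dim(M_H)+\dim(V_H)$ equations. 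Restriction (2) says exactly that these last two numbers coincide, so the global system is square; consequently uniqueness is equivalent to injectivity, and it suffices to exhibit a trivial kernel.

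Next I would exploit the local structure. For each fixed $\lambda_h^i\in\Lambda^i_H$, the local problem (\ref{eq11d})--(\ref{eq12d}) is a standard lowest-order Raviart--Thomas mixed system with a Robin term: the form $a_i({\bf u},{\bf v})=(K^{-1}{\bf u},{\bf v})_{\Omega_i}+(\beta_i\,{\bf u}\cdot\check{\bf n}^i,{\bf v}\cdot\check{\bf n}^i)_{\Gamma_i}$ is coercive on the divergence-free subspace, the ${\bf V}_{h0}^i$--$Q_h^i$ pair satisfies the inf-sup condition, and $\beta_i>0$ removes the usual constant-pressure nullspace. Hence $\lambda_h^i\mapsto({\bf u}_h^i,p_h^i)$ is a well-defined linear solution operator, and substituting it into (\ref{eq13d})--(\ref{eq14d}) reduces the homogeneous system to a square linear system for $\lambda_h=(\lambda_h^1,\dots,\lambda_h^m)$ alone. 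Testing (\ref{eq11d}) with ${\bf v}={\bf u}_h^i$ and (\ref{eq12d}) with $q=p_h^i$, summing over $i$, and using the weak Robin identity $p_h^i|_{\Gamma_i}=\beta_i\,{\bf u}_h^i\cdot\check{\bf n}^i+\lambda_h^i$, the Robin contributions cancel and I obtain the energy identity $\sum_i(K^{-1}{\bf u}_h^i,{\bf u}_h^i)_{\Omega_i}+\sum_i(p_h^i,{\bf u}_h^i\cdot\check{\bf n}^i)_{\Gamma_i}=0$.

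The interpretation of the compatibility conditions is the bridge to the conclusion. Writing the interface contributions in terms of averages and jumps, (\ref{eq13d}) states that the jump of the normal flux is $L^2(\Gamma)$-orthogonal to $M_H$, and (\ref{eq14d}) that the jump of the pressure trace is orthogonal to $V_H$. I would then combine these two orthogonalities with restriction (1), $\dim(M_H)=\dim(V_H)$, which balances the flux-continuity and pressure-continuity blocks of the reduced interface operator, to conclude that the only $\lambda_h$ producing simultaneously vanishing weak flux and pressure jumps is $\lambda_h=0$; this forces ${\bf u}_h^i=0$, whence $p_h^i$ is constant on each $\Omega_i$ and equal to $\lambda_h^i$ on $\Gamma_i$, and a final use of (\ref{eq14d}) together with the pressure datum on $\partial\Omega_p$ makes all these constants vanish. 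Throughout, the equivalence with the Two-Lagrange-Multiplier formulation of \cite{guiraldello2018multiscale} may be invoked to transfer the abstract well-posedness machinery, the only genuinely new ingredient being that the multiplier spaces are now the informed spaces $\Lambda^i_H$ built from the oversampling problems (\ref{eq:oversampling_problem}).

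I expect the main obstacle to be precisely the injectivity of the reduced interface operator. The energy identity alone does not close the argument: for coarse $M_H$ and $V_H$ the fine-scale averages of pressure and flux do not belong to these spaces, so the cross term $\sum_i(\lambda_h^i,{\bf u}_h^i\cdot\check{\bf n}^i)_{\Gamma_i}$ cannot be made to vanish by orthogonality, and no pure coercivity estimate is available. The delicate step is therefore to show that restrictions (1)--(2), together with the structure of the informed spaces $\Lambda^i_H$, guarantee that the square interface system has trivial kernel; this is where the counting argument must be upgraded from a mere balance of dimensions to a genuine nondegeneracy (full-rank) statement.
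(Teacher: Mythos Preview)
Your overall strategy---counting dimensions to obtain a square system, then showing the homogeneous problem has only the trivial solution---is exactly the paper's. The paper also makes concrete your orthogonality reading of (\ref{eq13d})--(\ref{eq14d}) by choosing the test functions $M=\tilde M$ and $V=\tilde V$ as the $L^2$-projections of the normal-flux and pressure jumps onto $M_H$ and $V_H$, so that those two equations become $\sum_{i<j}\|\tilde M\|_{\Gamma_{ij}}^2=0$ and $\sum_{i<j}\|\tilde V\|_{\Gamma_{ij}}^2=0$.

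The essential divergence is at the step you correctly flag as the obstacle. The paper does not reduce to an interface operator and argue its injectivity; instead it simply \emph{sets} $\lambda_h^i=0$ at the very start of the homogeneous analysis, alongside $f=g=z=0$. With $\lambda_h^i=0$ the cross term $\sum_i(\lambda_h^i,{\bf u}_h^i\cdot\check{\bf n}^i)_{\Gamma_i}$ disappears, the energy identity collapses to
\[
\sum_{i=1}^m\Big[(K^{-1}{\bf u}_h^i,{\bf u}_h^i)_{\Omega_i}+\|\sqrt{\beta_i}\,{\bf u}_h^i\cdot\check{\bf n}^i\|_{\Gamma_i}^2\Big]=0,
\]
whence ${\bf u}_h^i=0$, and then $p_h^i=0$ by the local Raviart--Thomas inf-sup. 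In other words, the paper takes as a hypothesis precisely what you are trying to derive. Your worry that the dimension balance in restrictions (1)--(2) does not by itself upgrade to a genuine full-rank statement for the multipliers is therefore well placed: the paper's argument, as written, does not supply that missing nondegeneracy step either, and establishes only that $({\bf u}_h^i,p_h^i)=(0,0)$ conditional on $\lambda_h^i=0$, which stops short of the uniqueness claimed for the full triple $({\bf u}_h^i,p_h^i,\lambda_h^i)$.
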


\begin{proof}
Assuming the above restrictions holds will lead (\ref{eq11d})\,-\,(\ref{eq14d}) into a square linear system. Set $g=z=f=0$ and 
$\lambda_h^i = 0$ for $i=1,\dots,m$., and take ${\bf v}={\bf u}_h^i$, $q=p_h^i$, 
$M=\tilde{M}:=\Pi_{0,M_H}({\bf u}_h^i\cdot\,{\bf \check{n}}^i+{\bf u}_h^j\cdot\,{\bf \check{n}}^j)|_{\Gamma_{ij}},\ \forall\,i<j$
and 
$V=\tilde{V}:=\Pi_{0,V_H}(\beta^i{\bf u}_h^i\cdot\,{\bf \check{n}}^i - \beta^j{\bf u}_h^j\cdot\,{\bf \check{n}}^j)|_{\Gamma_{ij}},\ \forall\,i<j$, in which $\Pi_{0,M_H}$ and $\Pi_{0,V_H}$ are the L$^2$-projection operators
onto $M_H$ and $V_H$, respectively. Adding over all subdomains one gets
\begin{eqnarray}
\sum_{i=1}^m \left [ (K^{-1}{\bf u}_h^i,{\bf u}_h^i)_{\Omega_i} + \|\sqrt{\beta_i}\,{\bf u}_h^i\cdot\check{\bf n}^i\|^2_{\Gamma_i}\right ] &=& 0, \label{eq1_proof}\\
\sum_{i<j} (\tilde{M},\tilde{M})_{\Gamma_{ij}} &=& 0,  \label{eq2_proof}\\
\sum_{i<j}(\tilde{V},\tilde{V})_{\Gamma_{ij}} &=& 0.  \label{eq3_proof}
\end{eqnarray}
The derivation involves rewriting equations \eqref{eq13d}-\eqref{eq14d} as a sum of jumps over interfaces $\Gamma_{ij}$ for all $i<j$, and incorporating the orthogonality of the projections, resulting in equations \eqref{eq2_proof}-\eqref{eq3_proof}. Adding up all the resulting equations one concludes that ${\bf u}_h^i = 0$ for $i=1,\dots,m$.
Thus, we have $\sum_{i=1}^m (p_h^i,\nabla\cdot {\bf v})_{\Omega_i} = 0,\,\forall\,{\bf v} \in {\bf V}_{h0}^i$. 
From the stability of the local Raviart-Thomas spaces, we conclude that $p_h^i = 0$ for $i=1,\dots,m$. 
 
\end{proof}

\begin{rmk}
Notice that $\tilde{M}$ is the projection of velocity jumps on $\Gamma$ onto $M_H$ 
and $\tilde{V}$ is the projection of pressure jumps (with $\lambda_h^i = 0$) on $\Gamma$ onto $V_H$.
\end{rmk}

Indeed, formulation \eqref{eq11d}-\eqref{eq14d} can be characterized as a domain decomposition method, as outlined in the subsequent proposition.

\begin{prop}
Let $({\bf u}_h^i,p_h^i,\lambda_h^i)$ be the solution in  $\Pi_{i=1}^m{\bf V}_{hz}^i\times Q_h^i
\times \Lambda^i_{H}$ of the discrete formulation (\ref{eq11d})--(\ref{eq14d}) with $M_H\times\,V_H = F_h(\mathcal{E}_h)\times\,F_h(\mathcal{E}_h)$ and $\Lambda^i_H = \mbox{span}\left\{\phi_i^1,\phi_i^2,..,\phi_i^N\right\} = F_h(\mathcal{E}_h\cap\Gamma_i)$, for $i=1,\dots,m$, and let 
$({\bf u}_h,p_h)\,\in\,({\bf V}_{hz}\cap H(\mbox{div},\Omega))\times Q_h$ be the solution of the non-decomposed
discrete problem which satisfies
\begin{eqnarray}
  (K^{-1}{\bf u}_h,{\bf v})_{\Omega}-(p_h,\nabla\cdot {\bf v})_{\Omega} 
  &=&-(g,{\bf v}\cdot\check{\bf n})_{\partial\Omega_p}~, \label{eqglo1}\\
  (q,\nabla\cdot {\bf u}_h)_{\Omega}&=& (f,q)_{\Omega}~, \label{eqglo2}
\end{eqnarray}
for all ${\bf v}\,\in\,{\bf V}_{h0}\cap H(\mbox{div},\Omega)$ and all
$q\,\in\,Q_h$.

Then, assuming $\beta_i$ to be
constant on each edge of $\mathcal{E}_h\cap\partial\Omega_i$,
\begin{eqnarray}
  {\bf u}_h^i&=&{\bf u}_h|_{\Omega_i},\label{eqprop1a}\\
  p_h^i&=&p_h|_{\Omega_i},\label{eqprop1b}
\end{eqnarray}
for each $i=1,\dots,m.$
\label{prop2}
\end{prop}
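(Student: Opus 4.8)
The plan is to exhibit the restriction of the global solution as \emph{a} solution of the decomposed formulation \eqref{eq11d}--\eqref{eq14d}, and then to invoke the preceding uniqueness proposition to conclude that it is \emph{the} solution. First I would check that the hypotheses of that proposition are met for the present choice of spaces: condition~1 is immediate since $M_H = V_H = F_h(\mathcal{E}_h)$, and condition~2 follows from a face-counting argument, because $\dim F_h(\mathcal{E}_h) = |\mathcal{E}_h|$ while each interior face of $\mathcal{E}_h$ lies in exactly two subdomains, so $\sum_{i=1}^m \dim F_h(\mathcal{E}_h\cap\Gamma_i) = 2|\mathcal{E}_h| = \dim M_H + \dim V_H$. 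Thus the decomposed problem is square with a unique solution, and it suffices to produce one.

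The key construction is the reconstruction of the interface Lagrange multiplier. Setting ${\bf u}_h^i := {\bf u}_h|_{\Omega_i}$ and $p_h^i := p_h|_{\Omega_i}$, I would introduce the combined trace $\mu_h^i := \beta_i\,{\bf u}_h^i\cdot\check{\bf n}^i + \lambda_h^i$, which is exactly the quantity appearing in \eqref{eq11d} and \eqref{eq14d}, and seek $\mu_h^i$ so that \eqref{eq11d} holds. To this end, consider on ${\bf V}_{h0}^i$ the local residual functional $\ell_i({\bf v}) := (K^{-1}{\bf u}_h,{\bf v})_{\Omega_i} - (p_h,\nabla\cdot{\bf v})_{\Omega_i} + (g,{\bf v}\cdot\check{\bf n}^i)_{\partial\Omega_i\cap\partial\Omega_p}$. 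Any ${\bf v}\in{\bf V}_{h0}^i$ with vanishing normal trace on $\Gamma_i$ extends by zero to an element of ${\bf V}_{h0}\cap H(\mbox{div},\Omega)$, so \eqref{eqglo1} forces $\ell_i({\bf v})=0$ for such ${\bf v}$; hence $\ell_i$ factors through the normal trace ${\bf v}\cdot\check{\bf n}^i|_{\Gamma_i}$. Since the normal-trace map from ${\bf V}_{h0}^i$ onto $F_h(\mathcal{E}_h\cap\Gamma_i)$ is surjective for the lowest-order Raviart--Thomas space, Riesz representation on the finite-dimensional space $F_h(\mathcal{E}_h\cap\Gamma_i)$ yields a unique $\mu_h^i\in F_h(\mathcal{E}_h\cap\Gamma_i)$ with $(\mu_h^i,{\bf v}\cdot\check{\bf n}^i)_{\Gamma_i} = -\ell_i({\bf v})$, which is precisely \eqref{eq11d}. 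I would then \emph{define} $\lambda_h^i := \mu_h^i - \beta_i\,{\bf u}_h^i\cdot\check{\bf n}^i$; here the hypothesis that $\beta_i$ is constant on each edge is essential, because ${\bf u}_h^i\cdot\check{\bf n}^i$ is already piecewise constant per edge, whence $\beta_i\,{\bf u}_h^i\cdot\check{\bf n}^i\in F_h$ and therefore $\lambda_h^i\in F_h(\mathcal{E}_h\cap\Gamma_i)=\Lambda^i_H$, as required.

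With the candidate in hand, the remaining equations are checked in turn. Equation \eqref{eq11d} holds by the construction of $\mu_h^i$; \eqref{eq12d} follows by restricting \eqref{eqglo2} to $\Omega_i$ and using that $Q_h^i$ consists of the local piecewise constants; and \eqref{eq13d} holds because ${\bf u}_h\in H(\mbox{div},\Omega)$ has continuous normal component across every interface, so ${\bf u}_h^i\cdot\check{\bf n}^i + {\bf u}_h^j\cdot\check{\bf n}^j = 0$ on each $\Gamma_{ij}$. The one nonautomatic point, and what I expect to be the crux, is \eqref{eq14d}, which amounts to single-valuedness of the reconstructed trace, $\mu_h^i = \mu_h^j$ on each $\Gamma_{ij}$. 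To establish this I would test with ${\bf v}$ supported in $\Omega_i\cup\Omega_j$ having matching (continuous) normal trace across $\Gamma_{ij}$ and vanishing normal trace on the rest of $\Gamma$, so that its two pieces glue into an element of ${\bf V}_{h0}\cap H(\mbox{div},\Omega)$: then $\ell_i({\bf v}) + \ell_j({\bf v}) = 0$ by \eqref{eqglo1}, while by the defining property of the multipliers this sum equals $(\mu_h^j - \mu_h^i,\,{\bf v}\cdot\check{\bf n})_{\Gamma_{ij}}$, where I have used $\check{\bf n}^i=\check{\bf n}=-\check{\bf n}^j$ on $\Gamma_{ij}$ for $i<j$. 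Letting ${\bf v}\cdot\check{\bf n}$ range over $F_h(\mathcal{E}_h\cap\Gamma_{ij})$ forces $\mu_h^i=\mu_h^j$, and summing the signed contributions $\check{\bf n}^i\cdot\check{\bf n}=\pm1$ on the two sides of each interface then makes \eqref{eq14d} vanish. Having verified \eqref{eq11d}--\eqref{eq14d}, the uniqueness proposition delivers \eqref{eqprop1a}--\eqref{eqprop1b}.
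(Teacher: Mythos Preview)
Your argument is correct and complete: the dimension count for Proposition~1, the reconstruction of the interface multiplier $\mu_h^i$ via the factored residual functional, the use of the edgewise-constancy of $\beta_i$ to ensure $\lambda_h^i\in F_h(\mathcal{E}_h\cap\Gamma_i)$, and the matching $\mu_h^i=\mu_h^j$ across $\Gamma_{ij}$ all go through as you describe.

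The paper, however, does not reproduce any of this. Its proof consists of a single observation: once one assumes $\Lambda^i_H = F_h(\mathcal{E}_h\cap\Gamma_i)$, the oversampling construction becomes irrelevant and formulation \eqref{eq11d}--\eqref{eq14d} coincides with the Two-Lagrange-Multiplier form of the original MRCM in \cite{guiraldello2018multiscale}, whose Proposition~2 (together with Remark~2 there) already establishes the equivalence with the global discrete problem. So the paper simply cites that earlier result. Your route is therefore genuinely different in presentation---a direct, self-contained verification rather than a reduction to a known case---though the underlying mechanism (recovering the interface pressure trace and invoking uniqueness of the square system) is presumably what the cited reference does as well. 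The advantage of your approach is that it makes the role of the hypothesis on $\beta_i$ and of the full fine-scale choice $M_H=V_H=F_h(\mathcal{E}_h)$ explicit within the present paper; the paper's approach is shorter but opaque to a reader without access to \cite{guiraldello2018multiscale}.
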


\begin{proof}
The proof is given in \cite{guiraldello2018multiscale} (see {\bf Proposition 2.} together {\bf Remark 2.}) 
with the fact that $\Lambda^i_H = \mbox{span}\left\{\phi_i^1,\phi_i^2,..,\phi_i^N\right\} = F_h(\mathcal{E}_h\cap\Gamma_i)$.   
\end{proof}

\begin{rmk}
For \textbf{Proposition 2} to hold, the number $N$ of basis functions should match the count of edges/faces on $\mathcal{E}_h \cap \Gamma_i$, for $i=1,\dots,m$. 
\end{rmk}

\begin{figure}[H]
    \centering
    \includegraphics[width = 0.4\textwidth]{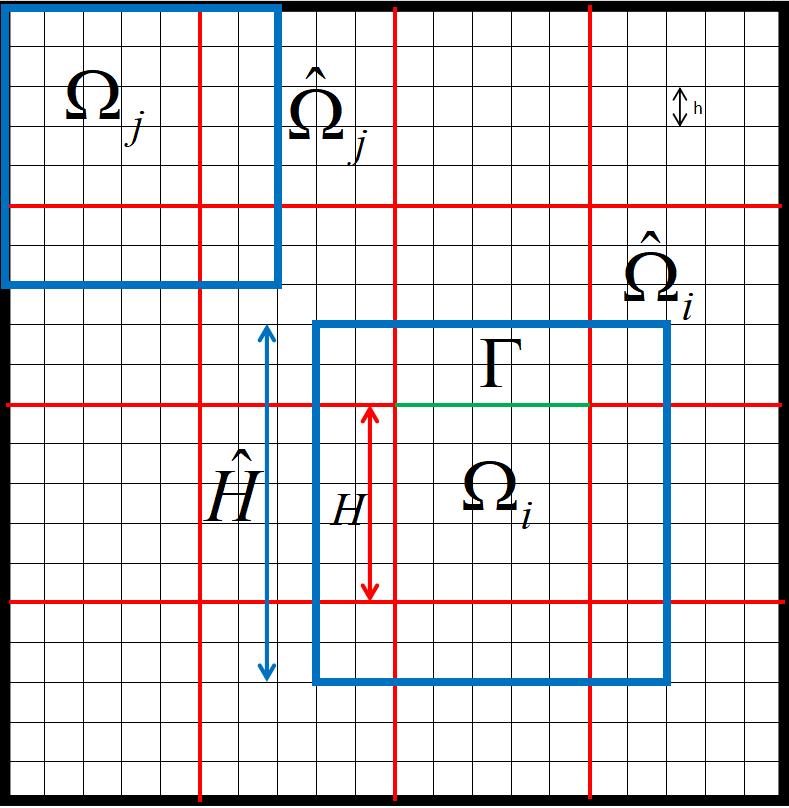}
    \caption{Decompositions of the computational domain $\Omega$: The non-overlapping subdomains are denoted by $\Omega_i$, and the corresponding oversampling regions are written as $\hat{\Omega}_i$. The three length scales that enter in the formulation of the MRCM with oversampling are also illustrated:  $\hat H > H \geq h$.}
    \label{overlapjpg}
\end{figure}

\subsection{Multiscale basis functions (MBFs)}
To describe an efficient method to solve the system (\ref{eq11d})--(\ref{eq14d}) possibly taking advantage of multiple cores we will introduce the notion of multiscale basis functions.
Assume at this stage that the Robin boundary conditions $\lambda_h^i\,\in\,{\Lambda^i_{\hat H}}$, for $i=1,\ldots,m$, are known.
This way we can identify a family of local boundary value problems on $\Omega_i$, $i=1,\ldots,m$, given by:
Find $({\bf u}_h^i,p_h^i)\,\in\,({\bf V}_{hz}^i\times Q_h^i)$, such that
\begin{eqnarray}
(K^{-1}{\bf u}_h^i,{\bf v})_{\Omega_i}-(p_h^i,\nabla\cdot {\bf v})_{\Omega_i} 
+(\beta_i\,{\bf u}_h^i\cdot\check{\bf n}^i,{\bf v}\cdot\check{\bf n}^i)_{\Gamma_i} \nonumber \\ = 
- (\lambda_h^i,{\bf v}\cdot\check{\bf n}^i)_{\Gamma_i}
-(g,{\bf v}\cdot\check{\bf n}^i)_{\partial\Omega_i\cap\partial\Omega_p},  \label{eq111d}\\
(q,\nabla\cdot {\bf u}_h^i)_{\Omega_i}  =   (f,q)_{\Omega_i}, \label{eq122d}
\end{eqnarray}
hold for all $({\bf v},q)\,\in\,{\bf V}_{h0}^i$.

Next we split the solution $({\bf u}_h^i,p_h^i)$ as
\begin{eqnarray}
 {\bf u}^i_h & = & \tilde{\bf u}^i_h + \bar{\bf u}^i_h, \label{eq:addu}\\
 p^i_h & = & \tilde{p}^i_h + \bar{p}^i_h, \label{eq:addp}
\end{eqnarray}
such that 
$(\bar{\bf u}_h,\bar{p}_h) \in {\bf V}_{hz}^i\times Q_h^i$ satisfies the subdomain problems with 
trivial Robin boundary conditions and nonzero forcing terms, that is
\begin{eqnarray}
(K^{-1}\bar{\bf u}_h^i,{\bf v})_{\Omega_i}-(\bar{p}_h^i,\nabla\cdot {\bf v})_{\Omega_i} 
+(\beta_i\,\bar{\bf u}_h^i\cdot\check{\bf n}^i,{\bf v}\cdot\check{\bf n}^i)_{\Gamma_i} \nonumber \\ = 
-(g,{\bf v}\cdot\check{\bf n}^i)_{\partial\Omega_i\cap\partial\Omega_p},  \label{eq111dbar}\\
(q,\nabla\cdot \bar {\bf u}_h^i)_{\Omega_i}  =   (f,q)_{\Omega_i}, \label{eq122dbar}
\end{eqnarray}
hold for all $({\bf v},q)\,\in\,{\bf V}_{h0}^i$.
Moreover,
$(\tilde{\bf u}^i_h,\tilde{p}^i_h) \in {\bf V}_{h0}^i\times Q_h^i$ satisfies local problems
with forcing terms $f$ and $g$ set to zero
\begin{eqnarray}
(K^{-1}\tilde{\bf u}_h^i,{\bf v})_{\Omega_i}-(\tilde{p}_h^i,\nabla\cdot {\bf v})_{\Omega_i} 
+(\beta_i\,\tilde {\bf u}_h^i\cdot\check{\bf n}^i,{\bf v}\cdot\check{\bf n}^i)_{\Gamma_i} \nonumber \\ = 
- (\lambda_h^i,{\bf v}\cdot\check{\bf n}^i)_{\Gamma_i},  \label{eq111dtilde}\\
(q,\nabla\cdot \tilde{\bf u}_h^i)_{\Omega_i}  =   0, \label{eq122dtilde}
\end{eqnarray}
hold for all $({\bf v},q)\,\in\,{\bf V}_{h0}^i$.

The local boundary value problems above, like Eqs. (\ref{eq111dbar})--(\ref{eq122dbar}) 
or Eqs. (\ref{eq111dtilde})--(\ref{eq122dtilde}),
can be solved using standard discrete spaces for
${\bf V}_{h0}^i$ and $Q_h^i$.
In this work, we consider two-dimensional problems
along with the lowest order Raviart-Thomas space $\mbox{RT}_0$ on 
quadrilateral cartesian grids of uniform cell size $h$ (see Fig. \ref{overlapjpg}).
The choice of this space and a combination of the midpoint and the 
trapezoidal rules for numerical integration produce a
discrete linear system which only involves pressure unknowns and
that is equivalent to a cell--centered finite difference method
\cite{douglas1997numerical}.

Considering Eqs. (\ref{eq13d})-(\ref{eq14d}) that impose weak
flux and pressure continuity on $\Gamma$ on the coarse scale $H$, in principle
we may simultaneously solve for all unknowns and subdomains in the system
(\ref{eq11d})-(\ref{eq14d}). Our goal next is to eliminate the internal subdomain degrees of freedom,
thus solving a relatively small linear system associated with the interface $\Gamma$. 
Multiscale basis functions that are constructed independently on
subdomains play an essential role in such reduction of the problem size.

First of all let us denote by $\{ \phi^j \}_{1 \le j \le n_i}$ a basis for the coarse space  $\Lambda^i_{\hat H}$, $i=1,\ldots,m$. 
For instance, in the case of piecewise constant space $\Lambda^i_{\hat H}$, for subdomains not touching the exterior boundary $n = 4$
(see Fig \ref{overlapjpg}). 
The multiscale basis functions are defined to be solutions to local problems in $\hat\Omega_i$ as follows. For
each $\Lambda^i_{\hat H}$, $i=1,\ldots,m$ let $(\boldsymbol{\Phi}^i_{j}, \Psi^i_{j})$ be the solution to Eqs.
(\ref{eq111dtilde})-(\ref{eq122dtilde}) with Robin boundary data set to be $\phi^j$, $j=1,\ldots,n_i$. 
Next, express the local solution on each subdomain as:
\begin{equation}
\tilde{\bf u}_h^i = \sum_{j=1}^{n_i}{X_{j}^i \boldsymbol{\Phi}^i_{j}},~~
\tilde{p}_h^i = \sum_{j=1}^{n_i}{X_{j}^i \Psi^i_{j}}.  \label{eq:uhatphat}
\end{equation}
Taking into account Eqs. (\ref{eq:addu})-(\ref{eq:addp})
along with Eqs.  (\ref{eq13d})-(\ref{eq14d}) on the $H$ scale, we have
\begin{eqnarray}
\sum_{i=1}^m{\left ( \tilde{\bf u}_h^i \cdot \check{\bf n}^i , M \right )}_{\Gamma_i} & = &
- \sum_{i=1}^m{\left ( \bar{\bf u}_h^i \cdot \check{\bf n}^i , M \right )}_{\Gamma_i}\,,  \label{eq:int_ujump} \\
 \sum_{i=1}^m (\beta_i\,\tilde{\bf u}_h^i\cdot \check{\bf n}^i +\lambda_h^i\,,
 V \check{\bf n}^i\cdot\check{\bf n})_{\Gamma_i} 
  & = & -\sum_{i=1}^m (\beta_i\,\bar{\bf u}_h^i\cdot \check{\bf n}^i,
 V \check{\bf n}^i\cdot\check{\bf n})_{\Gamma_i}.\label{eq:int_pjump}
\end{eqnarray}
Next, substitute Eq. (\ref{eq:uhatphat}) on the above equations and test with all basis
functions $(M,V)\,\in\, M_H \times V_H$
to construct
a global linear system for the unknowns $X_{j}^i$, $i=1,\ldots,m$, and $j=1,\ldots n_1.$ 
For additional details concerning the construction of this linear system, we refer the reader
to \cite{guiraldello2018multiscale}.

\begin{rmk}
In practice, solutions to Eqs. (\ref{eq111dtilde})-(\ref{eq122dtilde}) with $\phi^j$, $j=1,\ldots,n_i$, have already been computed during the construction of functions $\phi^j$ for the informed Lagrange space in Eq. (\ref{informed_space}), thereby leaving only Eqs. (\ref{eq111dbar})-(\ref{eq122dbar}) to be solved.   
\end{rmk}


\section{The Smoothing Steps}

We will define the smoothing steps through local updates of a solution for Eqs. (\ref{eq1a} - \ref{eq1d}) that has been obtained by the MRCM with oversampling. Let us refer to it as $({\bf u}_h^{i,0},p_h^{i,0})$, $i=1,\ldots,m$, where the superscript $0$ indicates that this solution is the first one of a sequence that will follow.
The steps that produces the solution $({\bf u}_h^{i,k},p_h^{i,k})$, $i=1,\ldots,m$, $k=1,\ldots,N_s$ can be described as follows:
\begin{enumerate}
\item Set a coloring scheme for oversampling regions within the subdomain partitioning process. The aim is to assign a distinct color to each subdomain to ensure that subdomains sharing the same color do not have any common boundary points. This approach is illustrated in Fig.(\ref{colored}) for a two-dimensional partition (see \cite{DomainD}).
\item For each color, loop of all corresponding oversampling regions.
\item For all oversampling regions with the same color set the Robin boundary conditions as $\lambda_h^{i, k-1} = -\beta_i\,{\bf u}_h^{i,k-1}\cdot {\bf \check{n}}^i|_{\partial\hat\Omega_i\setminus\partial\Omega} + \pi^{i,k-1}|_{\partial\hat\Omega_i\setminus\partial\Omega}$ on $\partial\hat\Omega_i\setminus\partial\Omega$ (see \cite{douglas}) in terms of 
existing $({\bf u}_h^{i,k-1},p_h^{i,k-1})$, $i=1,\ldots,m$ with $\alpha = 1$ (other values for $\alpha$ can be used but our numerical studies indicate that this is a good choice).
\item For all oversampling regions with the same color simultaneously find $(\hat {\bf u}_h^{i},\hat p_h^{i})$ by solving
\begin{eqnarray}
(K^{-1}\hat {\bf u}_h^{i},{\bf v})_{\hat\Omega_i}-(\hat p_h^i,\nabla\cdot {\bf v})_{\hat\Omega_i} 
+(\beta_i\,\hat {\bf u}_h^i\cdot\check{\bf n}^i,{\bf v}\cdot\check{\bf n}^i)_{\partial\hat\Omega_i\setminus\partial\Omega} \nonumber \\ = 
- (\lambda_h^{i,k-1},{\bf v}\cdot\check{\bf n}^i)_{\partial\hat\Omega_i\setminus\partial\Omega}
-(g,{\bf v}\cdot\check{\bf n}^i)_{\partial\hat\Omega_i\cap\partial\Omega_p},  \label{eq:sm_1}\\
(q,\nabla\cdot \hat {\bf u}_h^i)_{\hat\Omega_i}  =   (f,q)_{\hat\Omega_i}, \label{eq:sm_2}
\end{eqnarray}
hold for all $({\bf v},q)\,\in\,{\bf V}_{h0}^i$.
\item For all oversampling regions with the same color update $({\bf u}_h^{i,k-1},p_h^{i,k-1})$ in $\Omega_i$ to be the restriction of 
$(\hat {\bf u}_h^{i},\hat p_h^{i})$ computed above to $\Omega_i$. 
\item After the loop over all colors is completed set $({\bf u}_h^{i,k},p_h^{i,k})$ to be the existing $({\bf u}_h^{i,k-1},p_h^{i,k-1})$, $i=1,\ldots,m$.
\item Repeat the above steps $N_s$ times.
\end{enumerate}


The novel method introduced in this work, comprising MRCM-O followed by smoothing steps, is referred to as the Multiscale Robin Coupled Method with Oversampling and Smoothing (MRCM-OS).

\begin{figure}[H]
	\begin{minipage}{1.0\textwidth}
		\centering
		\includegraphics[width = 0.243\textwidth]{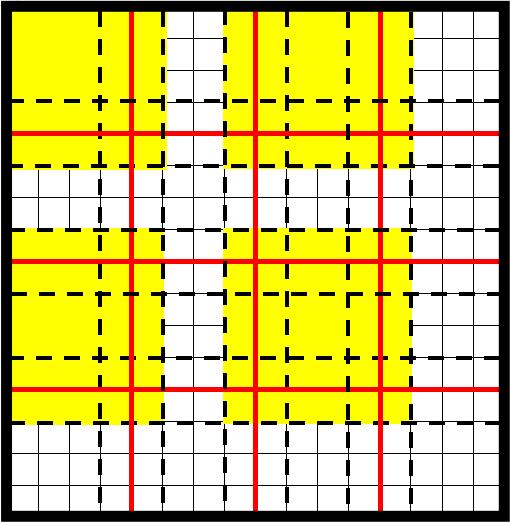}
		\includegraphics[width = 0.243\textwidth]{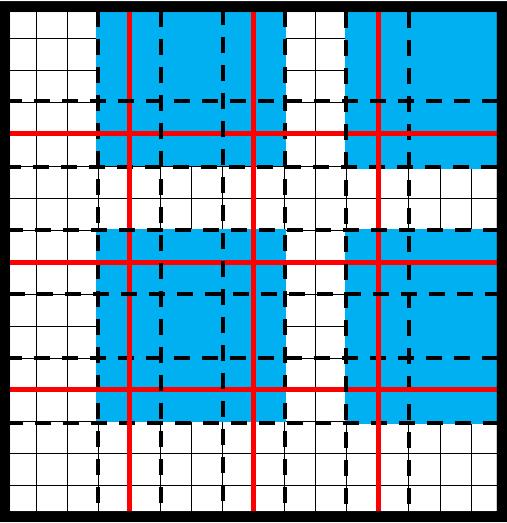}
		\includegraphics[width = 0.243\textwidth]{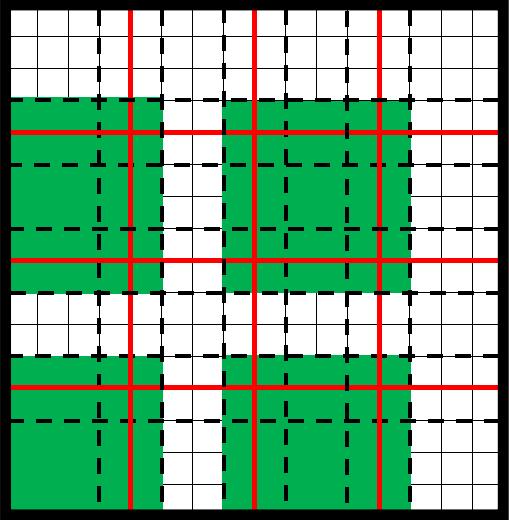}
		\includegraphics[width = 0.243\textwidth]{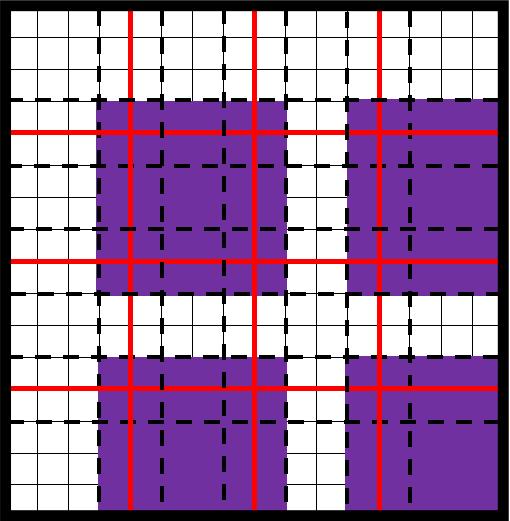}
	\end{minipage}
	\caption {Partitions: non-overlapping (solid line) and oversampling (dotted line). In the coloring scheme oversampling subdomains  sharing the same color do not have any common boundary points.}
	\label{colored}
\end{figure}

\section{Computational Strategy}
This section provides the numerical strategy employed within our in-house parallel code developed in C/MPI. Algorithm \eqref{alg:numerical_strategy} provides a pseudo-code describing the adopted strategy.

\begin{algorithm}
  \caption{Algorithmic representation of the adopted numerical strategy.}
  \label{alg:numerical_strategy}
  \begin{algorithmic}[1]
    \State \textbf{Input: }$\text{Domain\_decomposition}, \text{oversampling\_size}$
    \State \textbf{Step 1} Compute and store LDL$^T$-factorization for problem \eqref{eq:oversampling_problem}
    \State \textbf{Step 2} Solve N problems of \eqref{eq:oversampling_problem} using Step 1's factorization
    \State \textbf{Step 3} Assemble lhs of eqs. \eqref{eq:int_ujump}-\eqref{eq:int_pjump} using Step 2's solutions
    \State \textbf{Step 4} Solve problems \eqref{eq111dbar}-\eqref{eq122dbar} and assemble rhs of eqs. \eqref{eq:int_ujump}-\eqref{eq:int_pjump}
    \State \textbf{Step 5} Solve interface problem and assemble MRCM-OS solution
    \State \textbf{Step 6} Compute and store LDL$^T$-factorization for problem \eqref{eq:sm_1}-\eqref{eq:sm_2}
    with $\alpha=1$ for the Robin parameter $\beta_i$
    \State \textbf{Step 7} Compute smoothing using Step 6's factorization
  \end{algorithmic}
\end{algorithm}

Efficiency within our strategy is primarily achieved through the reuse of LDL$^T$ factorizations. Here, we compute the LDL$^T$-factorization to solve linear problems of the form $Ax=b$ posed on subdomain $\hat\Omega_i$. It is worth noting that while solving local problems within each subdomain, the matrix $A$ remains constant, with variations occurring solely in vector $b$. As a result, we can compute the LDL$^T$-factorization of $A$ once and reuse it across all $Ax=b$ calculations.
This approach dramatically reduces the computational cost associated with computing the MBFs in \textbf{Step 2} taking advantage of \textbf{Step 1} and the subsequent smoothing step in \textbf{Step 7} taking advantage of \textbf{Step 6}.
As previously mentioned, our numerical experiments (not explored in this manuscript) indicate that the choice of $\alpha = 1$ for the smoothing
leads to improved accuracy with a reduced number of iterations. Additionally, the subsequent numerical investigations in the following section indicate that the smallest errors occur when $\alpha$ is approximately 1. 
In practice, choosing $\alpha \sim 1$ for MBFs and smoothing can eliminate the need to compute \textbf{Step 6}, allowing the use of the factorization obtained in \textbf{Step 1} to compute \textbf{Step 7}. This approach enhances accuracy and reduces computational cost.
 It is noteworthy that the majority of steps are computed concurrently within each MPI-process, underscoring the parallel nature of our approach. The sole exception arises during the computation of the solution to the interface problem in \textbf{Step 5}.

\section{Numerical Studies}

The simulations discussed were executed in an HPC cluster. We discuss the numerical solutions of two model problems: one having a constant permeability coefficient $K\left(\bf x\right)$  with an analytical solution and the other one encompassing a very heterogeneous permeability field from the SPE10 project (http://www.spe.org/web/csp).

For the problem with constant $K\left(\bf x\right)$, we analyze the system of Eq. (\ref{eq1a})-(\ref{eq1d}) subject to trivial Newmann boundary conditions. 
The permeability field is set to $K\left(\bf x\right) = 1$. The source term $q$ is written as  $q=8\pi^2 cos(2\pi x)cos(2\pi y)$. The analytical solution for 
this problem is given by $p=cos(2\pi x)cos(2\pi y)$. We are going to discuss several partitions of the domain $[0,1]\times[0,1]$. 
In all these partitions, each subdomain will be discretized by a computational local grid of size $20 \times 20$.

For the problem with a heterogeneous permeability field, we consider a rectangular domain and prescribe Dirichlet-type boundary conditions by fixing the pressure to 1 (0) 
on the left (right) boundary. We set Neumann-type conditions to zero at the top and bottom boundaries. The source term is set as $q(x)=0$. Additionally, 
the permeability field $K\left(\bf x\right)$ is set to be the 40th (two-dimensional) layer of the SPE10 project, as depicted in Fig.(\ref{PERM}). 
This particular field poses a considerable challenge for numerical solvers due to its variability and the presence of a highly permeable channel.
For all reported results in heterogeneous problems, the domain is decomposed into $11 \times 3$ subdomains with $H=1/3$, each containing a local $20 \times 20$ grid.

We are going to refer to the two problems described above as homogeneous and heterogenous, respectively.
\begin{figure}[h!]
\vspace{1cm}
    \centering
    \includegraphics[width = 1.0\textwidth]{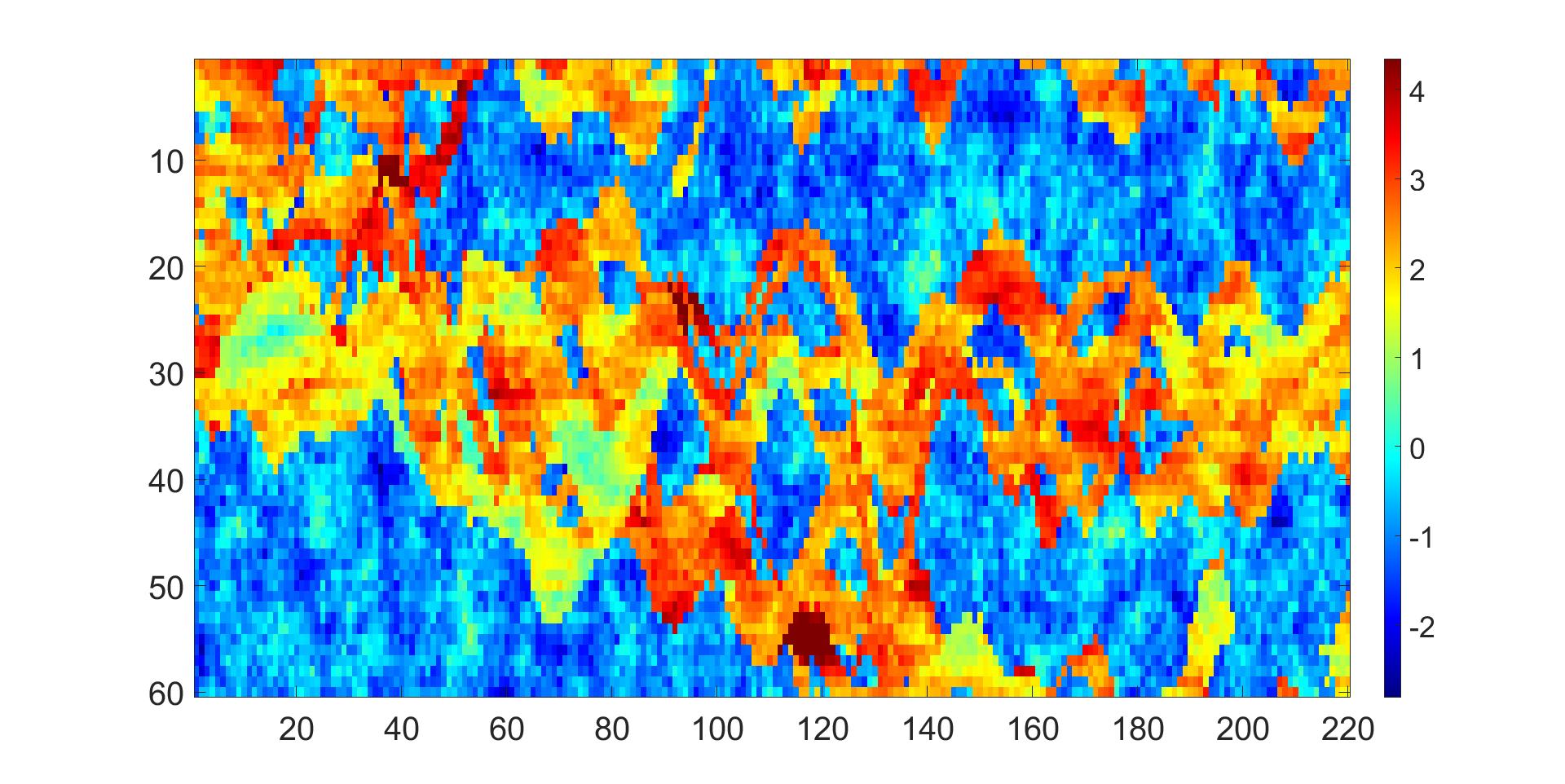}
    \caption{Permeability field: Slice 40 of the SPE 10 project.}
    \label{PERM}
\end{figure}
Table \ref{tab:notation} summarizes the notation to be considered in the numerical experiments. We remark that the original MRCM is equipped with linear polynomial interface spaces in all studies reported here. 

\begin{table}[htbp]
  \centering
  \begin{tabular}{|l|l|}
    \hline
    \textbf{Notation} & \textbf{Description} \\
    \hline
    $M \times M$ & Count of subdomains in non-overlapping partition \\
    \textit{MRCM} & Original Multiscale Robin Coupled Method \\
    \textit{OC} & MRCM-OS with piecewise constant interface spaces for MBFs \\
    \textit{OL} & MRCM-OS with piecewise linear interface spaces for MBFs \\
    \textit{$O\_\,-l$} & Solution with oversampling size $lh$ \\
    \textit{$O\_\,-l,kS$} & Solution with oversampling size $lh$, followed by $k$ smoothing steps \\
    \hline
  \end{tabular}
  \caption{Notation used in numerical experiments.}
  \label{tab:notation}
\end{table}

Section 4.1 provides numerical results for both the heterogeneous and homogeneous problems, illustrating some of the advantageous aspects 
of the proposed MRCM-OS method. First we illustrate
our method's improved accuracy followed by a study showing that MRCM-OS employing only piecewise constant spaces is comparable to the original MRCM with linear polynomial spaces, although with lower computational cost. We emphasize that, in some results of the homogeneous problem, our method utilizing linear polynomial spaces demonstrates the capability to match or even surpass the accuracy of the fine grid solution. 
Section 4.2 explores the advantages brought by oversampling alone. In Section 4.3, we focus in a detailed analysis of the error reduction achieved by 
our method employing both oversampling and smoothing techniques.

\subsection{The role of oversampling and smoothing}

Our initial findings are presented in Fig. (\ref{hete-OL4-norm}), which pertains to the heterogeneous problem. The relative error is computed in the $L^2(\Omega)$ norm, comparing pressure and flux variables to the fine grid solution for $\alpha = 10^{-8}, 10^{-7}, ...,10^7, 10^8$. This study illustrates the improvements in accuracy  achieved through our proposed method. As illustrated in Fig. (\ref{hete-OL4-norm}), a remarkable enhancement by two orders of magnitude is observed in the flux error when comparing MRCM-OS with an oversampling size of $4h$ and 4 smoothing steps to MRCM. 

\begin{figure}[H]
	\centering
	\begin{minipage}{0.49\textwidth}
		\includegraphics[width = 1.0\textwidth]{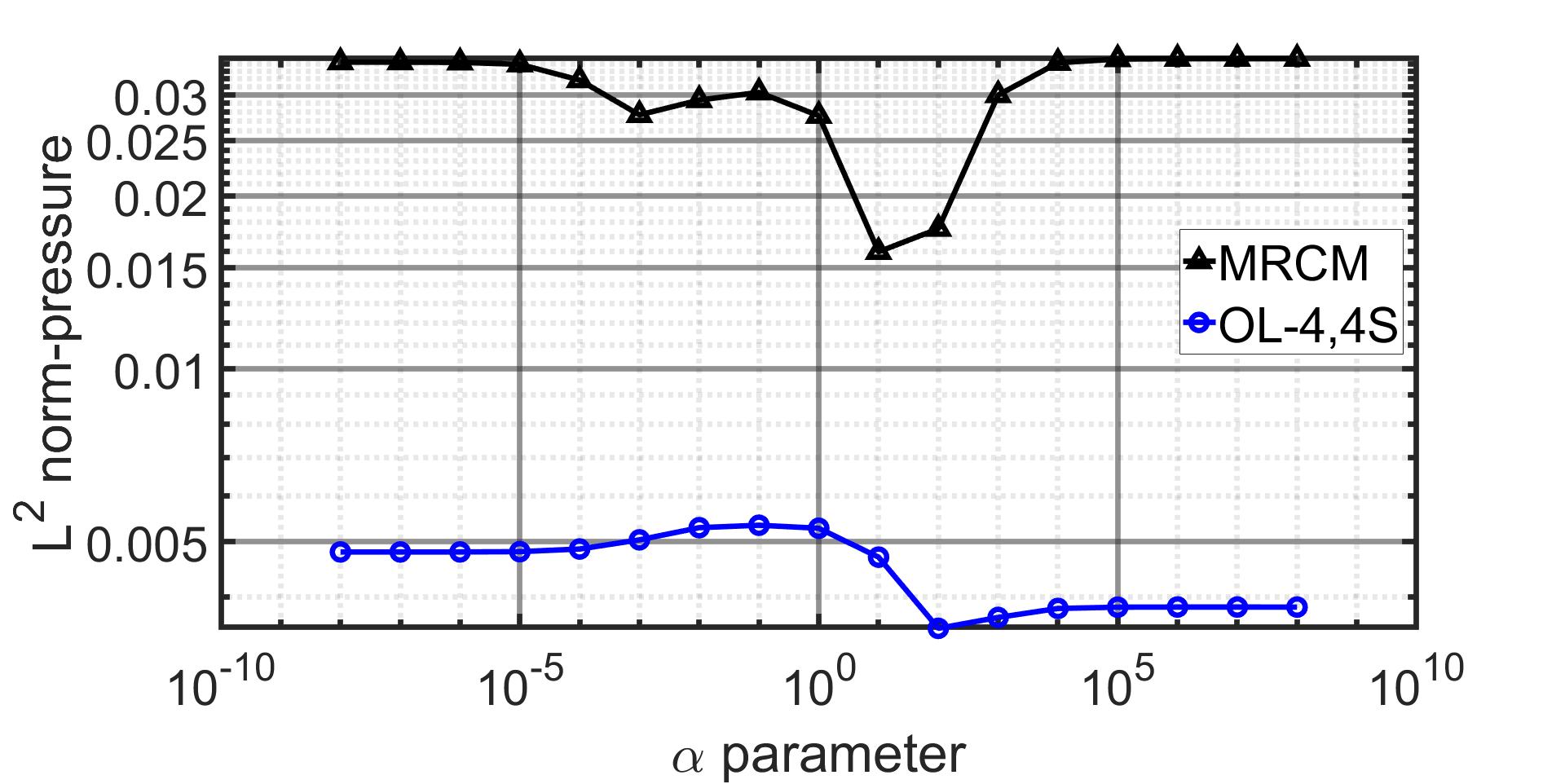}
	\end{minipage}
	\begin{minipage}{0.49\textwidth}
		\includegraphics[width = 1.0\textwidth]{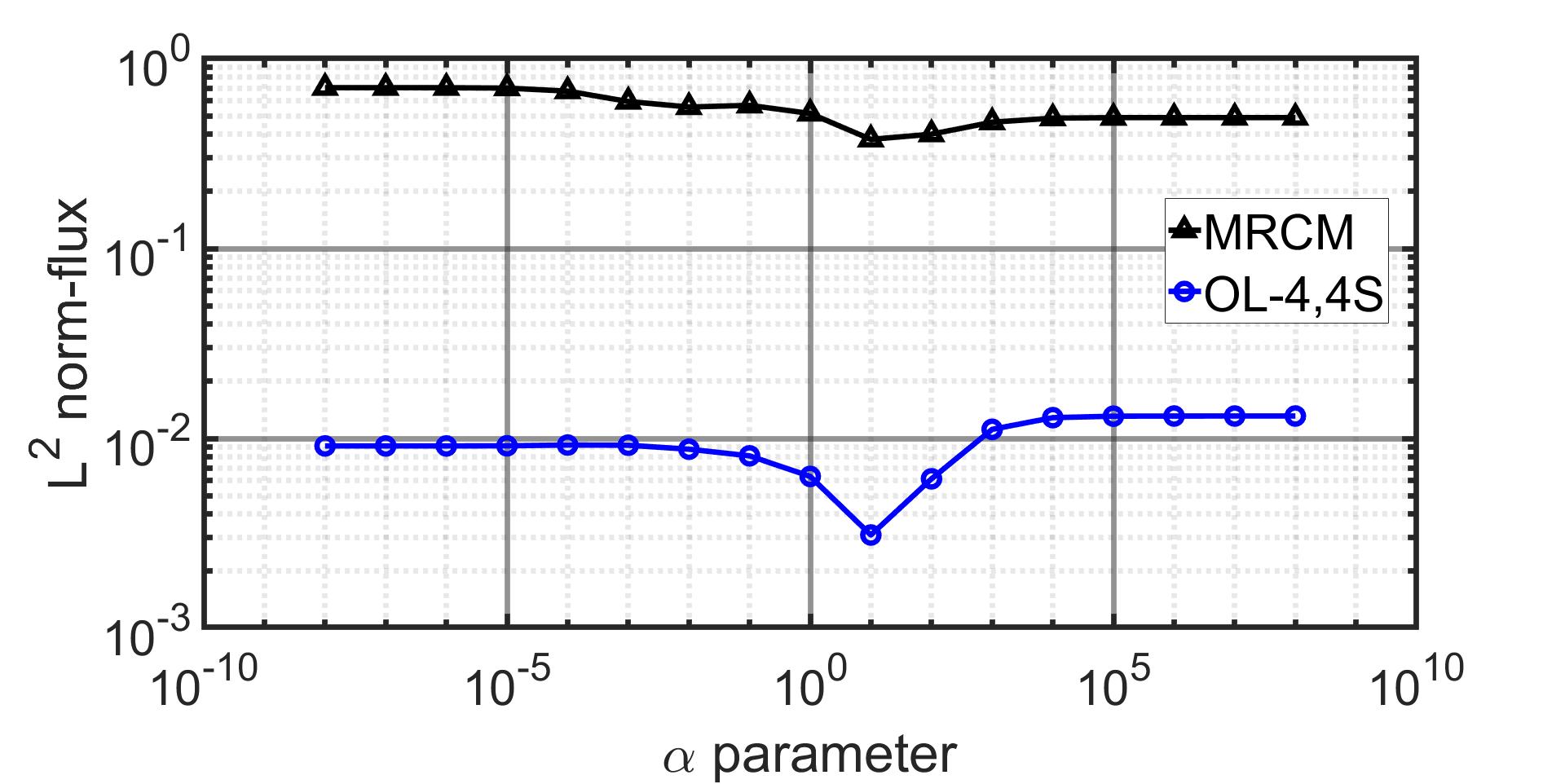}
	\end{minipage}
	\caption{$\alpha$ parameter study for the heterogeneous problem demonstrating significant improvement: Pressure relative error (left) and flux relative error (right).}
	\label{hete-OL4-norm}
\end{figure}

To better display the differences between the solutions generated by MRCM and MRCM-OS we provide additional results for the case $\alpha=1$.  
In Fig. (\ref{hete-OL4-pic}) we plot the fluxes
computed with both methods in the left column where colors refer to the magnitude of the velocity field.
The right column contains two results. The top images display the pressure fields, illustrating that our method yields a more continuous pressure field. The bottom images show the flux jump along the highlighted line in the pictures on the left column. Clearly, the jump is significantly reduced in the case of MRCM-OS, indicating its ability to mitigate flux discontinuity along the interface.

\begin{figure}[H]
	\centering
	\begin{minipage}{1\textwidth}

		\includegraphics[width = 1.0\textwidth]{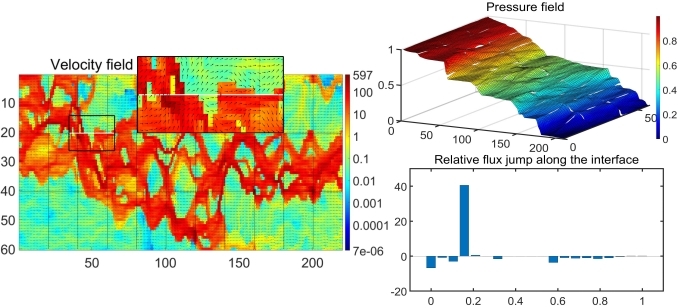}
	\end{minipage}
	
	\begin{minipage}{1\textwidth}\vspace{1cm}
		\includegraphics[width = 1.0\textwidth]{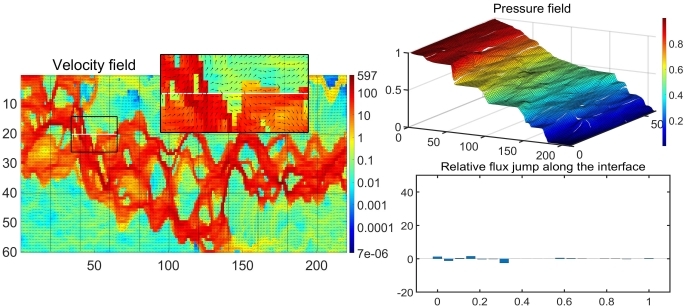}
	\end{minipage}

	\caption{Multiscale solution for the heterogeneous problem presented in colored images for the Robin condition parameter $\alpha=1$: Original MRCM method (top) and 
	MRCM-OS (bottom).}
	\label{hete-OL4-pic}
\end{figure}

Next, we focus on a comparison of MRCM-OS with piecewise constant spaces ($M_H,V_H$) and $\Lambda_{H}^i, i=1,...,m$ built with piecewise constant functions on the oversampling and with
the original MRCM with linear polynomial spaces for pressure and flux variables. 
We aim to illustrate that through the integration of oversampling and smoothing techniques, the approximation for the flux derived from piecewise constant spaces can not only match but potentially outperform those obtained by the original MRCM employing linear polynomial spaces.
In Fig. (\ref{het-OC-norm}) we present relative errors computed with respect to the fine grid solution in the $L^2(\Omega)$ norm considering both pressure and flux variables for different values of parameter $\alpha$. We remark that the computational cost of MRCM is, in fact, larger than that of MRCM-OS in computing the solutions: MRCM
requires twice the number of Multiscale Basis Functions (solutions of Eqs. (\ref{eq111dtilde}) - (\ref{eq122dtilde})) and the interface problem is two times larger that the one in MRCM-OS.

\begin{figure}[H]
	\centering
	\begin{minipage}{0.49\textwidth}
		\includegraphics[width = 1.0\textwidth]{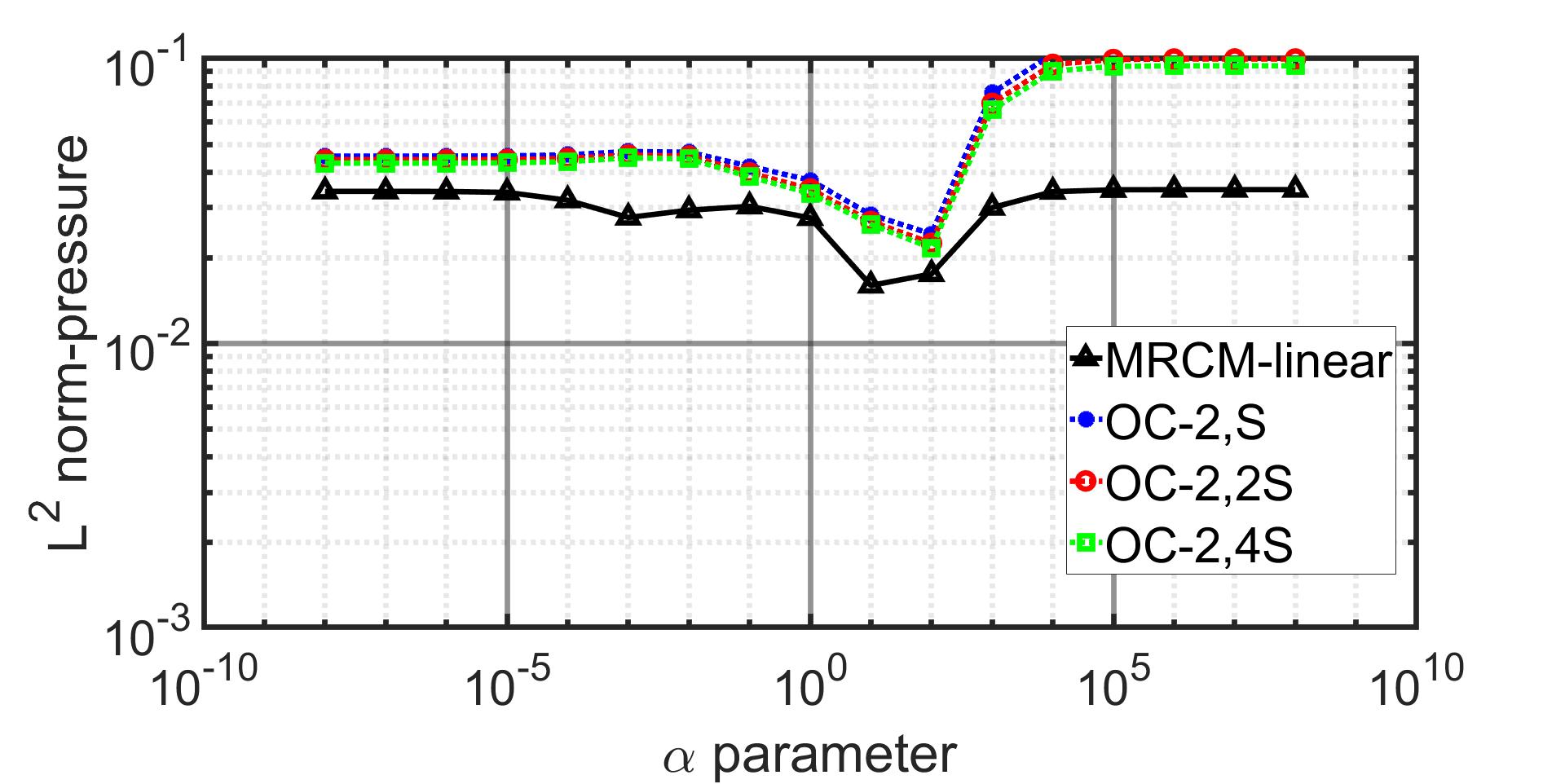}
	\end{minipage}
	\begin{minipage}{0.49\textwidth}
		\includegraphics[width = 1.0\textwidth]{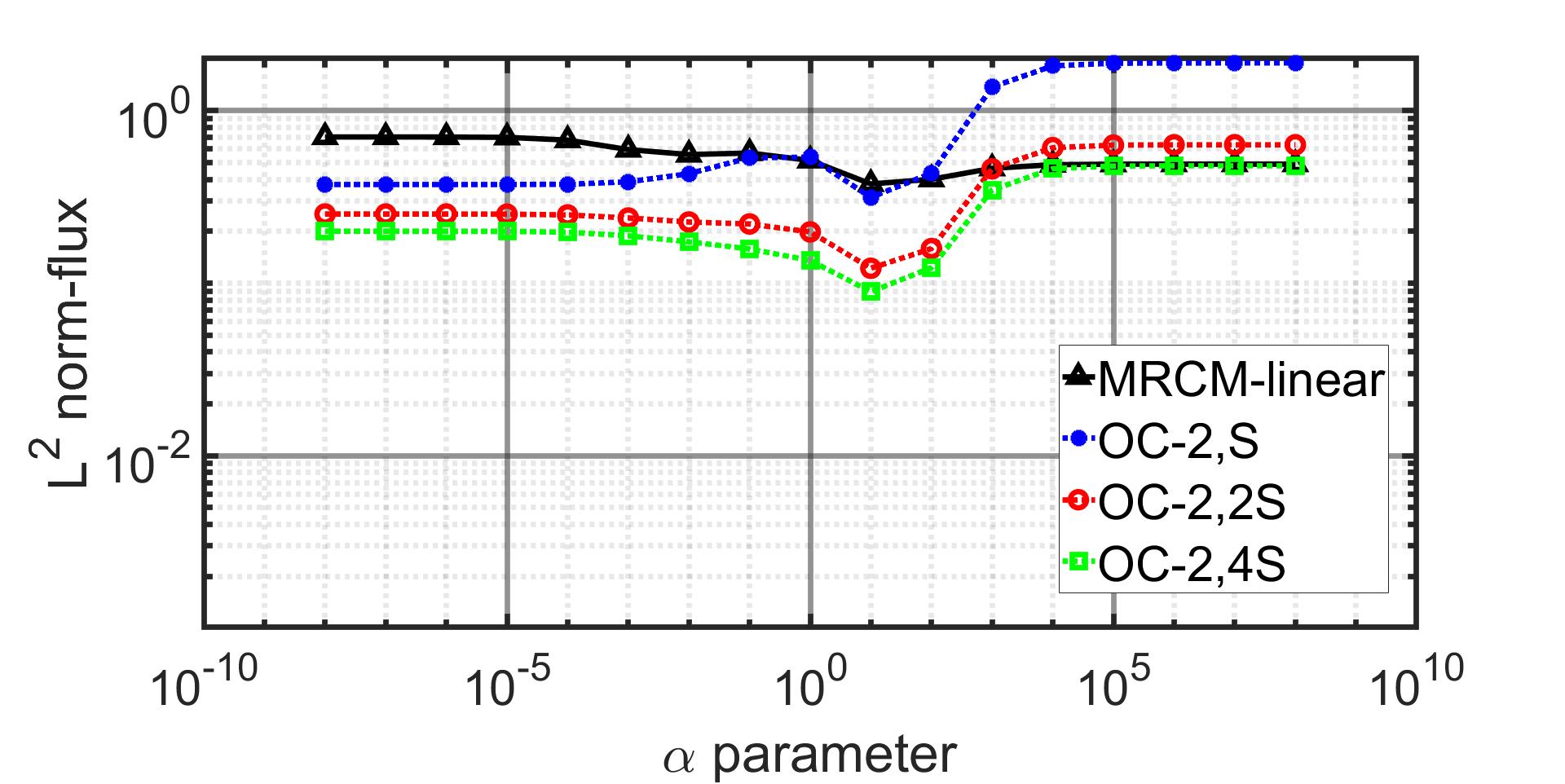}
	\end{minipage}
	\caption{Comparison of solutions of the heterogeneous problem obtained by MRCM with linear polynomial spaces and our method with piecewise constant spaces: Pressure relative error (left) and flux relative error (right).}
	\label{het-OC-norm}
\end{figure}

An important observation emerges from our analysis: solutions obtained with smaller values of $\alpha$ (resembling MMMFEM) exhibit slightly superior accuracy compared to those with larger $\alpha$ (akin to MHM). Interestingly, when $\alpha$ approximates 1, optimal outcomes are attained. Furthermore, from Fig. (\ref{het-OC-norm}), it becomes evident that within the proposed method, the pressure closely approximates the pressure obtained with the original MRCM. Moreover, the flux accuracy, achieved with a mere oversampling size of $2h$ and two smoothing steps, surpasses the results obtained from the original MRCM.

Next we report the $L^2(\Omega)$ norm (absolute error) of MRCM-OS results relative to the homogeneous problem. Fig. (\ref{ana-best}) 
contains the results of a mesh refinement study and illustrates that, in some cases, one may find that MRCM-OS exhibits comparable or even smaller errors than the fine grid solution in pressure (when $\alpha=100$) and flux (when $\alpha=10^8$) variables.

\begin{figure}[H]
	\centering
	\begin{minipage}{0.49\textwidth}
		\includegraphics[width = 1.0\textwidth]{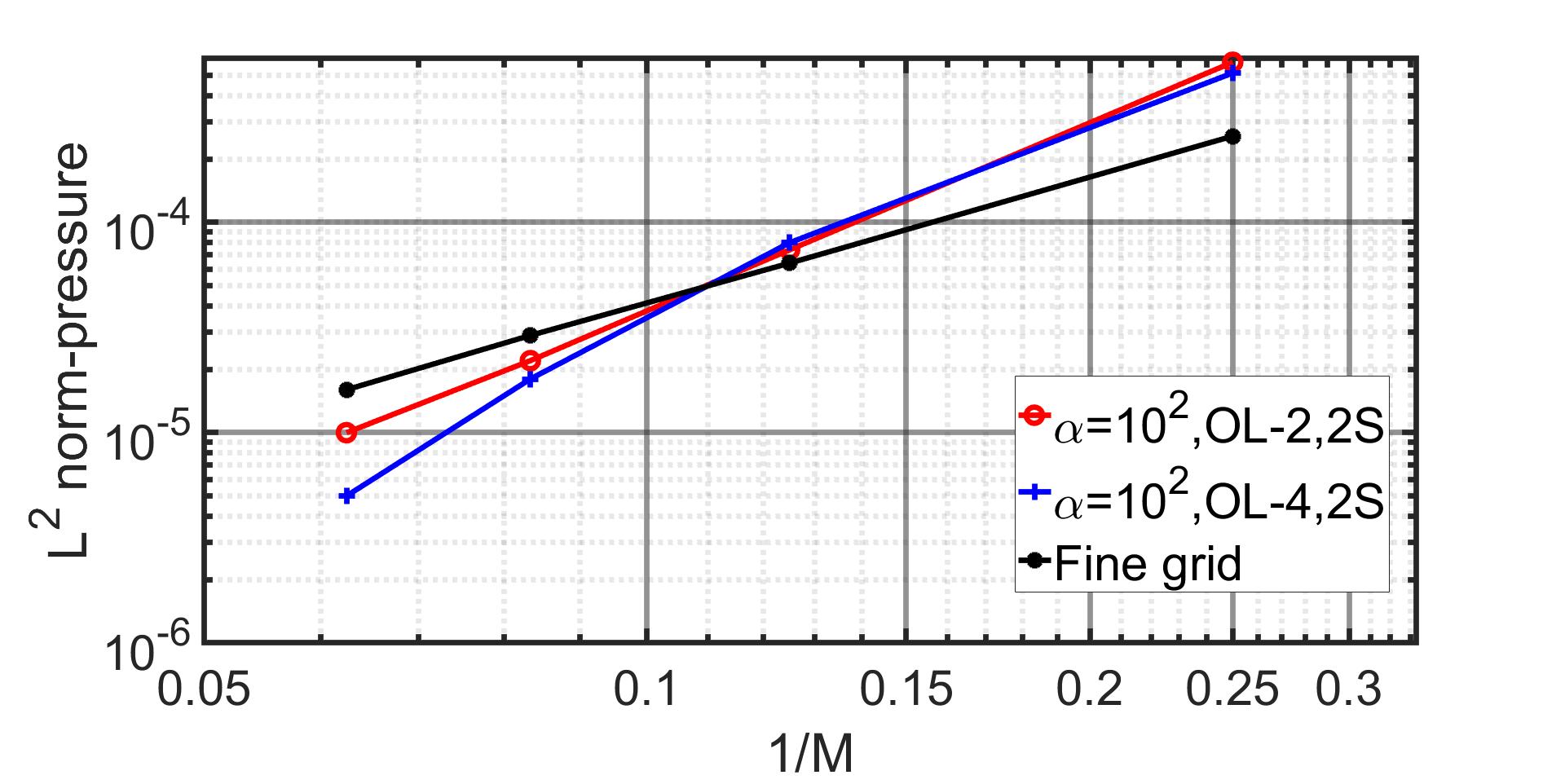}
	\end{minipage}
	\begin{minipage}{0.49\textwidth}
		\includegraphics[width = 1.0\textwidth]{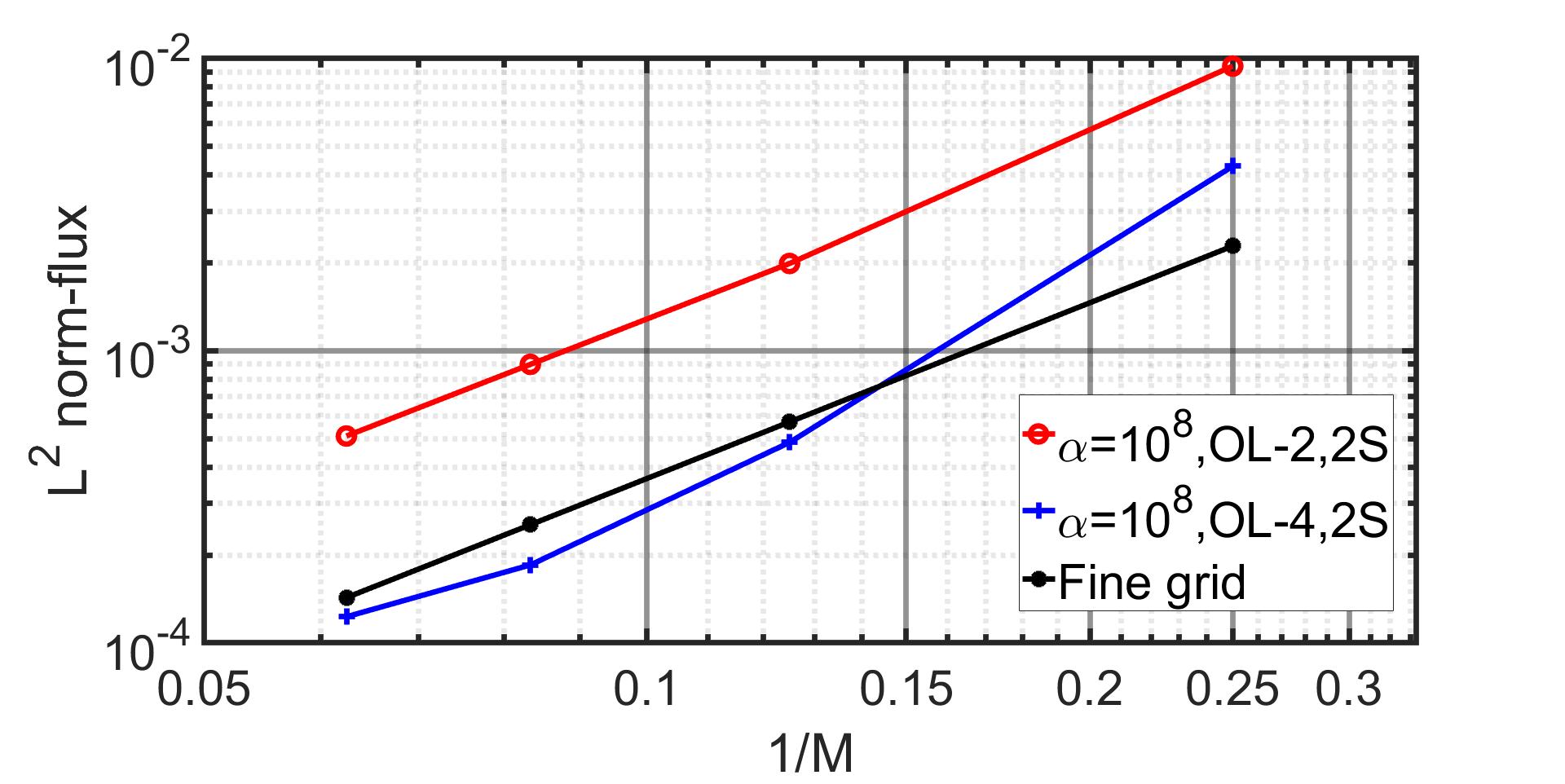}
	\end{minipage}
	\caption{Mesh refinement study for the homogenous problem wherein MRCM-OS exhibits lower errors than the fine grid solution: 
	Pressure absolute error (left) and flux absolute error (right).}
	\label{ana-best}
\end{figure}

Note in Fig. (\ref{ana-best}) that employing 2 smoothing steps yields MRCM-OS with pressure results with lower error rates than those of the fine grid solution when $\alpha=100$. Moreover, comparable accuracy for MRCM-OS flux is observed for an oversampling size of $4h$ when $\alpha=10^8$.

\begin{rmk} 
As noted earlier, the added computational burden of the smoothing steps is minimal, thanks to the reuse of the factorization computed during the initial smoothing step. \end{rmk}

In the following subsections, we will only consider linear polynomial spaces for MRCM-O and MRCM-OS.

\subsection{MRCM-O}

This section illustrates the results obtained from both homogenous and heterogeneous problems using only oversampling techniques.

We first consider the homogenous problem and we present the $L^2(\Omega)$ norm relative errors with respect to the homogeneous solution in Fig. (\ref{analytical-O}) in a variable $\alpha$ study.
These results  illustrate that the convergence rate of our method closely aligns with that of the original MRCM. 
However the errors achieved by our method 
with an oversampling size of $4h$ is nearly half that of the latter for both pressure and flux variables.

\begin{figure}[H]
	\centering
	\begin{minipage}[t]{1\linewidth}
		\includegraphics[width = 0.49\textwidth]{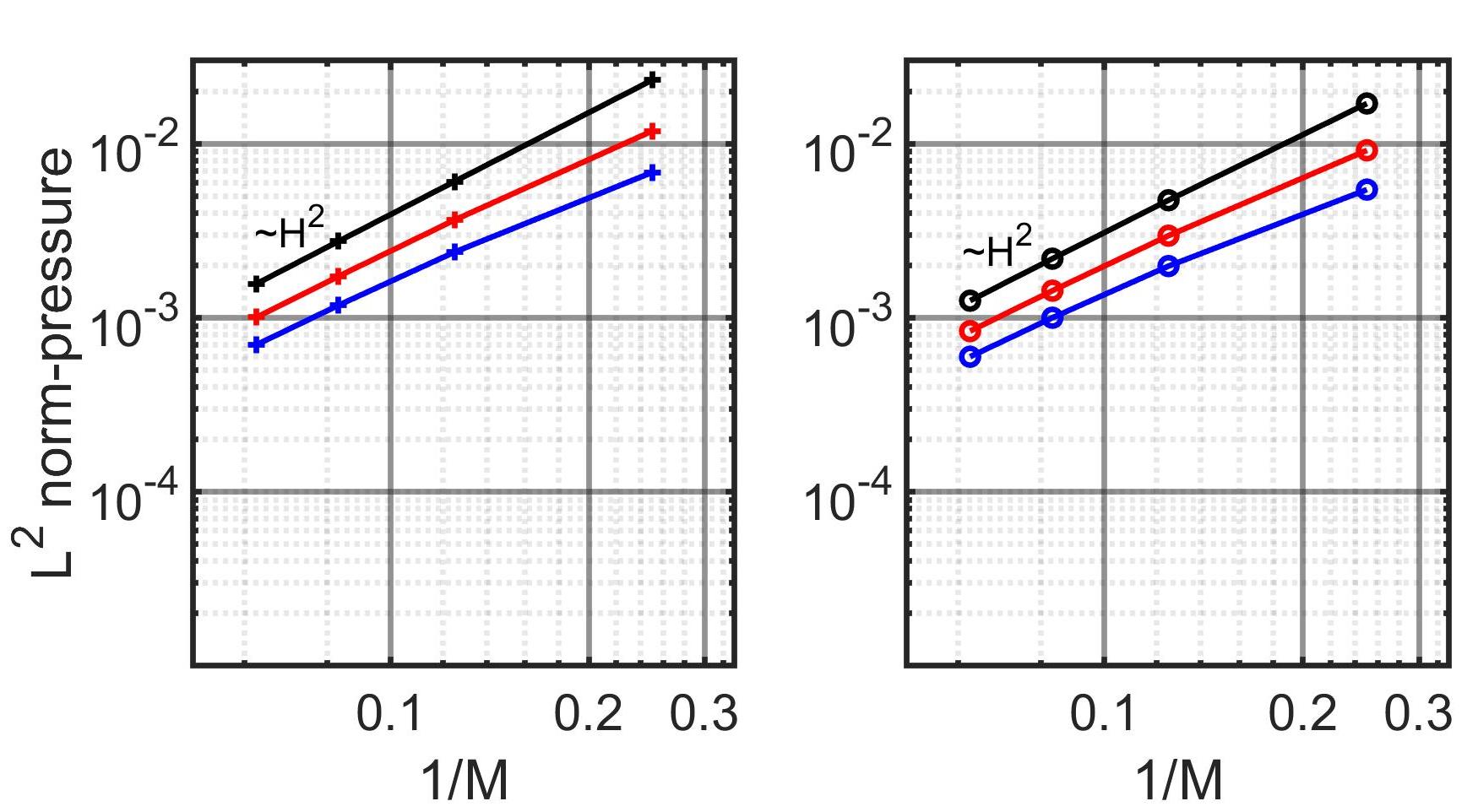}
		\includegraphics[width = 0.49\textwidth]{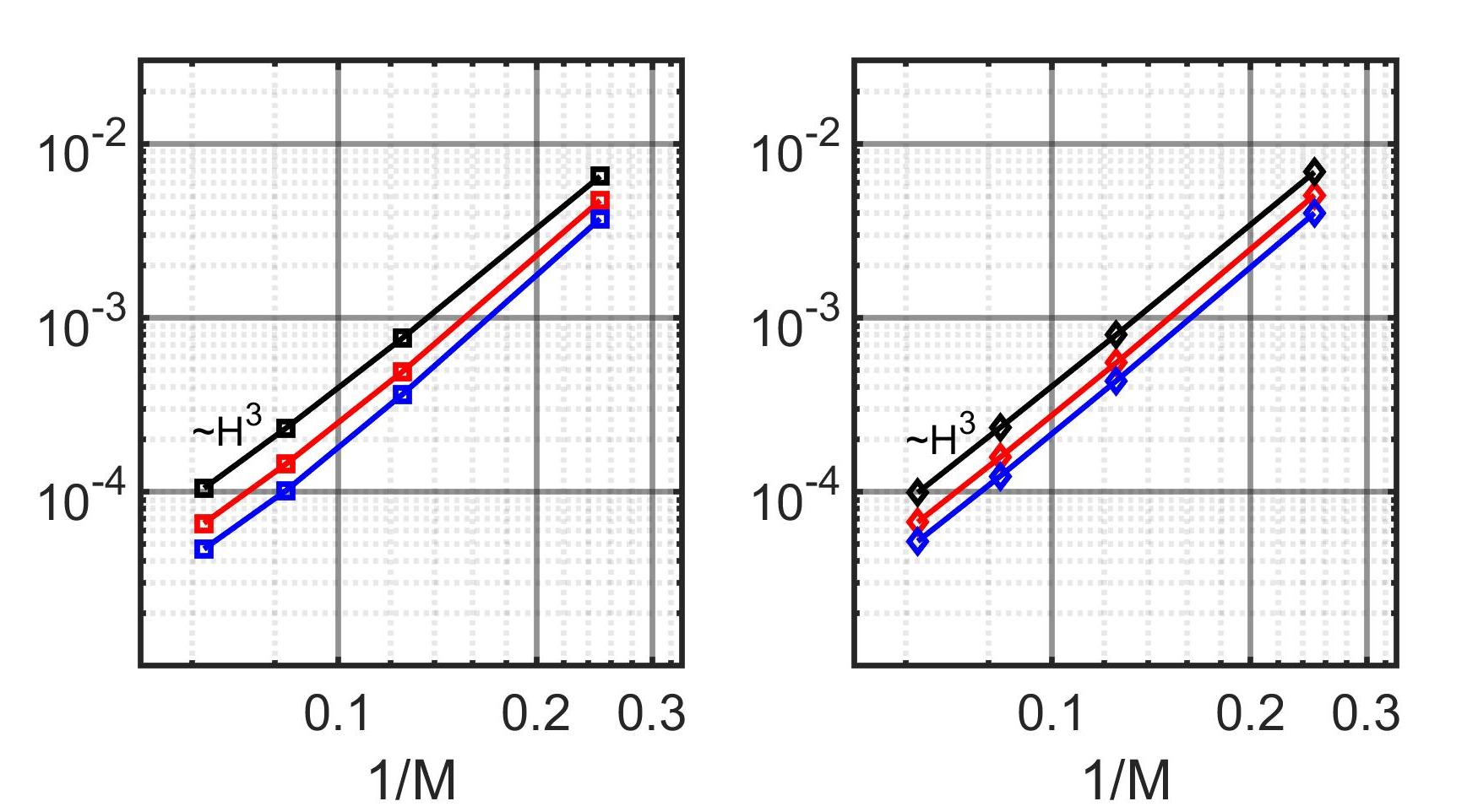}
	\end{minipage}

	\begin{minipage}[t]{1\linewidth}
		\includegraphics[width = 0.49\textwidth]{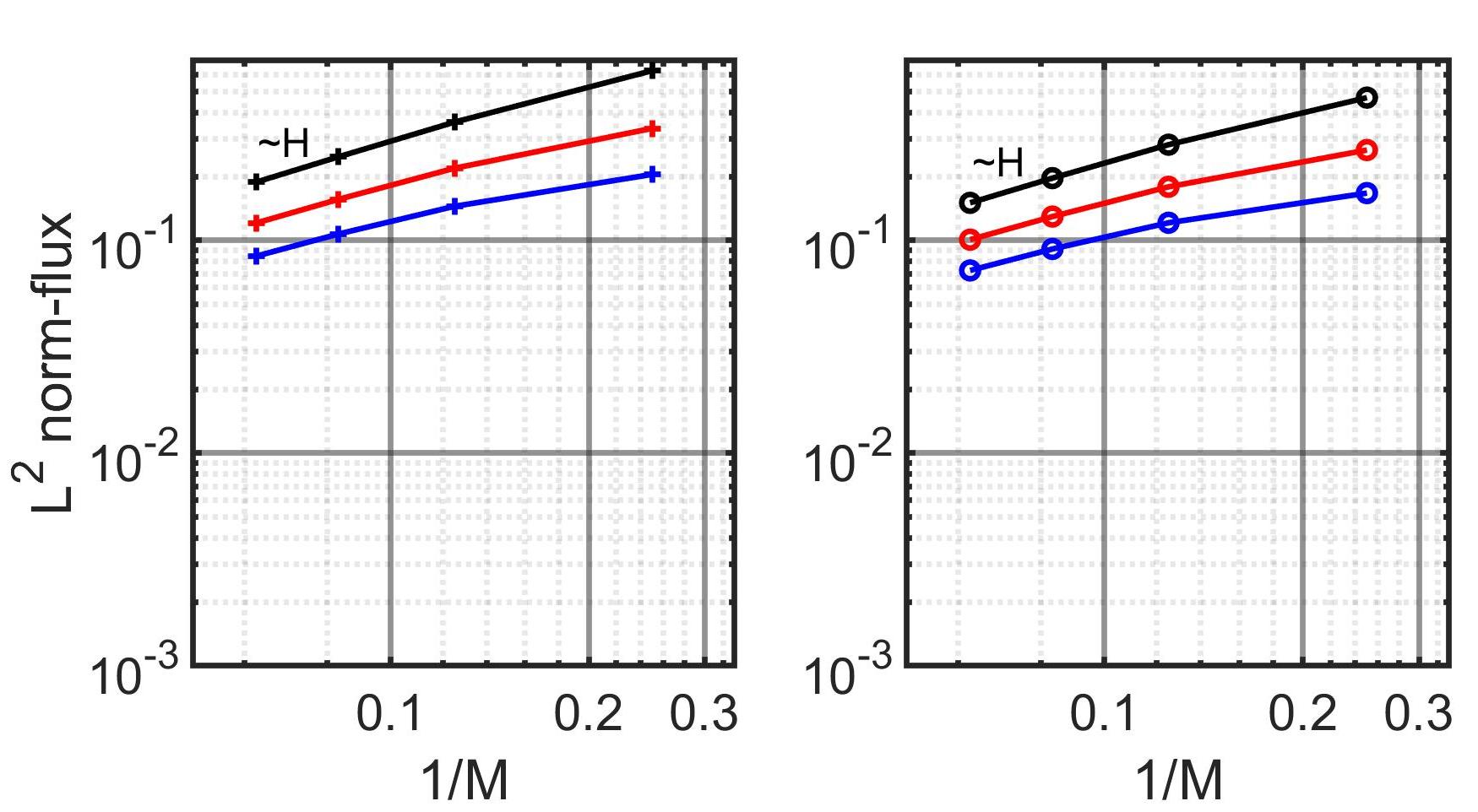}
		\includegraphics[width = 0.49\textwidth]{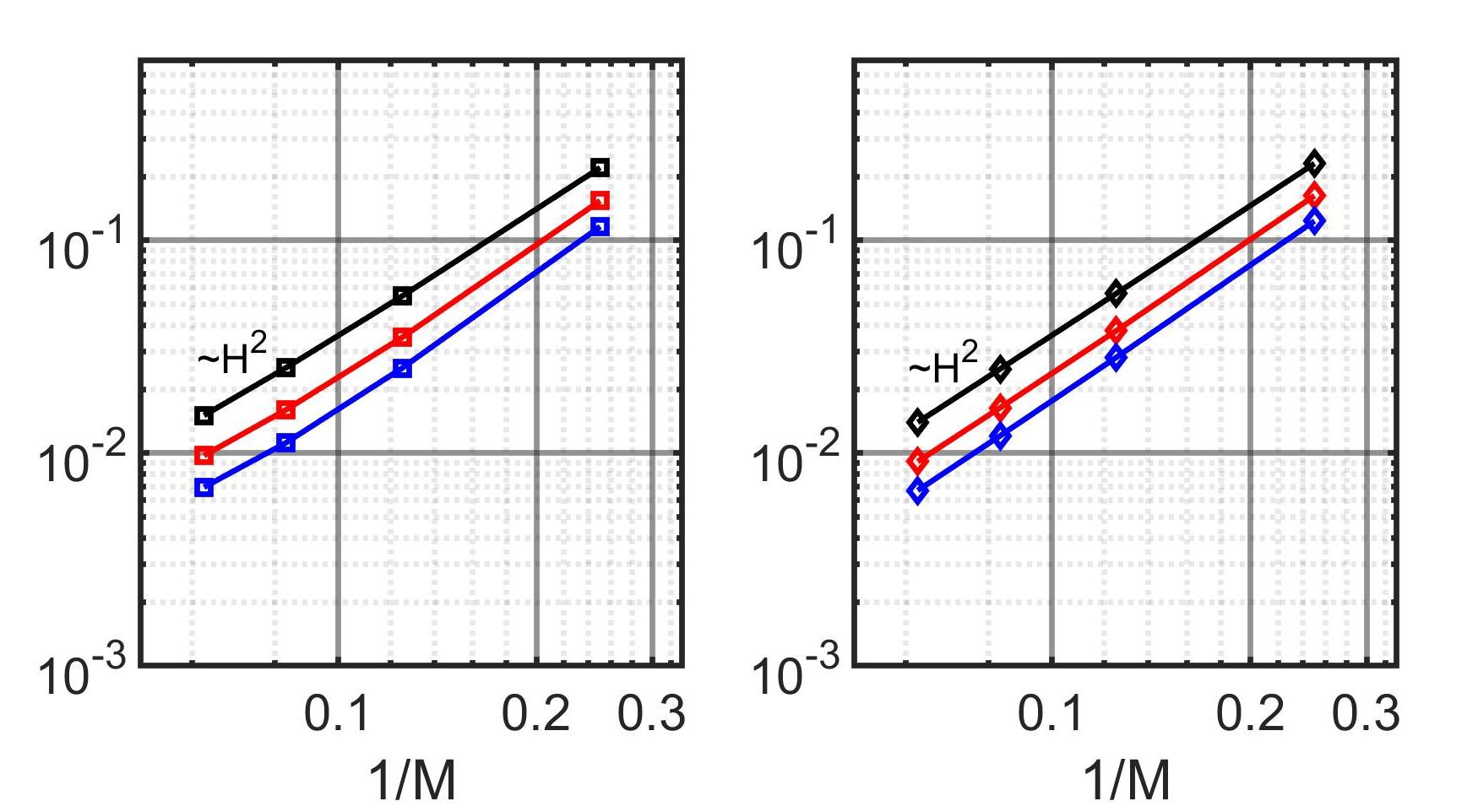}
	\end{minipage}
	
	\begin{minipage}[t]{0.243\linewidth}
	\centering
		\includegraphics[width = 0.5\textwidth]{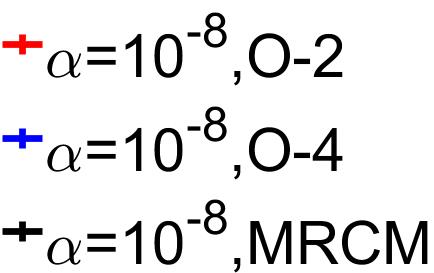}
	\end{minipage}
	\begin{minipage}[t]{0.243\linewidth}
	\centering
		\includegraphics[width = 0.45\textwidth]{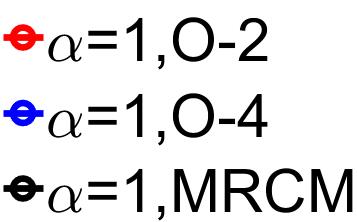}
	\end{minipage}
	\begin{minipage}[t]{0.243\linewidth}
	\centering
		\includegraphics[width = 0.5\textwidth]{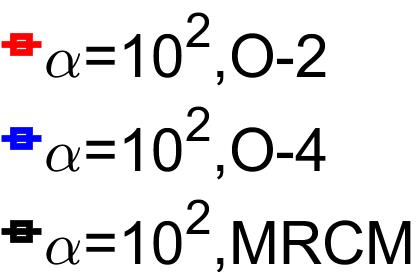}
	\end{minipage}
	\begin{minipage}[t]{0.243\linewidth}
	\centering
		\includegraphics[width = 0.5\textwidth]{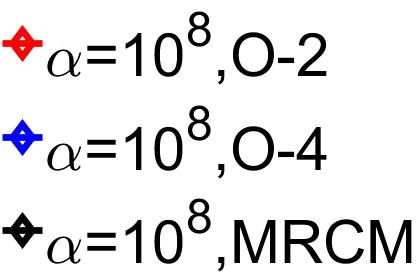}
	\end{minipage}

	\caption{Convergence rate study for the homogeneous problem using only oversampling: Pressure (top) and Flux (bottom).}
	\label{analytical-O}
\end{figure}

Next we present the $L^2(\Omega)$ norm relative error with respect to the fine grid solution for the heterogeneous problem in Fig. (\ref{hete-O}). It is evident that for the heterogeneous problem an oversampling size of $4h$ results in a reduction of error by nearly one order of magnitude.

\begin{figure}[H]
	\centering
	\begin{minipage}[t]{0.49\linewidth}

		\includegraphics[width = 1.0\textwidth]{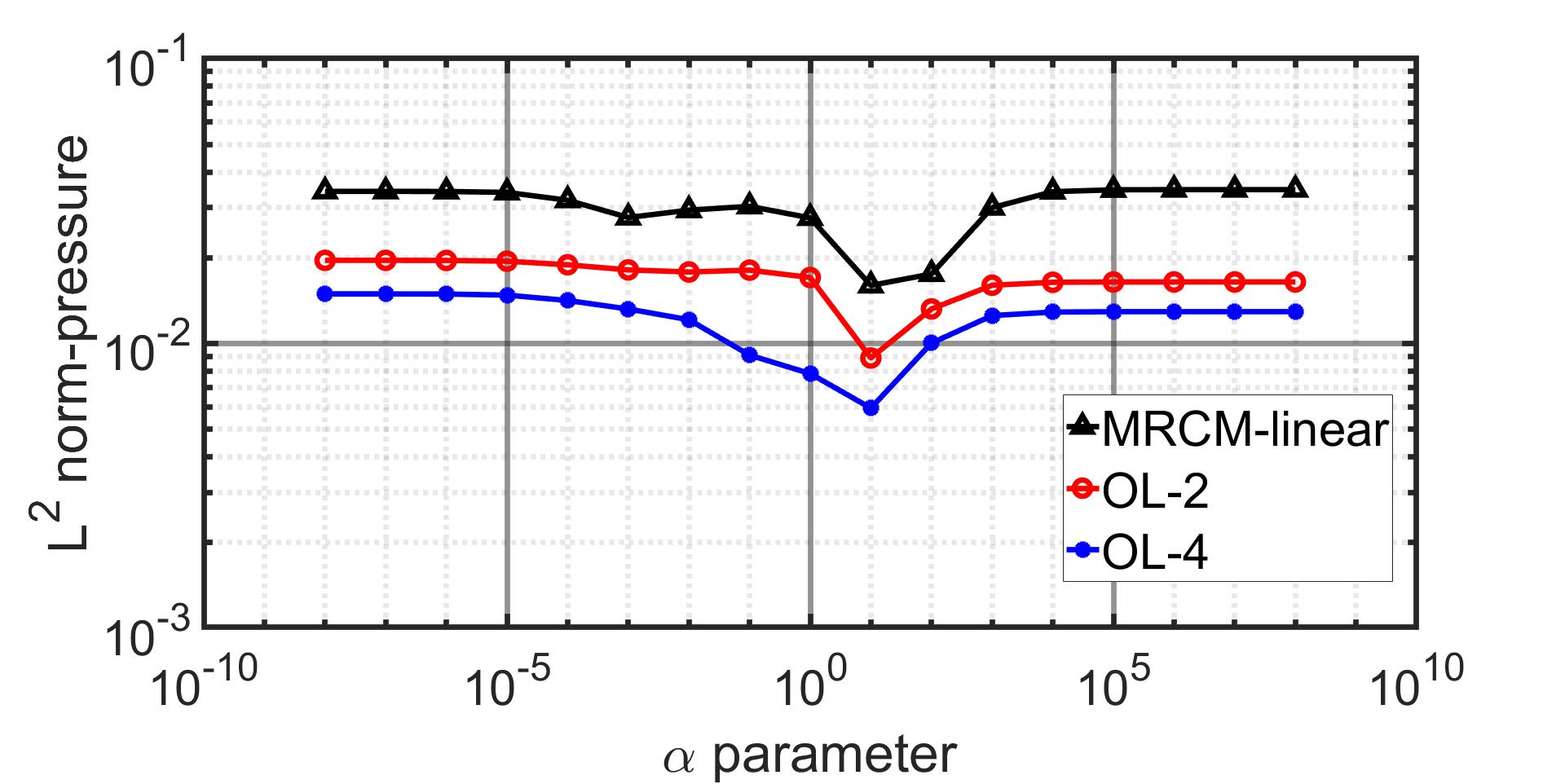}
	\end{minipage}
	\begin{minipage}[t]{0.49\linewidth}
		\includegraphics[width = 1.0\textwidth]{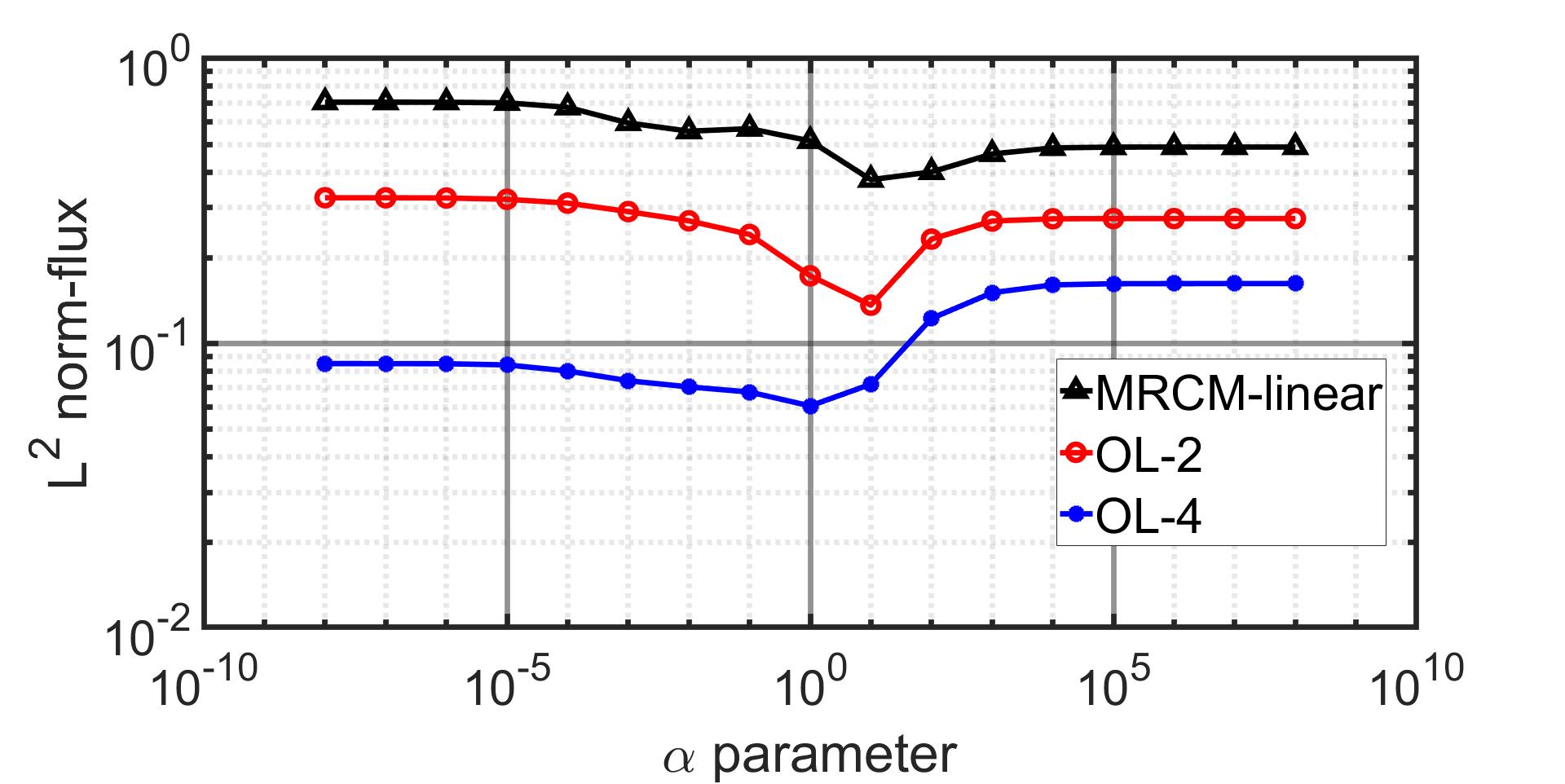}
	\end{minipage}

	\caption{ $\alpha$ parameter study for the heterogeneous problem with only oversampling: pressure (left), flux (right).}
	\label{hete-O}
\end{figure}

In conclusion, oversampling techniques significantly enhance accuracy, with an observed improvement of one order of magnitude. Subsequently, we will demonstrate that additional smoothing steps lead to even higher levels of accuracy in both pressure and flux variables.

\subsection{MRCM-OS}

In this section, we discuss the benefits derived from the application of both oversampling and smoothing techniques.

\subsubsection{Homogenous problem}
Successively applying smoothing steps allows the solution derived from the multiscale method to converge to the fine grid solution. However, an increase in the number of smoothing steps not only incurs higher computational costs but also results in a slower convergence rate. In Fig. (\ref{homo-number}), we employ an $8 \times 8$ domain 
decomposition with an oversampling size of $2h$ to identify an optimal number of smoothing steps. We present the $L^2(\Omega)$ norm absolute error with respect to the fine grid solution. Fig.(\ref{homo-number}) illustrates that the optimal number of smoothing steps lies within the range of 2 to 4, as increasing the number of steps beyond this range results in a diminishing slope.

\begin{figure}[H]
	\centering
	\begin{minipage}{0.49\textwidth}
		\includegraphics[width = 1.0\textwidth]{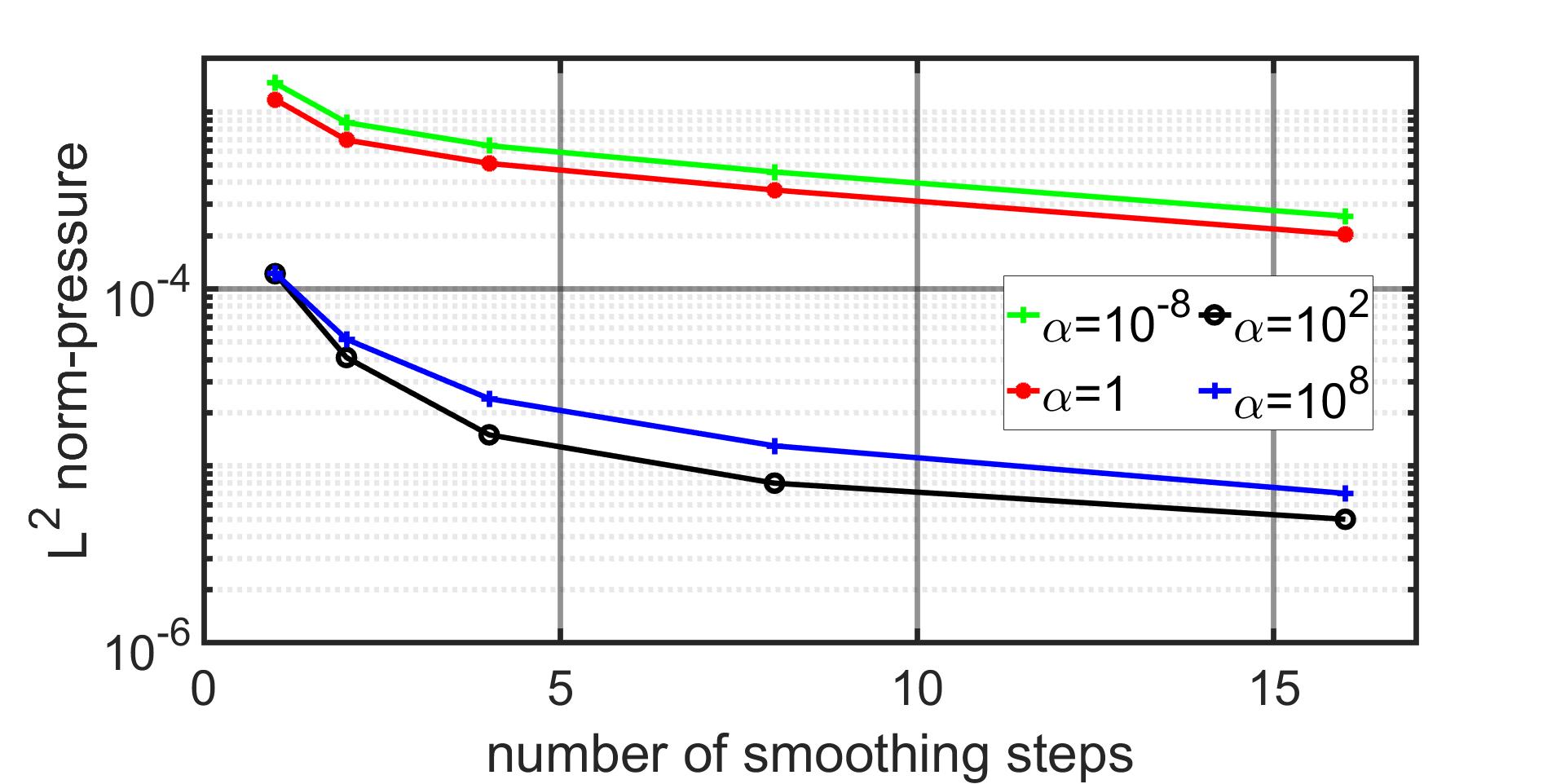}
	\end{minipage}
	\begin{minipage}{0.49\textwidth}
		\includegraphics[width = 1.0\textwidth]{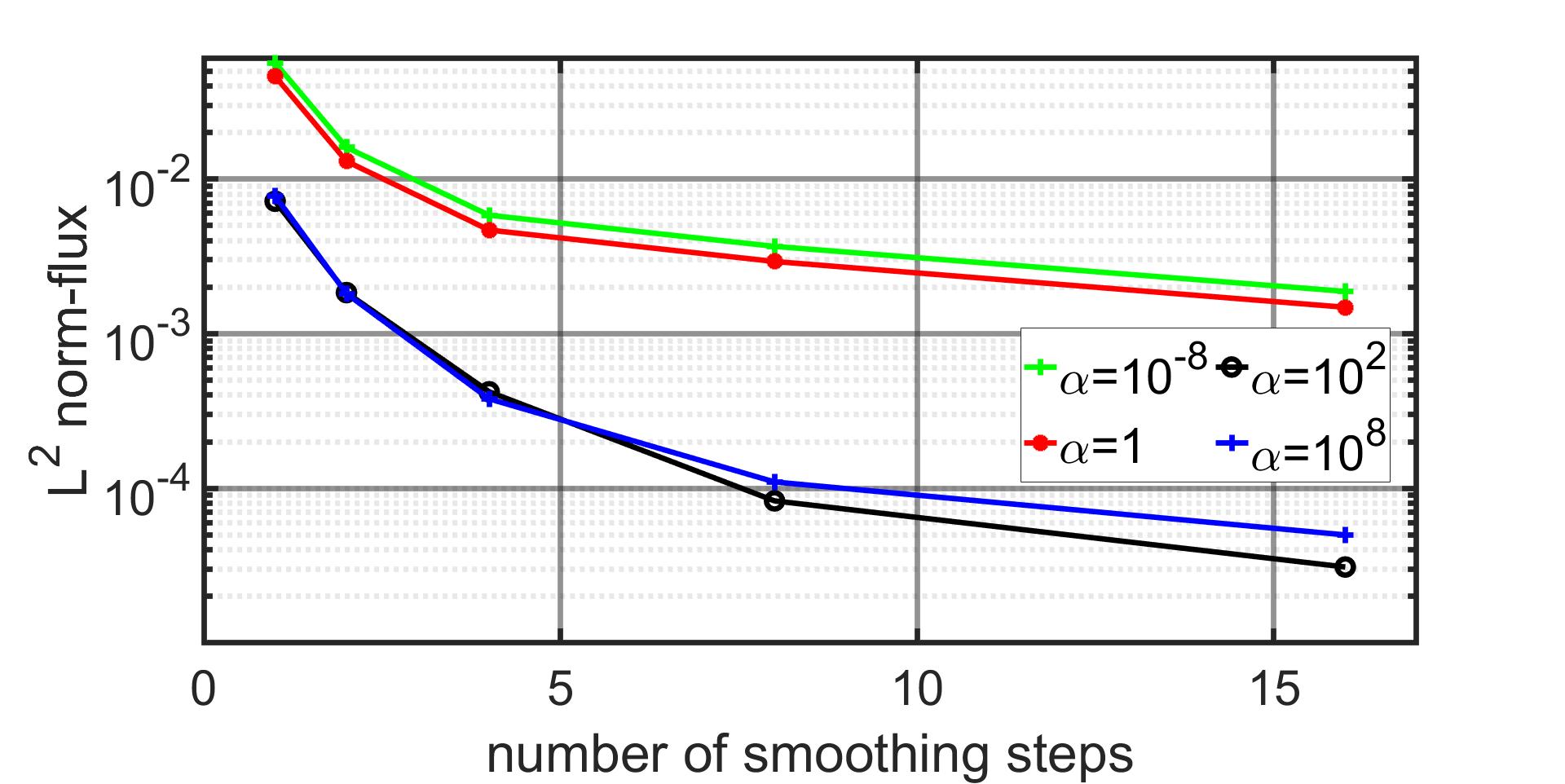}
	\end{minipage}
	\caption{Exploring the optimal number of smoothing steps for the homogeneous problem with oversampling size $2h$: 
	Absolute pressure error (left) and absolute flux error (right).}
	\label{homo-number}
\end{figure}

Next, we consider a study of parameter $\alpha$ concerning the convergence rate with oversampling size set to $2h$, and the number of smoothing steps set to 2 and 4. 
Fig. (\ref{analytical-OS}) displays the $L^2(\Omega)$ norm absolute error relative to the analytical solution. 
Again, in line with the results reported in Fig. (\ref{analytical-O})
these results  illustrate that the convergence rate of our method closely aligns with that of the original MRCM.
Particularly, our method achieves a one-order-of-magnitude improvement in the pressure variable and two orders-of-magnitude improvement in the flux variable.

\begin{figure}[H]
	\centering
	\begin{minipage}[t]{1\linewidth}
		\includegraphics[width = 0.49\textwidth]{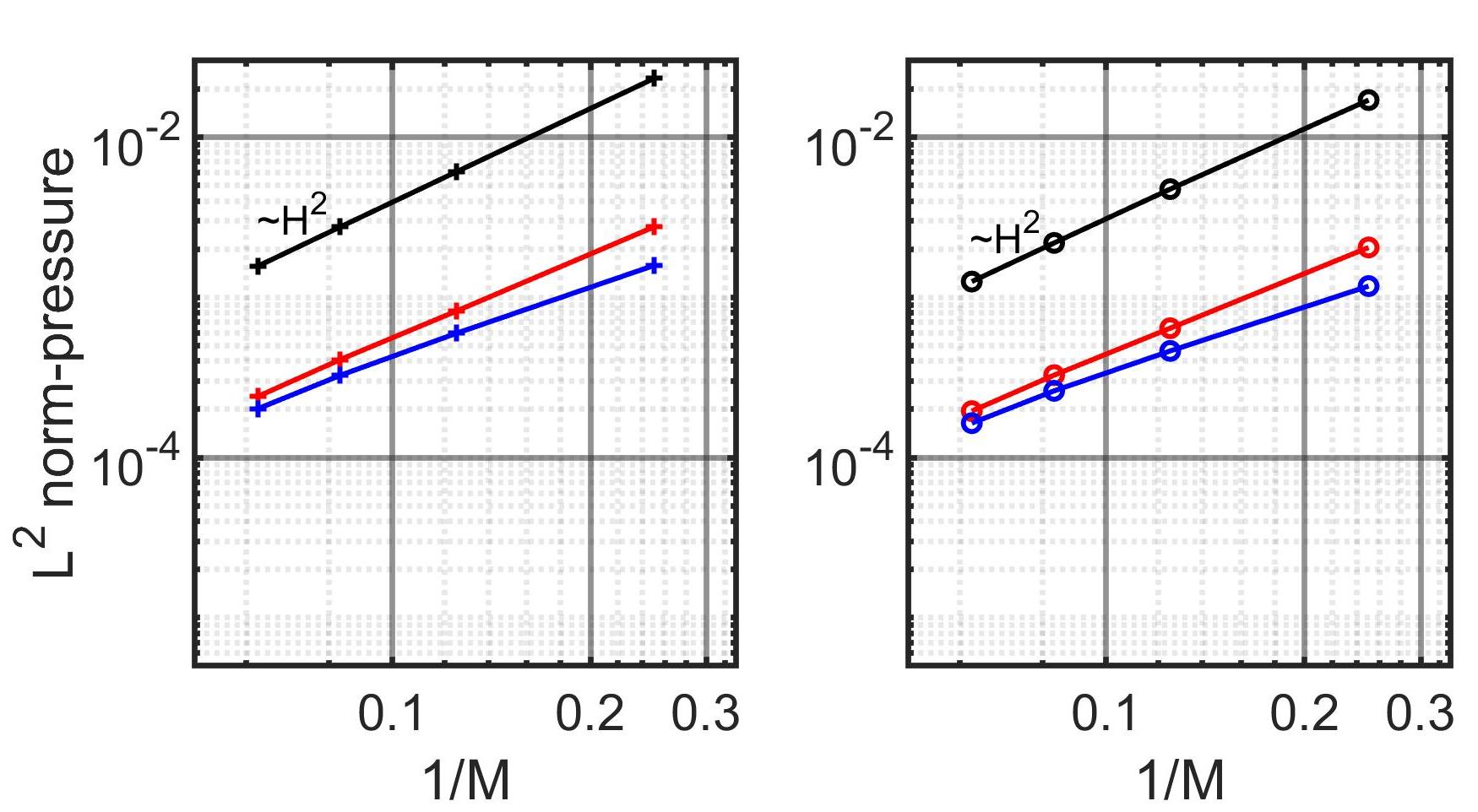}
		\includegraphics[width = 0.49\textwidth]{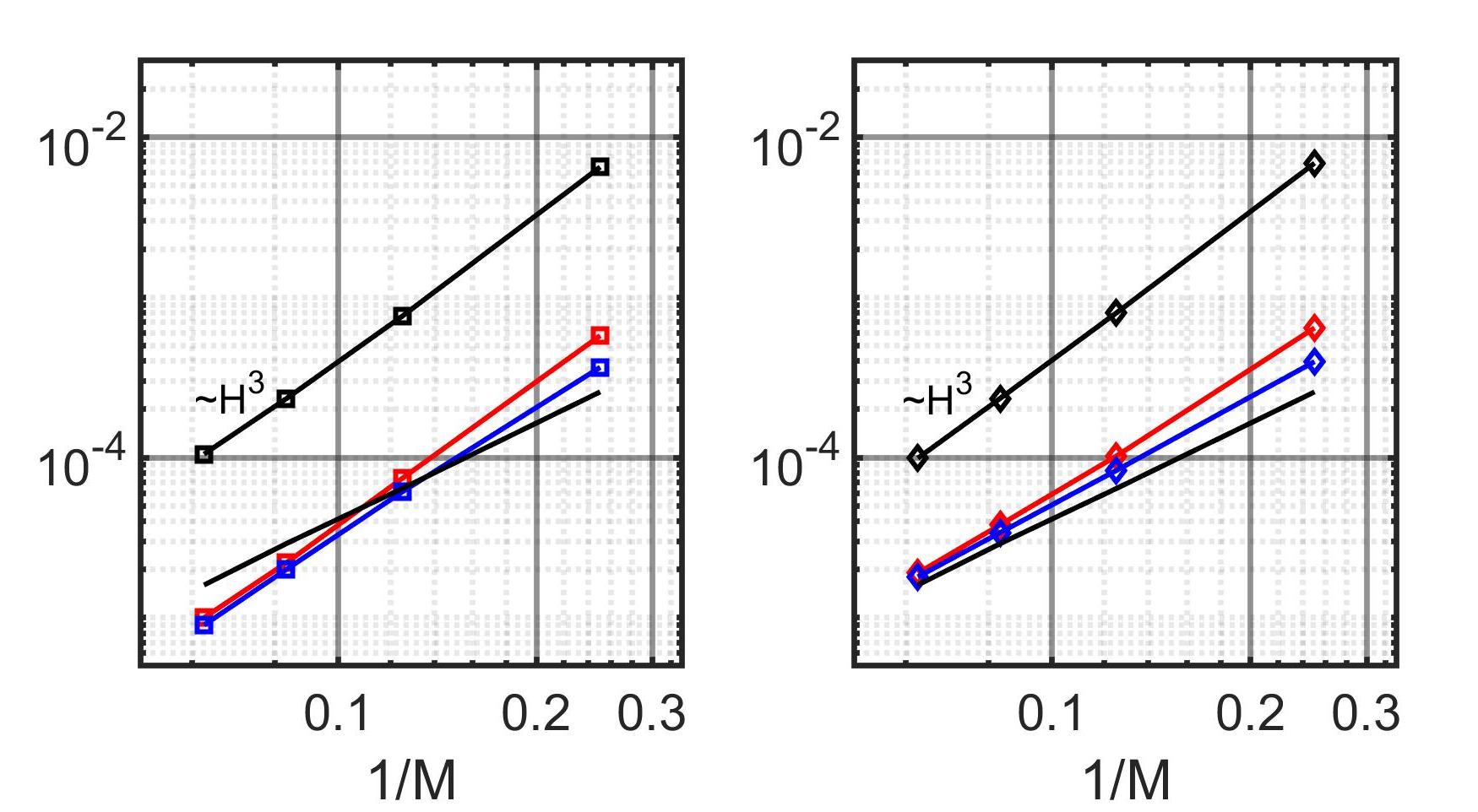}
	\end{minipage}

	\begin{minipage}[t]{1\linewidth}
		\includegraphics[width = 0.49\textwidth]{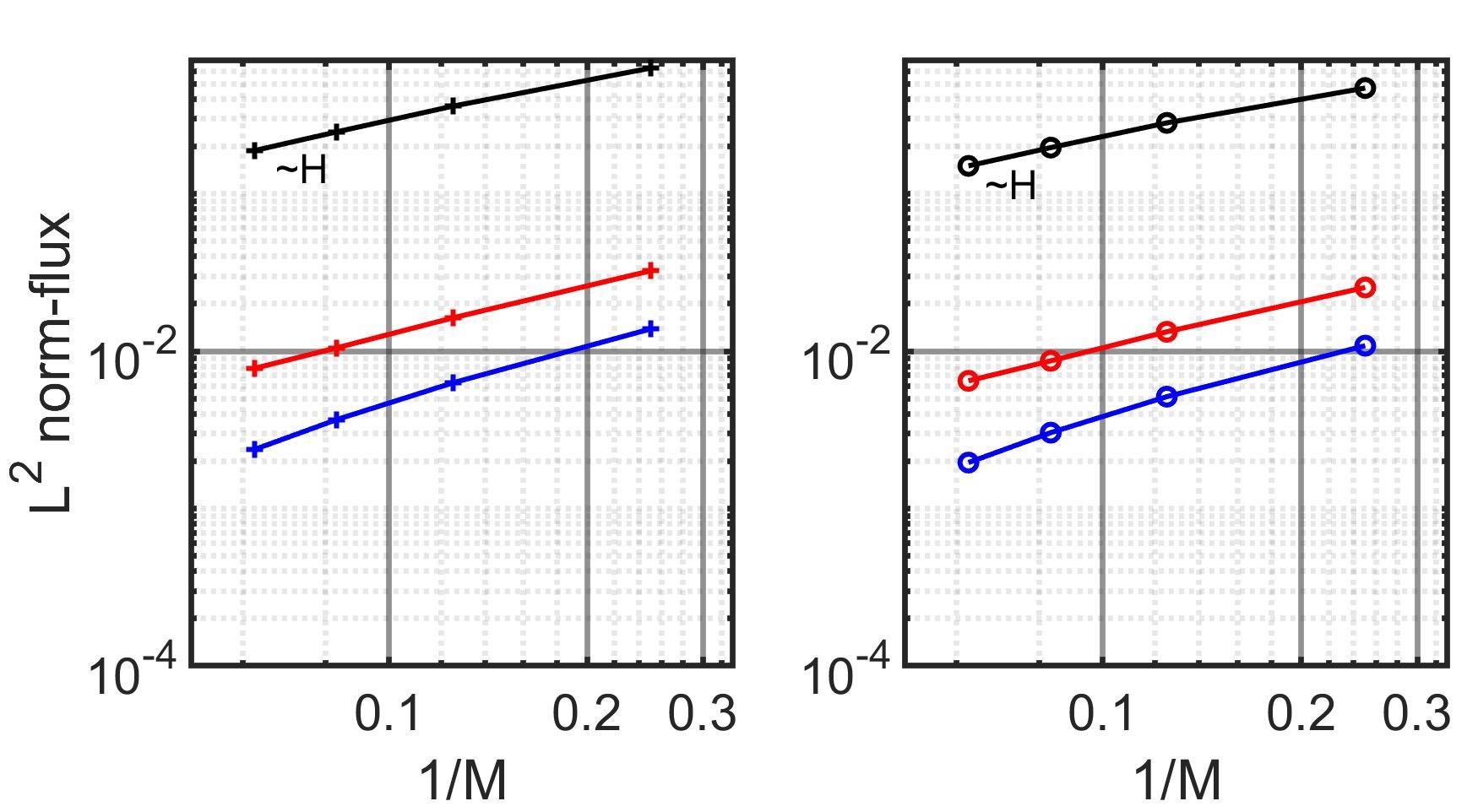}
		\includegraphics[width = 0.49\textwidth]{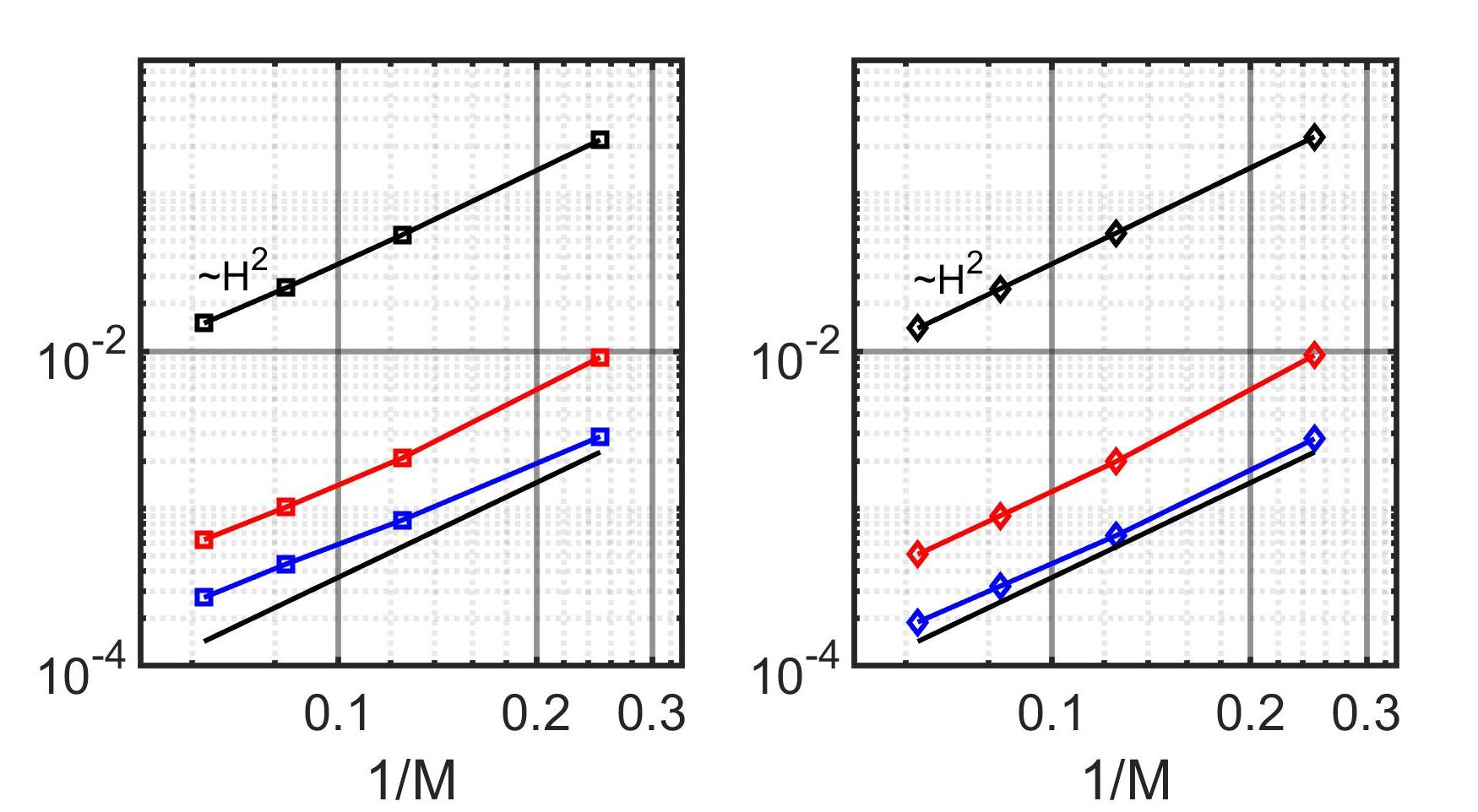}
	\end{minipage}
	
	\begin{minipage}[t]{0.243\linewidth}
	\centering
		\includegraphics[width = 0.5\textwidth]{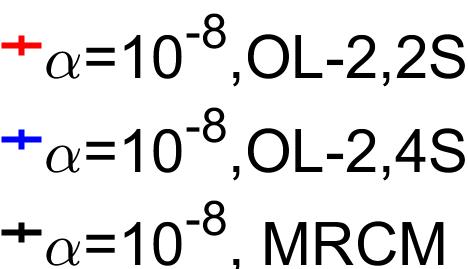}
	\end{minipage}
	\begin{minipage}[t]{0.243\linewidth}
	\centering
		\includegraphics[width = 0.4\textwidth]{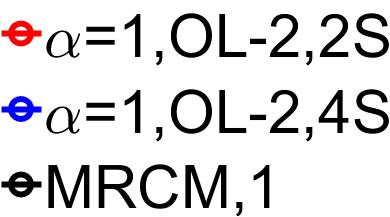}
	\end{minipage}
	\begin{minipage}[t]{0.243\linewidth}
	\centering
		\includegraphics[width = 0.5\textwidth]{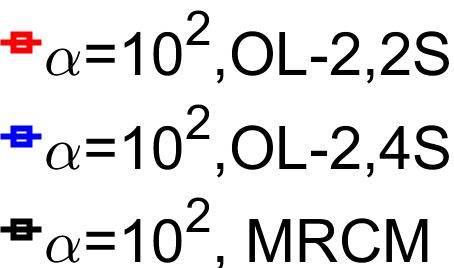}
	\end{minipage}
	\begin{minipage}[t]{0.243\linewidth}
	\centering
		\includegraphics[width = 0.5\textwidth]{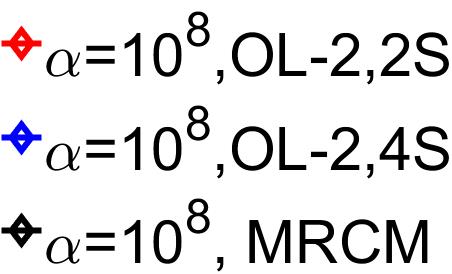}
	\end{minipage}
	\caption{Convergence rate study for the homogeneous problem with oversampling size $2h$: Pressure (top) and Flux (bottom).}
	\label{analytical-OS}
\end{figure}

In order to better understand the improvements introduced by MRCM-OS, next we utilize the multiscale solutions (velocity and pressure) derived from both the original MRCM and our method across four distinct configurations to compute the difference with the analytical solution.  In Fig. (\ref{anacolor-1e-8}), we set $\alpha=10^{-8}$, to get a  MMMFEM-like solution. In Figs. (\ref{anacolor-1})-(\ref{anacolor-100}) we set intermediate values $\alpha=1$ or $100$. In Fig. (\ref{anacolor-1e8}), we set $\alpha=10^8$,  to obtain a MHM-like solution. In these studies the oversampling size is set to $2h$ and two smoothing steps were applied.

\begin{figure}[H]
	\centering

	\begin{minipage}{0.6\textwidth}
		\centering
		\includegraphics[width = 1.0\textwidth]{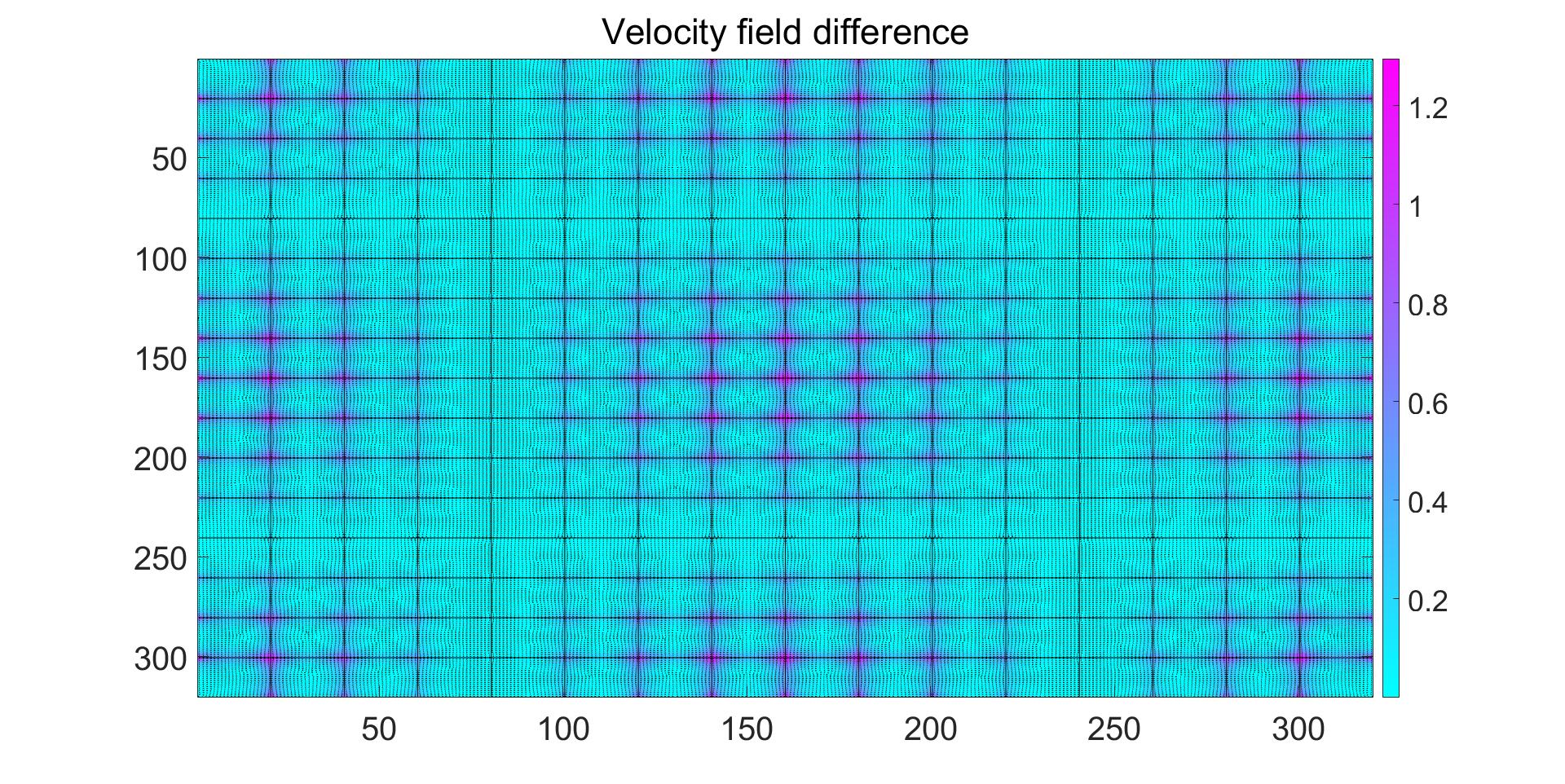}
	\end{minipage}
	\begin{minipage}{0.39\textwidth}\vspace{1cm}
		\centering
		\includegraphics[width = 1.0\textwidth]{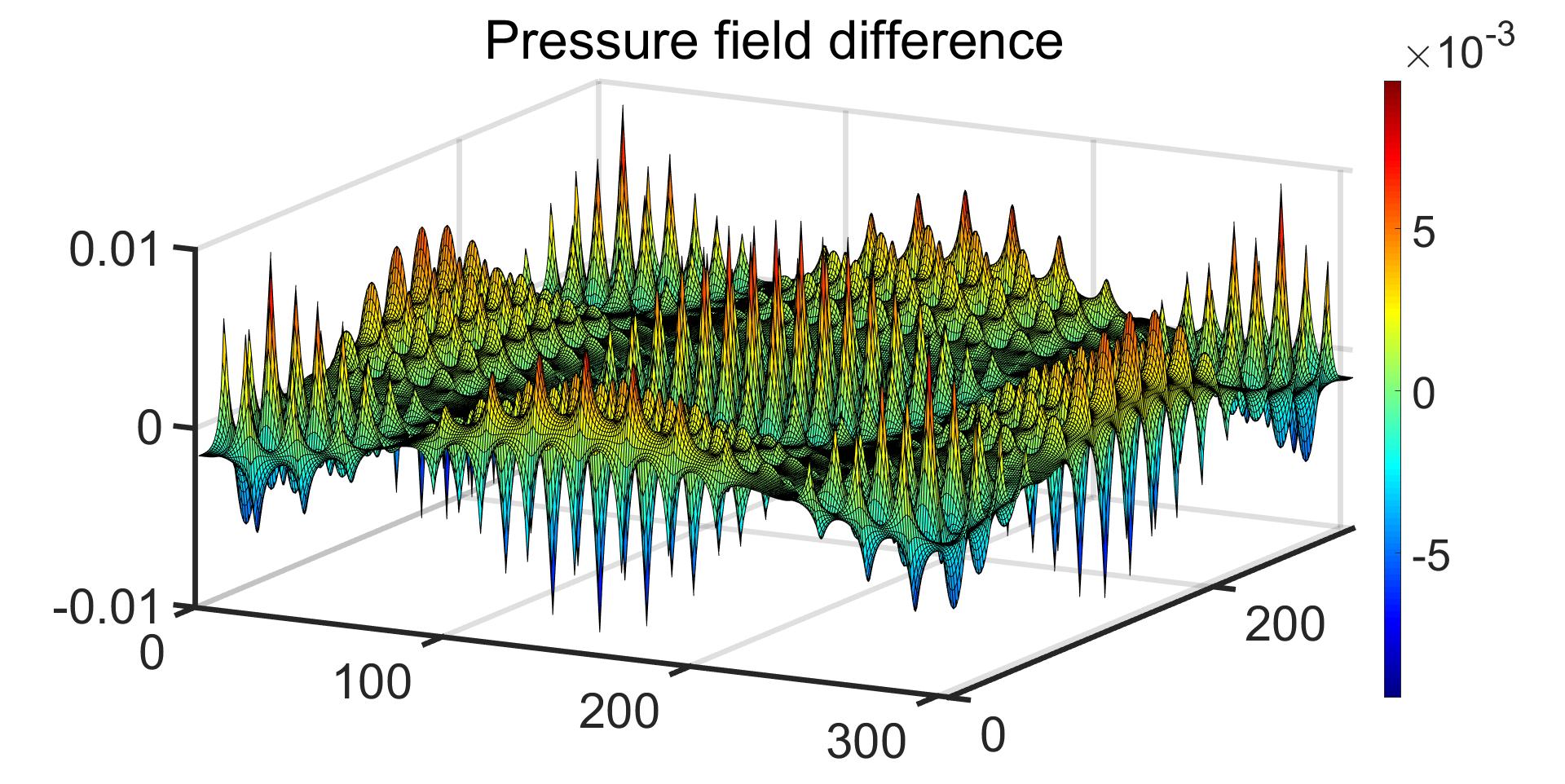}
	\end{minipage}

	\begin{minipage}{0.6\textwidth}
		\centering
		\includegraphics[width = 1.0\textwidth]{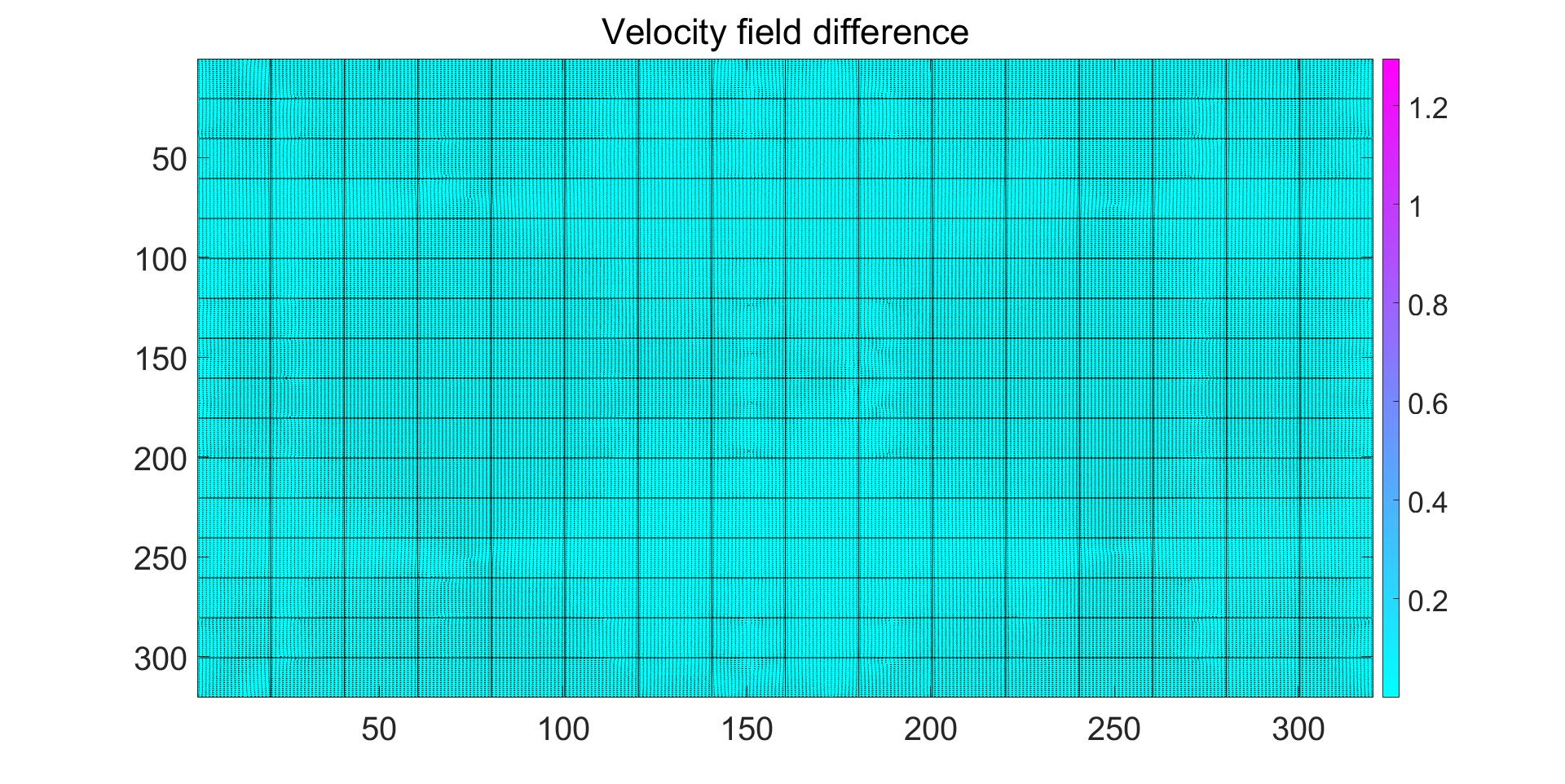}
	\end{minipage}
	\begin{minipage}{0.39\textwidth}\vspace{1cm}
		\centering
		\includegraphics[width = 1.0\textwidth]{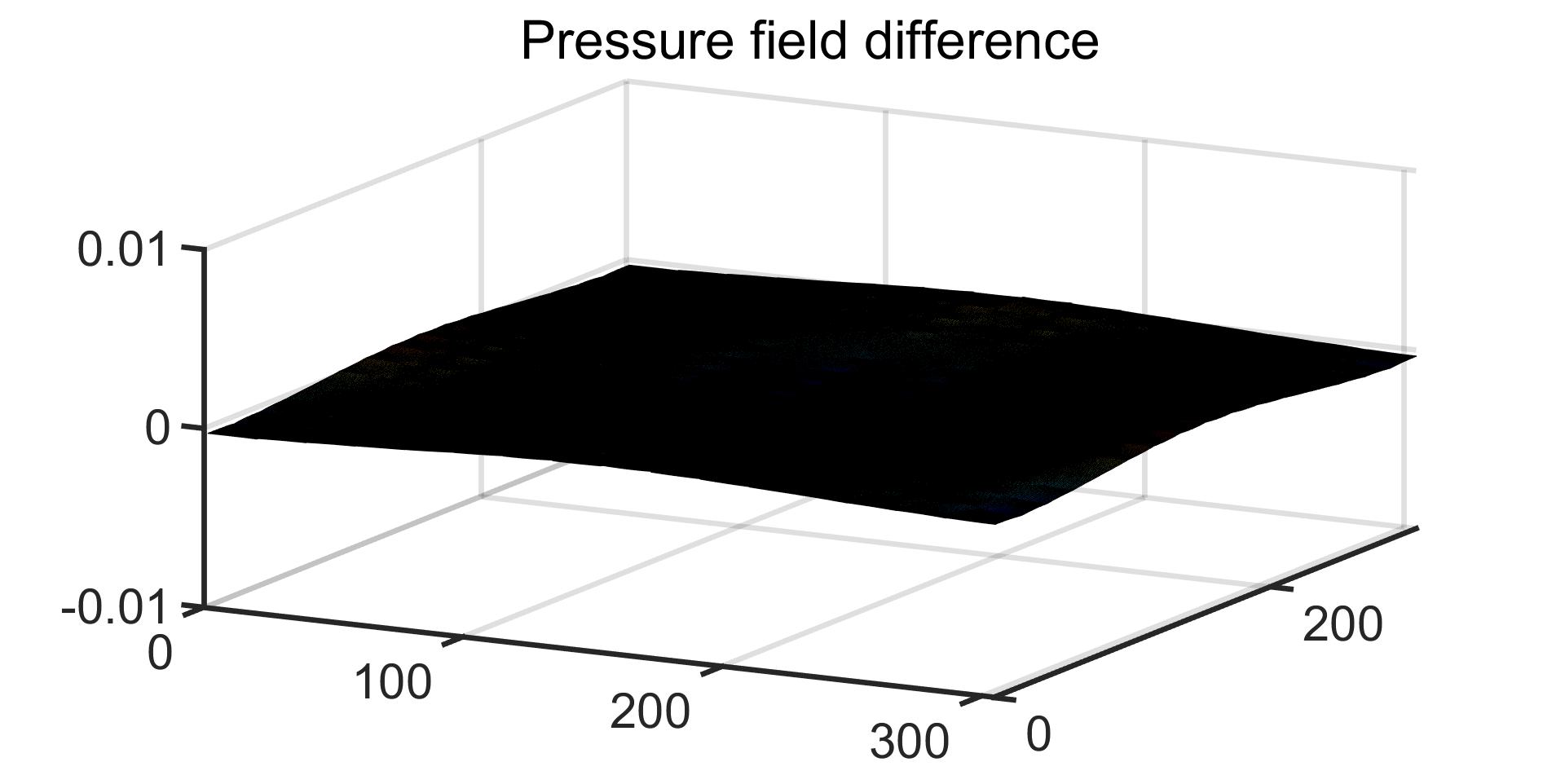}
	\end{minipage}
	\caption{Difference in multiscale solutions for the homogeneous problem with respect to the analytical solution, using different methods for Robin condition parameter $\alpha=10^{-8}$: MRCM method (top), Our method (bottom).}
	\label{anacolor-1e-8}
\end{figure}

\begin{figure}[H]
	\centering

	\begin{minipage}{0.6\textwidth}
		\centering
		\includegraphics[width = 1.0\textwidth]{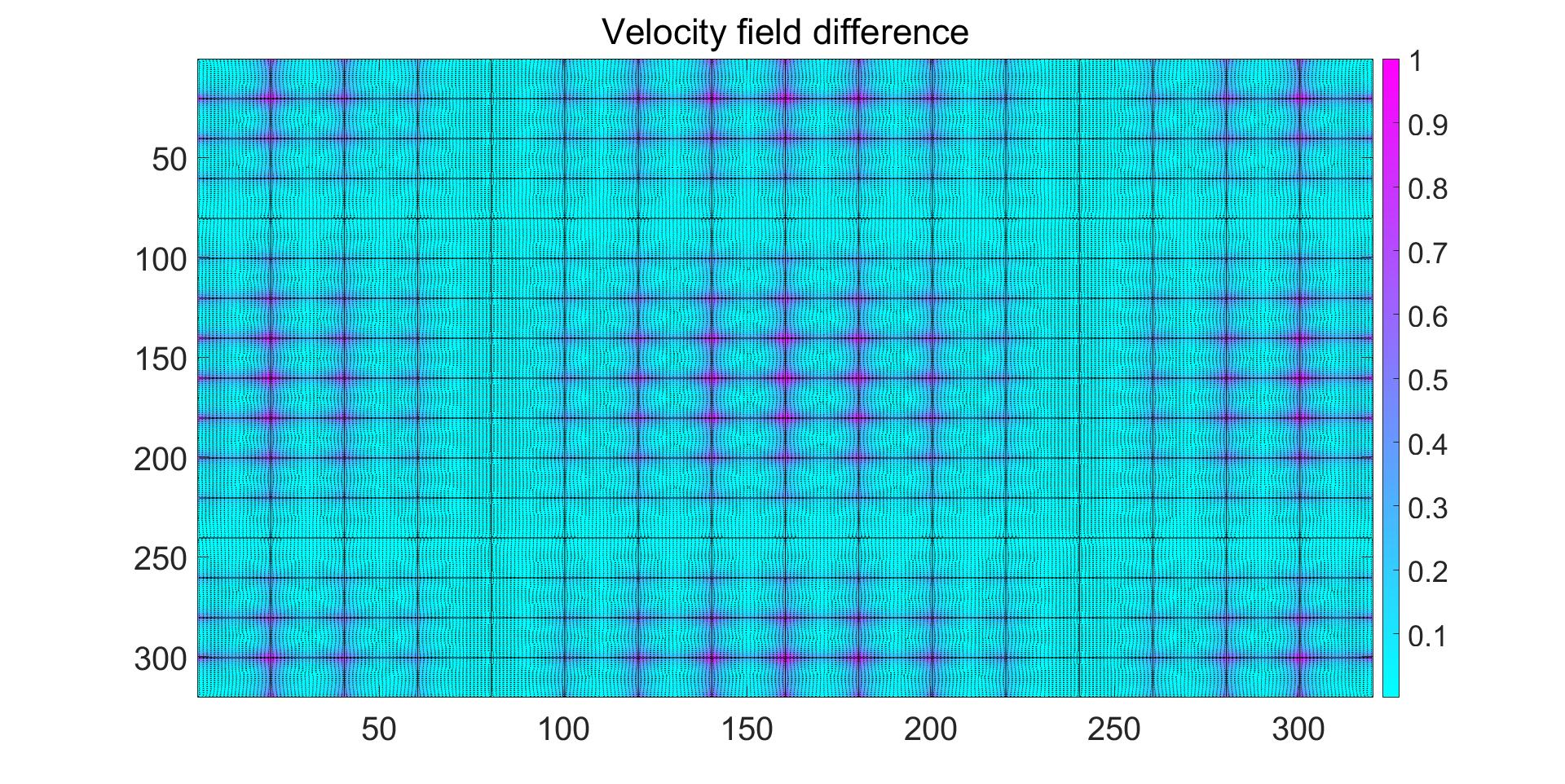}
	\end{minipage}
	\begin{minipage}{0.39\textwidth}\vspace{1cm}
		\centering
		\includegraphics[width = 1.0\textwidth]{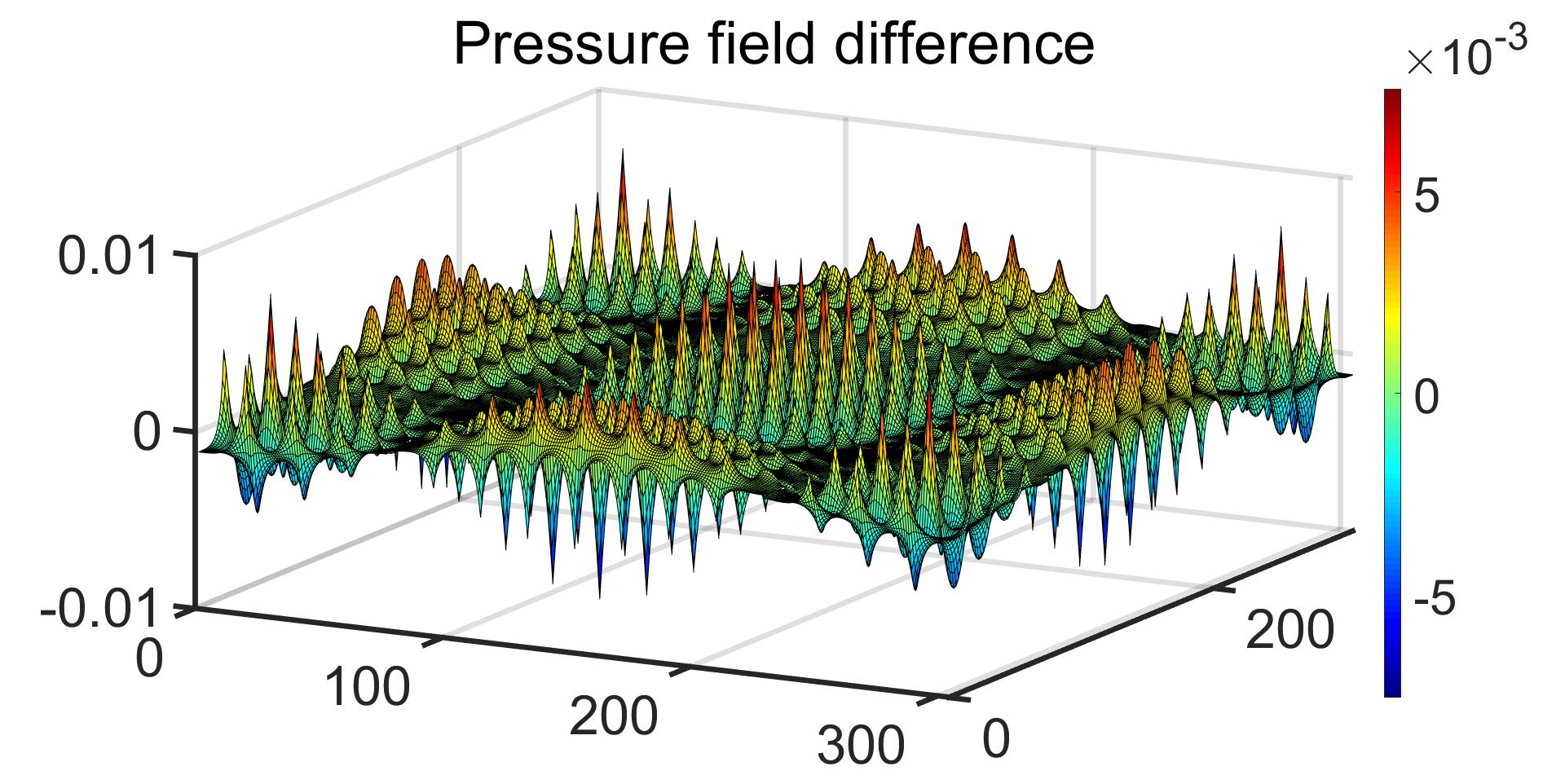}
	\end{minipage}

	\begin{minipage}{0.6\textwidth}
		\centering
		\includegraphics[width = 1.0\textwidth]{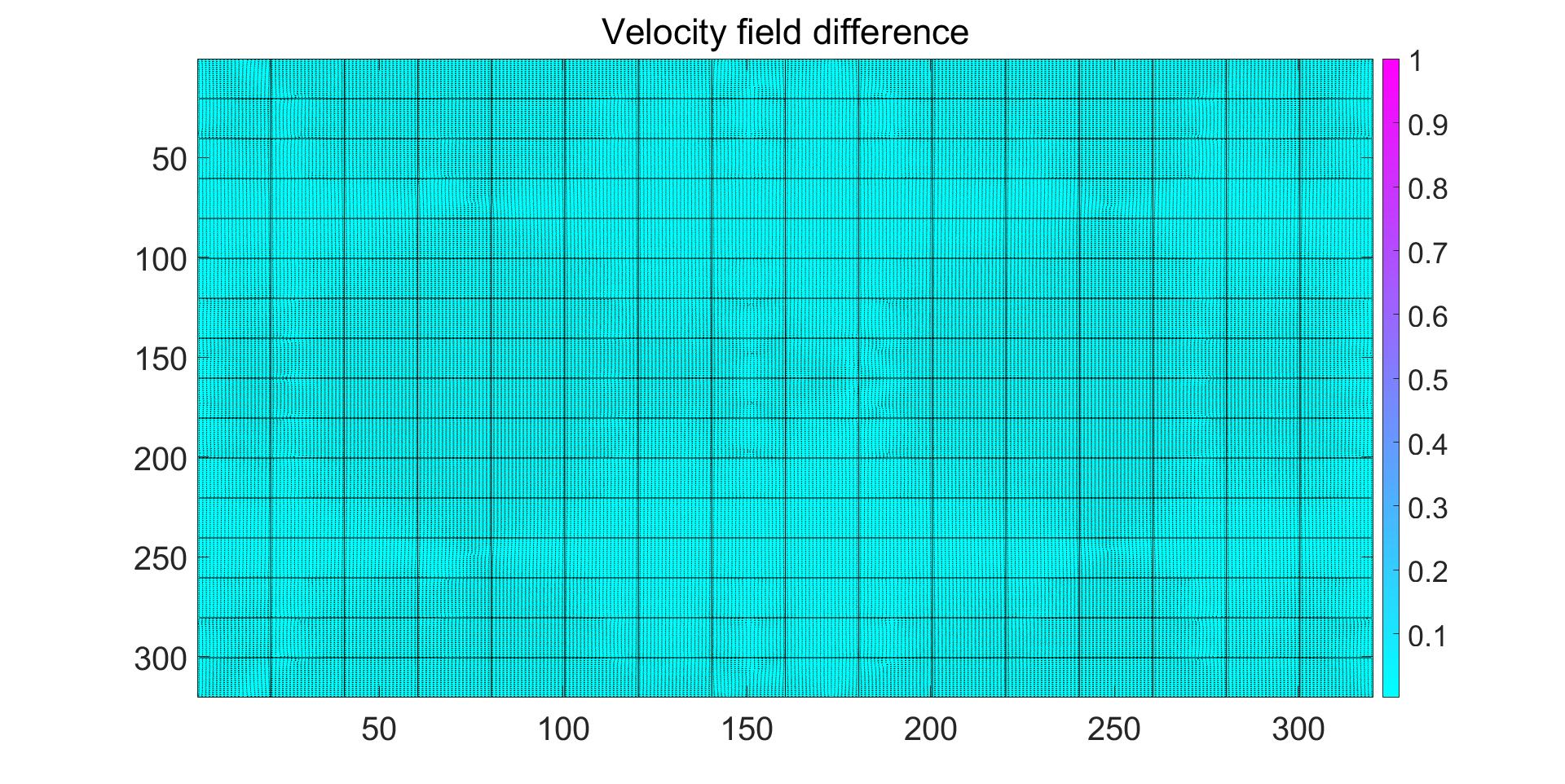}
	\end{minipage}
	\begin{minipage}{0.39\textwidth}\vspace{1cm}
		\centering
		\includegraphics[width = 1.0\textwidth]{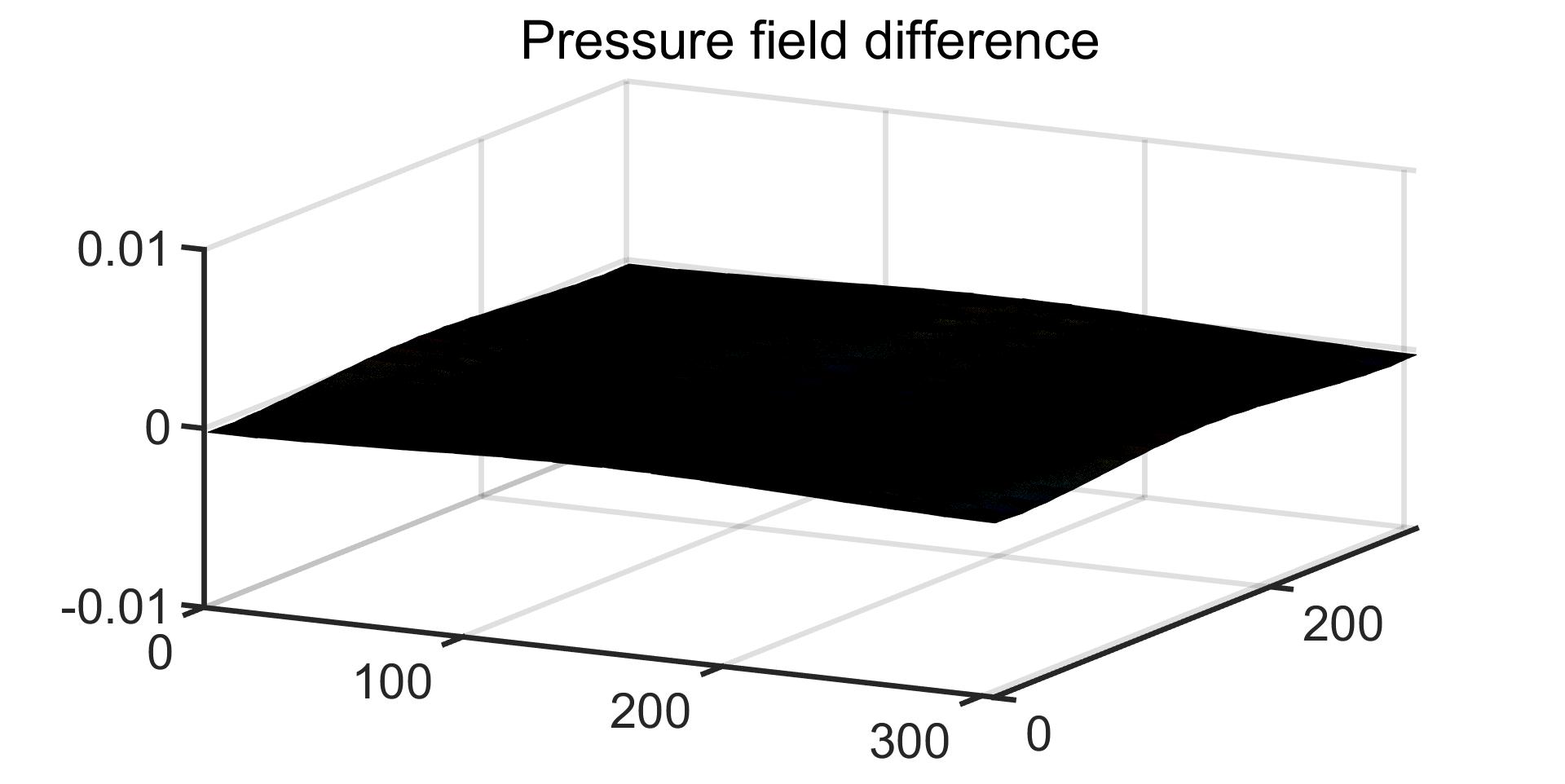}
	\end{minipage}
	\caption{Difference in multiscale solutions for the homogeneous problem with respect to the analytical solution, using different methods for Robin condition parameter $\alpha=1$: MRCM method (top), Our method (bottom).}
	\label{anacolor-1}
\end{figure}

\begin{figure}[H]
	\centering

	\begin{minipage}{0.6\textwidth}
		\centering
		\includegraphics[width = 1.0\textwidth]{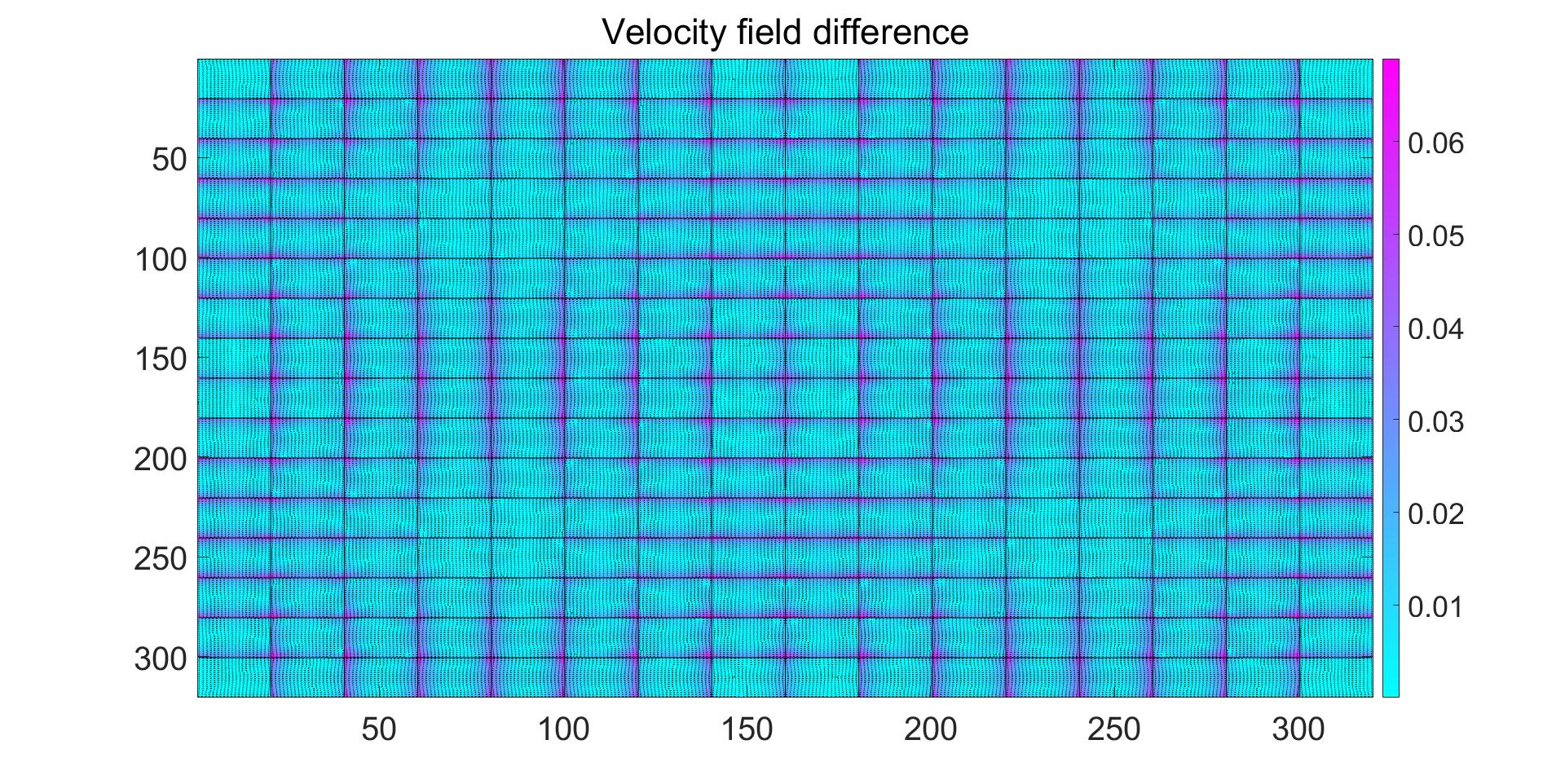}
	\end{minipage}
	\begin{minipage}{0.39\textwidth}\vspace{1cm}
		\centering
		\includegraphics[width = 1.0\textwidth]{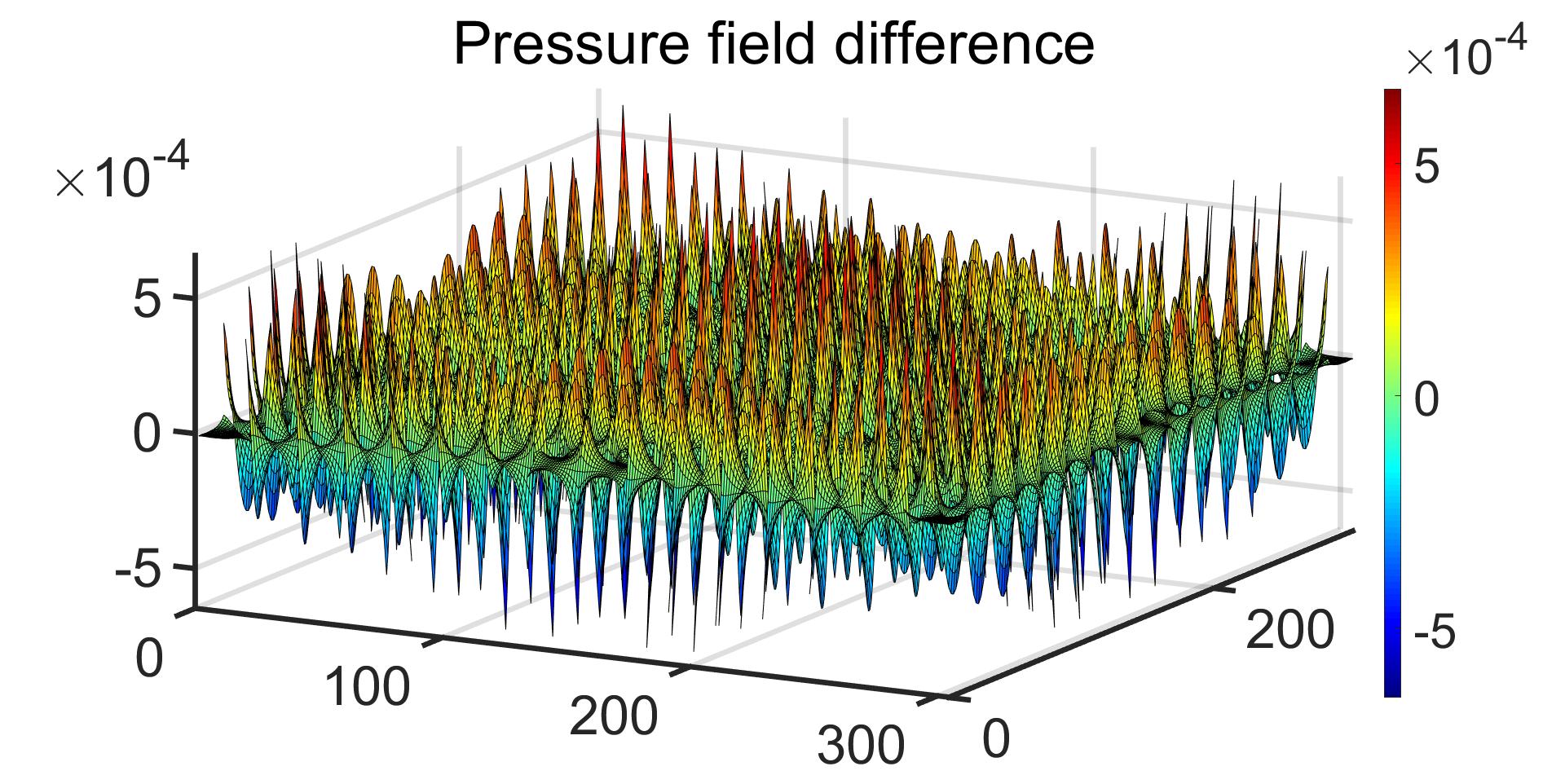}
	\end{minipage}

	\begin{minipage}{0.6\textwidth}
		\centering
		\includegraphics[width = 1.0\textwidth]{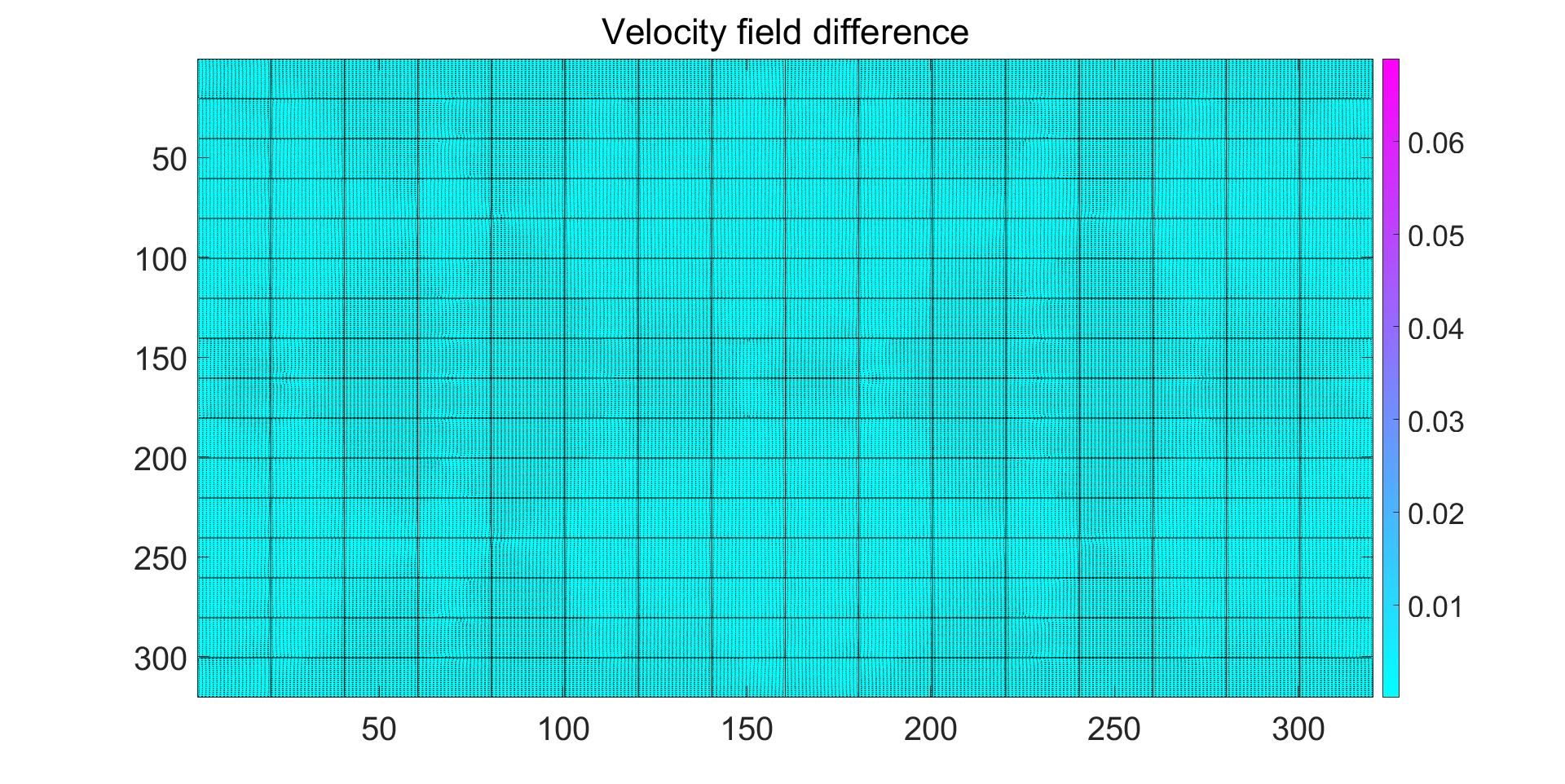}
	\end{minipage}
	\begin{minipage}{0.39\textwidth}\vspace{1cm}
		\centering
		\includegraphics[width = 1.0\textwidth]{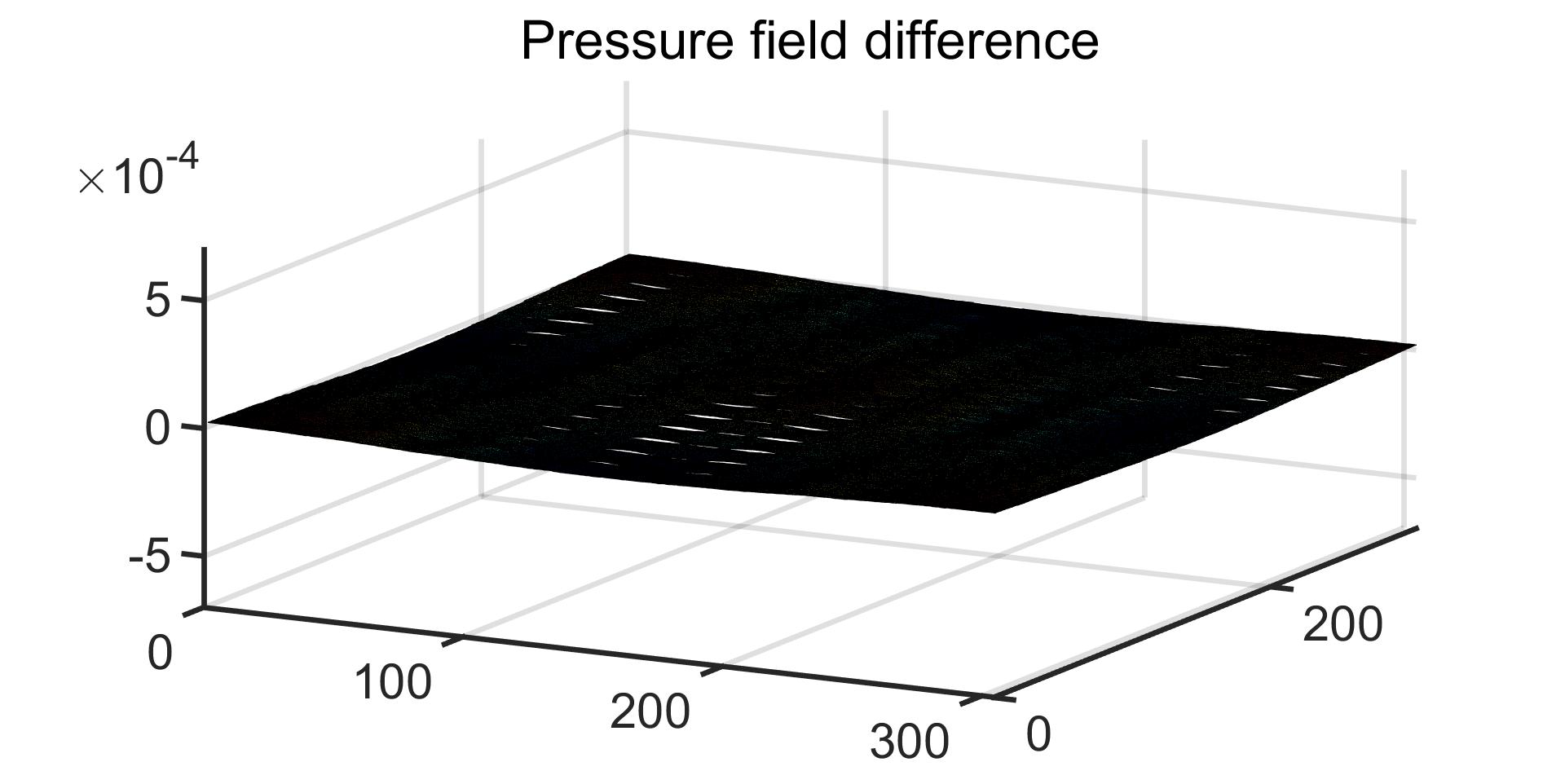}
	\end{minipage}
	\caption{Difference in multiscale solutions for the homogeneous problem with respect to the analytical solution, using different methods for Robin condition parameter $\alpha=100$: MRCM method (top), Our method (bottom).}
	\label{anacolor-100}
\end{figure}

\begin{figure}[H]
	\centering

	\begin{minipage}{0.6\textwidth}
		\centering
		\includegraphics[width = 1.0\textwidth]{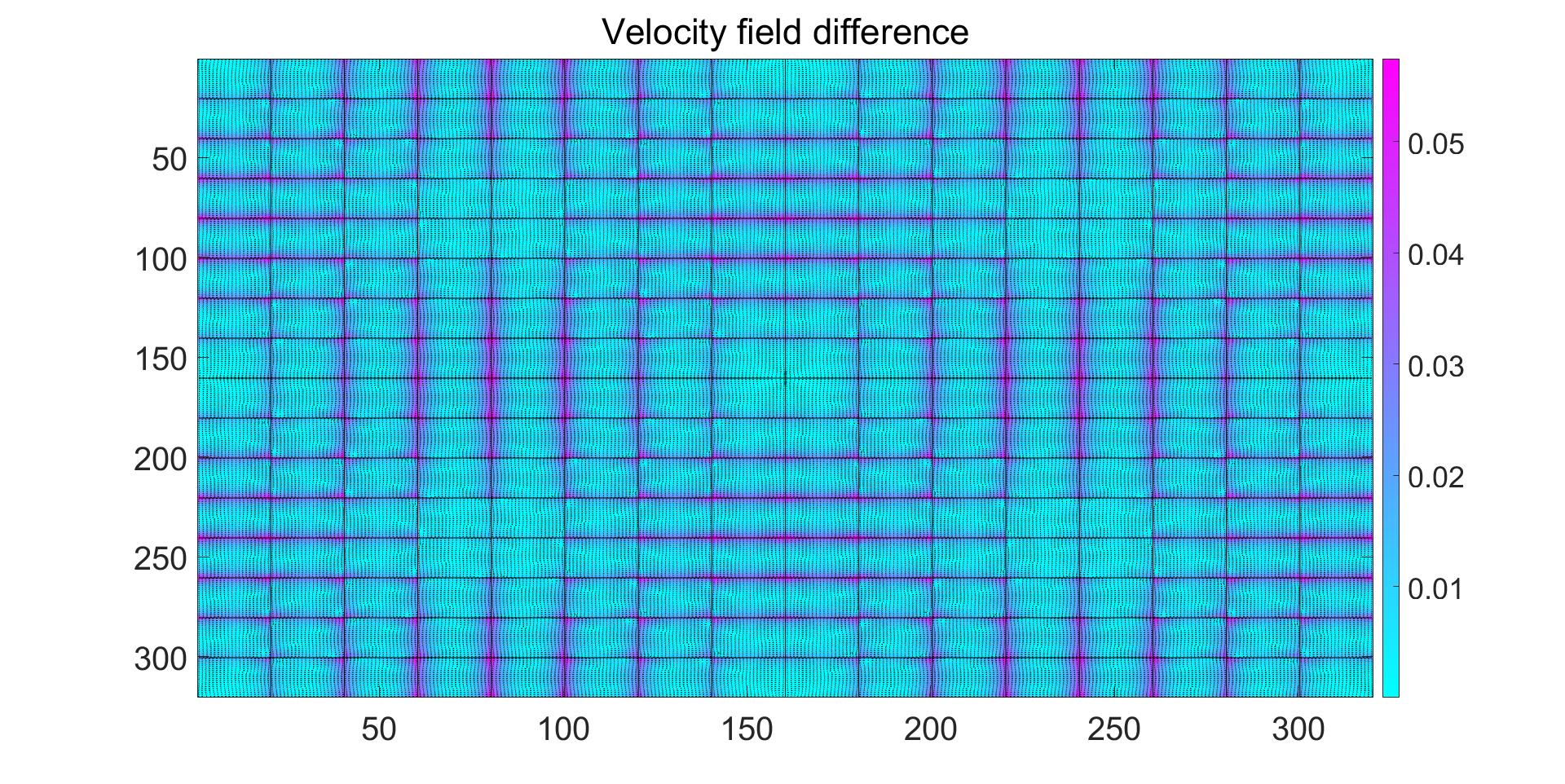}
	\end{minipage}
	\begin{minipage}{0.39\textwidth}\vspace{1cm}
		\centering
		\includegraphics[width = 1.0\textwidth]{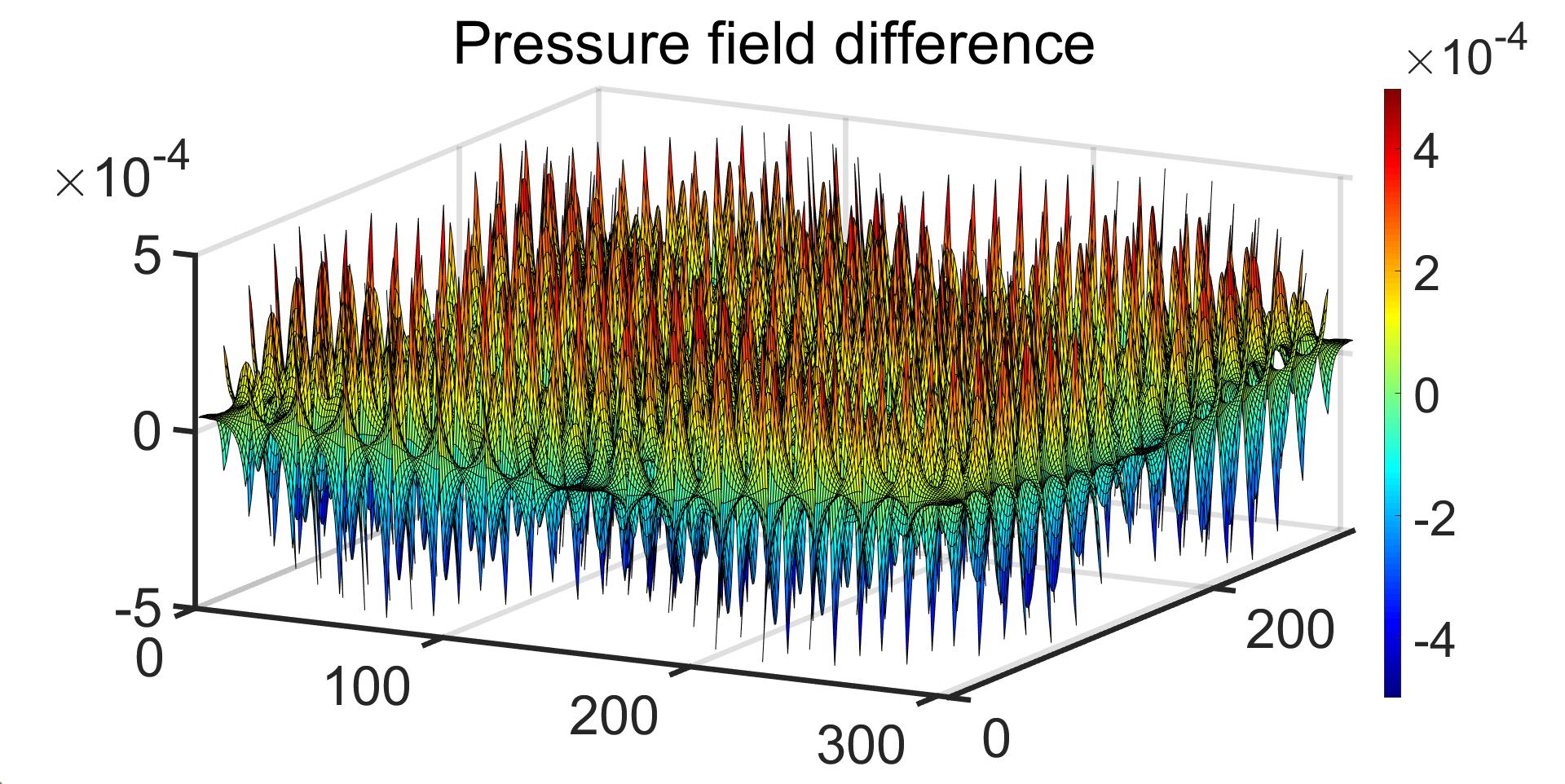}
	\end{minipage}

	\begin{minipage}{0.6\textwidth}
		\centering
		\includegraphics[width = 1.0\textwidth]{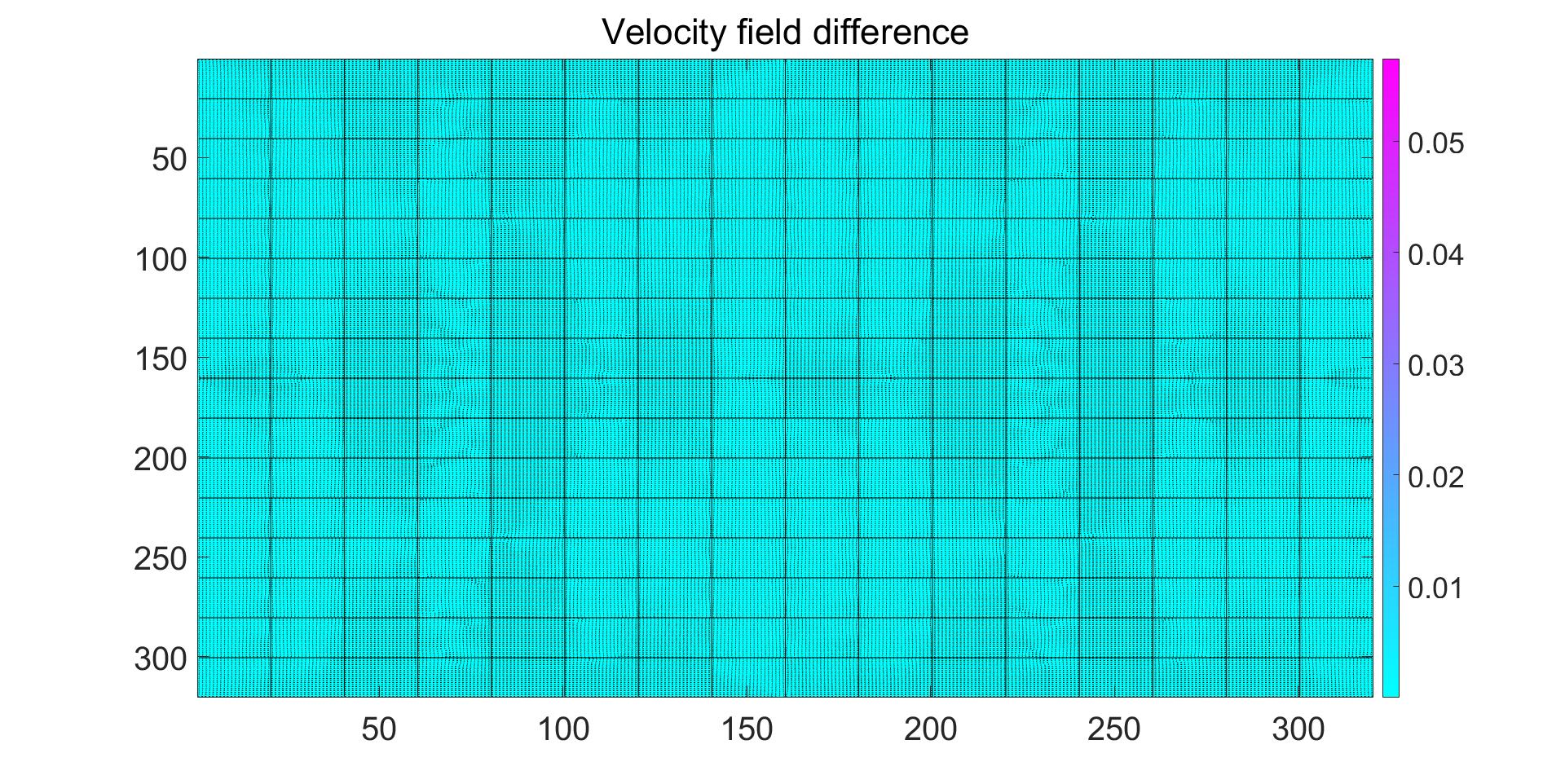}
	\end{minipage}
	\begin{minipage}{0.39\textwidth}\vspace{1cm}
		\centering
		\includegraphics[width = 1.0\textwidth]{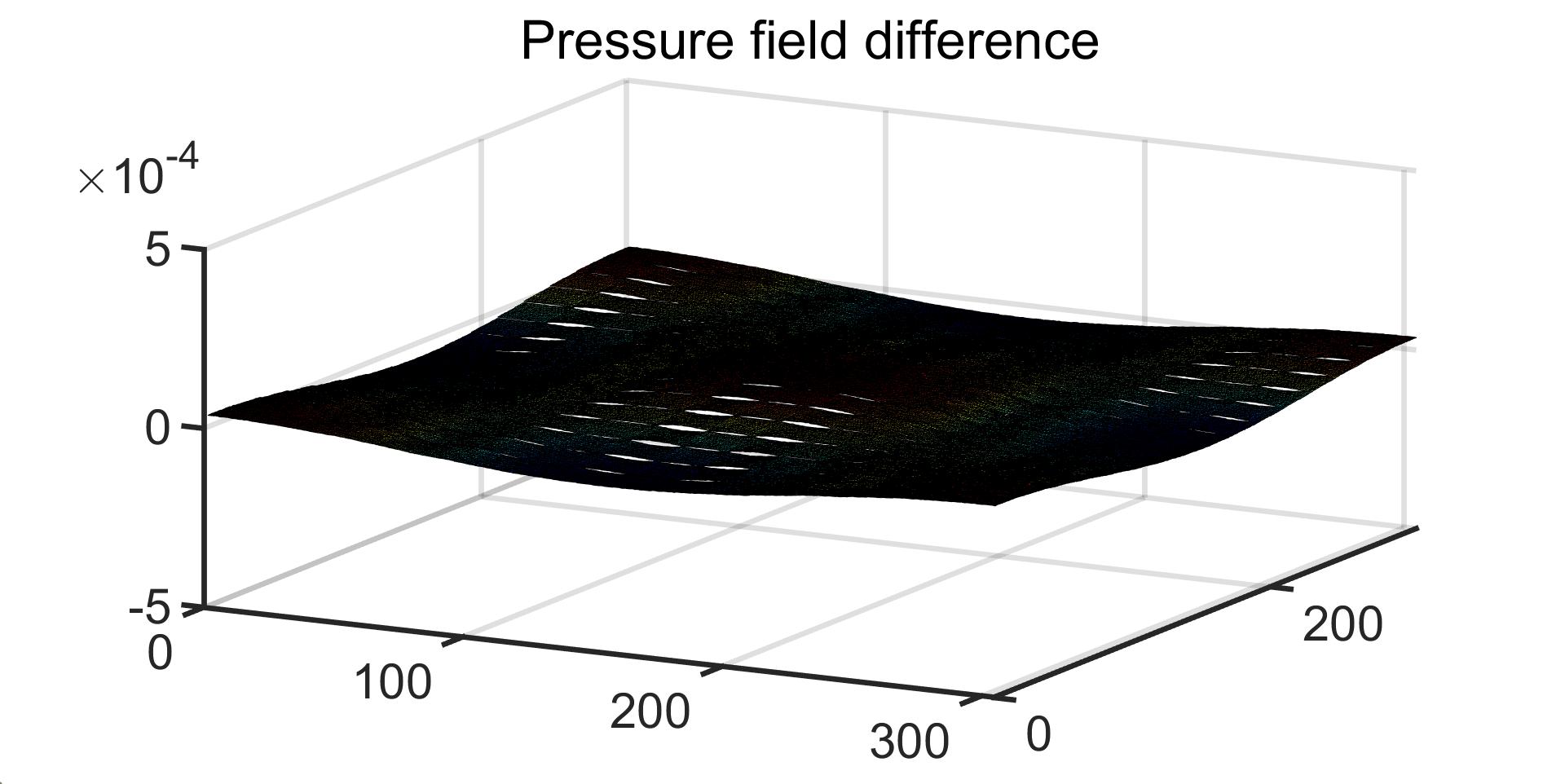}
	\end{minipage}
	\caption{Difference in multiscale solutions for the homogeneous problem with respect to the analytical solution, using different methods for Robin condition parameter $\alpha=10^8$: MRCM method (top), Our method (bottom).}
	\label{anacolor-1e8}
\end{figure}

Figures (\ref{anacolor-1e-8}) through (\ref{anacolor-1e8}) display the errors relative to the analytical solution for various values of parameter $\alpha$. 
Note that our method consistently exhibits less deviation in both pressure and flux variables compared to the original MRCM method. This observation underscores the reason behind the improvements mentioned earlier.

Despite achieving significant improvements, the computational cost of our method is only slightly higher than the of the original MRCM method. 
Table \ref{tab:cost_table} provides a summary of the costs associated with both the original MRCM and our method across various settings, all under the condition of a $16 \times 16$ decomposition of the domain.


\begin{minipage}{1\textwidth}
	\centering
\begin{tabular}{lccccc}
  \hline
  \textbf{Method} & \textbf{$l$} & \textbf{$kS$} & \textbf{NLC} & \textbf{Size of LP} & \textbf{Size of IP} \\ 
  \hline
  \textit{MRCM} & 0 & 0 & 9 & $20 \times 20$ & $4M(M-1) \times 4M(M-1)$\\
  \textit{OC-2} & 2 & 0 & 5 & $24 \times 24$ & $2M(M-1) \times 2M(M-1)$\\
  \textit{OC-2,2S} & 2 & 2 & 7 & $24 \times 24$ & $2M(M-1) \times 2M(M-1)$\\
  \textit{OL-2} & 2 & 0 & 9 & $24 \times 24$ & $4M(M-1) \times 4M(M-1)$\\
  \textit{OL-2,2S} & 2 & 2 & 11 & $24 \times 24$ & $4M(M-1) \times 4M(M-1)$\\
  \textit{OL-4} & 4 & 0 & 9 & $28 \times 28$ & $4M(M-1) \times 4M(M-1)$\\
  \textit{OL-4,4S} & 4 & 4 & 13 & $28 \times 28$ & $4M(M-1) \times 4M(M-1)$\\
  \hline
\end{tabular}
	\captionof{table}{A comparison of the complexity of the algorithms: MRCM $\times$ MRCM-OS,  in which 
   $l$: oversampling size; $k$: $\#$ smoothing steps; NLC: $\#$ of local problems per core; LP: local problem IP: interface problem.}	
	\label{tab:cost_table}
	\vspace{0.5cm}
\end{minipage}

In Table \ref{tab:cost_table}, $l$ denotes the oversampling size $lh$, $k$ represents the number of smoothing steps, 
NLC  is the number of local problems computed for each core, LP refers to the local problem, while IP denotes the interface problem.

\begin{rmk} 
We remark that, by reusing the LDL$^T$-factorization, our method, even with an oversampling size of $4h$ and 4 smoothing steps, our computational costs do not significantly exceed those of the original MRCM. 
\end{rmk}

\subsubsection{Heterogeneous problem}
In this section, we present the results obtained for the heterogeneous problem for different values of parameter $\alpha$ .

In Fig.(\ref{het-number}), we employ an oversampling size of $4h$ to study the optimal number of smoothing steps and present the $L^2(\Omega)$ 
norm relative error with respect to the fine grid solution. Note that from the results reported in Fig. (\ref{het-number}), the optimal number of smoothing steps lies also within the range of 2 to 4, as increasing the number of steps beyond this range results in a diminishing slope.

\begin{figure}[H]
	\centering
	\begin{minipage}{0.49\textwidth}
		\includegraphics[width = 1.0\textwidth]{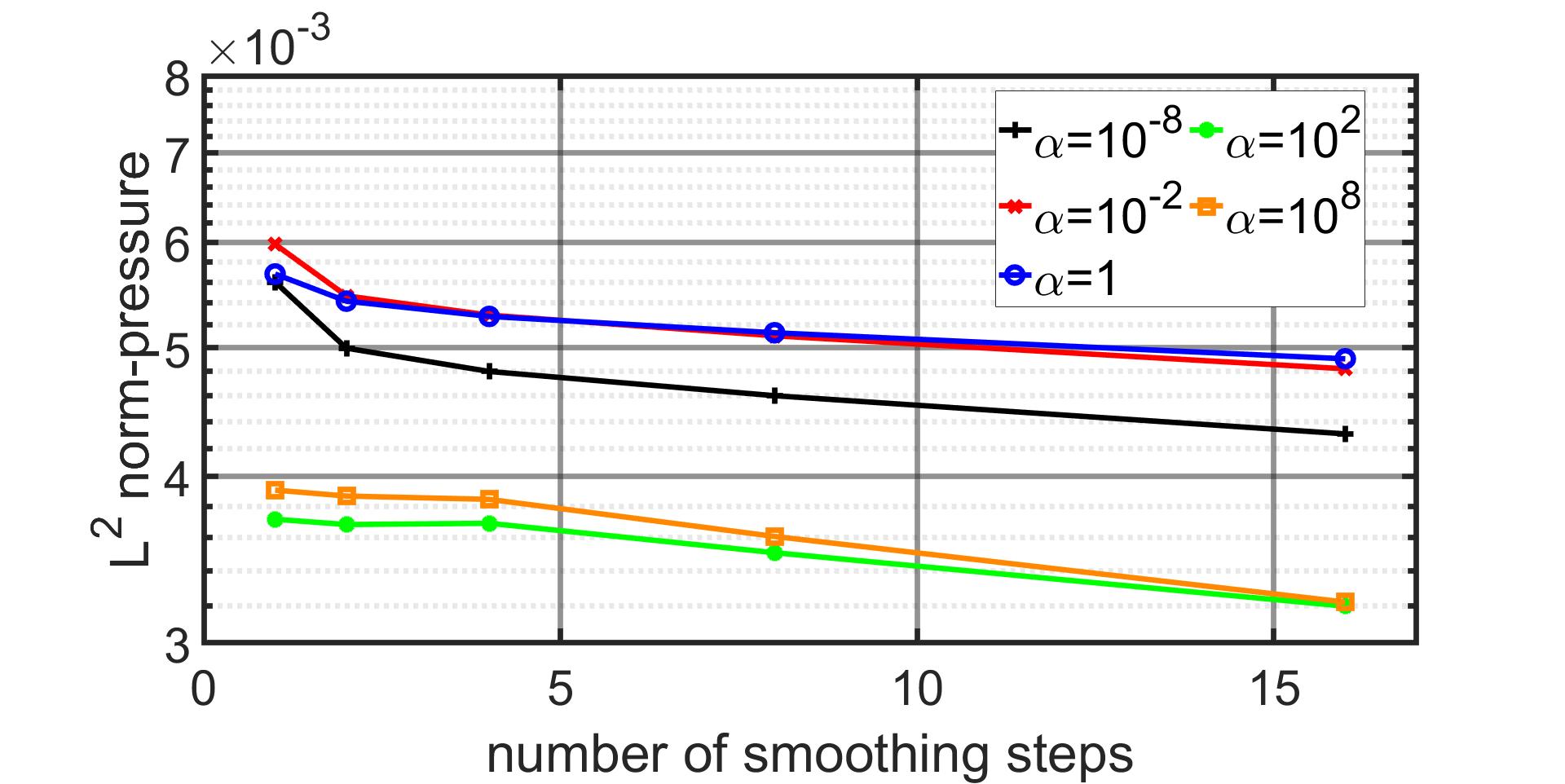}
	\end{minipage}
	\begin{minipage}{0.49\textwidth}
		\includegraphics[width = 1.0\textwidth]{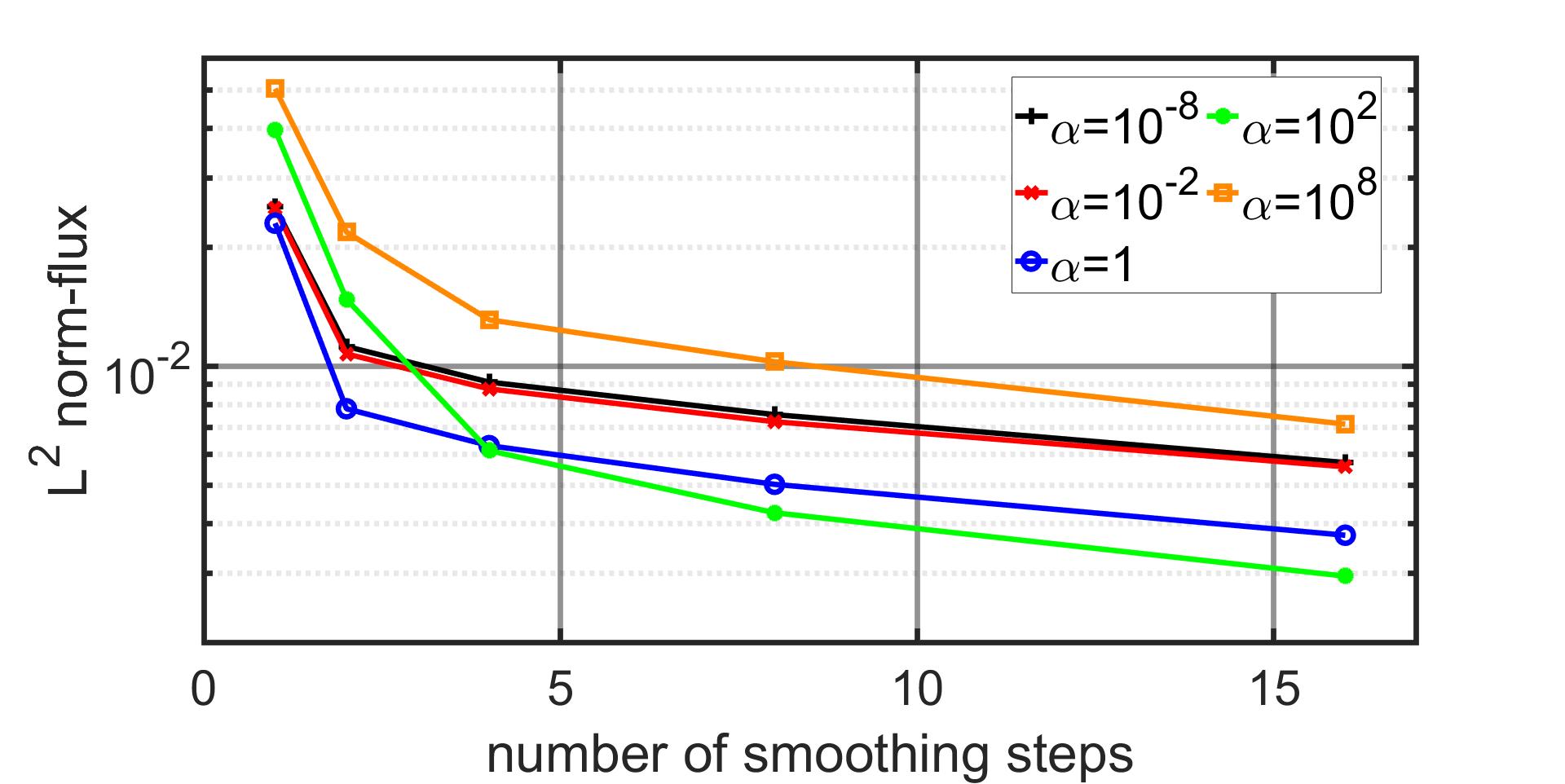}
	\end{minipage}
	\caption{Exploring the optimal number of smoothing steps for the heterogeneous problem with oversampling size $2h$: Absolute pressure error (left) and absolute flux error (right).}
	\label{het-number}
\end{figure}


Next, in Fig.(\ref{het-alphastudy}) we present a study on parameter $\alpha$ and set the oversampling size to $2h$ and $4h$. The number of smoothing steps is set to 2 and 4 (the good result obtained with $4h$ for the oversampling size and 4 smoothing steps has been discussed previously, and here it serves as a benchmark for comparison). Fig.(\ref{het-alphastudy}) shows the $L^2(\Omega)$ norm relative error with respect to the fine grid solution.

\begin{figure}[H]
	\centering
	\begin{minipage}{0.49\textwidth}
		\includegraphics[width = 1.0\textwidth]{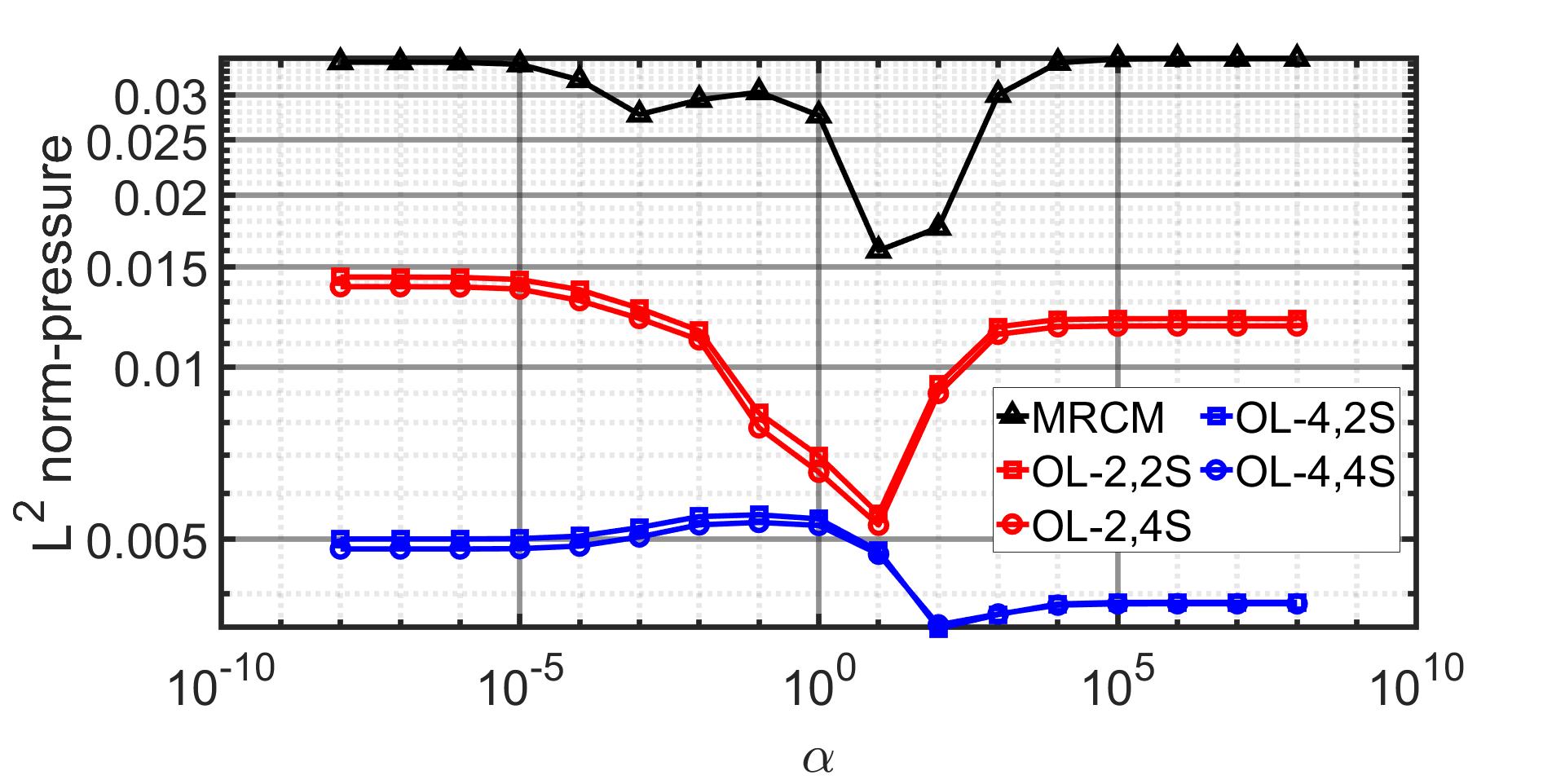}
	\end{minipage}
	\begin{minipage}{0.49\textwidth}
		\includegraphics[width = 1.0\textwidth]{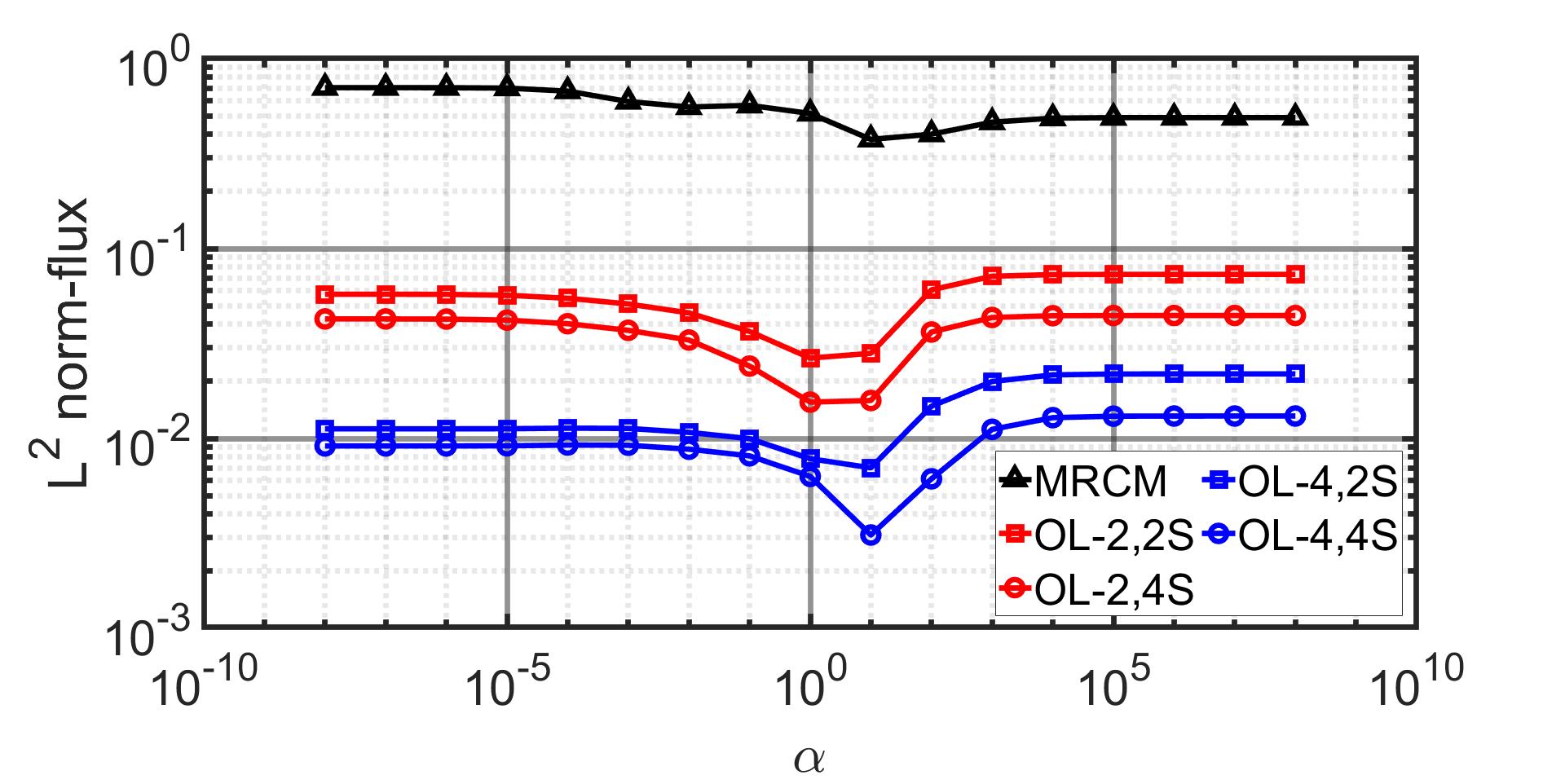}
	\end{minipage}
	\caption{$\alpha$ parameter study for the heterogeneous problem with oversampling sizes $2h$ and $4h$, and 2 or 4 smoothing steps: Pressure (left) and flux (right).}
	\label{het-alphastudy}
\end{figure}

The results reported above for the heterogeneous problem show substantial improvement of MRMC-OS over the original MRCM. 
Next, we discuss the differences between the two procedures taking advantage colored pictures.
Figs. (\ref{hete-OL4-pic}), (\ref{compare1e8}), and (\ref{compare1e-8}) provide visual comparisons of the multiscale solution (velocity and pressure) generated by both the original MRCM method and our method under three distinct scenarios.
In Fig. (\ref{compare1e8}), we set the Robin condition parameter to a very large value ($\alpha=10^8$). In this scenario, the MRCM outcome resembles the MHM solution, with flux continuity dominating along the interface $\Gamma$. However, the pressure continuity along $\Gamma$ is notably poor, featuring significant jumps.
In Fig. (\ref{compare1e-8}), we set the Robin condition parameter to a very small value ($\alpha=10^{-8}$). Consequently, along the interface $\Gamma$, we observe 
improved continuity in the pressure, with significant flux jumps. In this case, the MRCM outcome resembles the MMMFEM solution.
Concerning Fig. (\ref{hete-OL4-pic}), we select an intermediate value for the Robin condition parameter ($\alpha=1$). This choice ensures that neither pressure nor flux continuity dominates along $\Gamma$. As a result, both pressure and flux exhibit only minor jumps within the interface. 
Fig. (\ref{hete-OL4-pic}) depicted that when the value of $\alpha$ is close to 1, the method yields more accurate results compared to cases where $\alpha$ is either very large or very small.
\begin{figure}[H]
	\centering

	\begin{minipage}{0.9\textwidth}
		\centering
		\includegraphics[width = 1.0\textwidth]{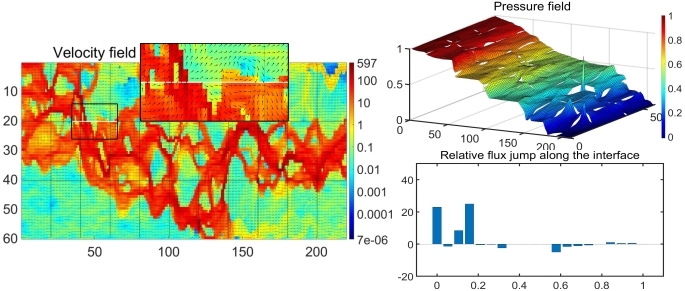}
	\end{minipage}
	
	\begin{minipage}{0.9\textwidth}\vspace{1cm}
		\centering
		\includegraphics[width = 1.0\textwidth]{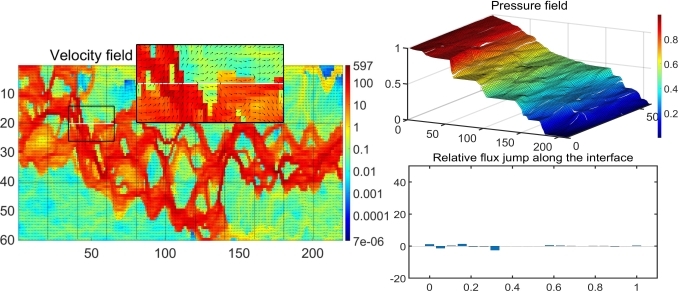}
	\end{minipage}

	\caption{Multiscale solution for heterogeneous problem produced by different methods for Robin condition parameter $\alpha=10^8$ : MRCM method (top), our method (bottom).}
	\label{compare1e8}
\end{figure}

\begin{figure}[H]
	\centering

	\begin{minipage}{0.9\textwidth}
		\centering
		\includegraphics[width = 1.0\textwidth]{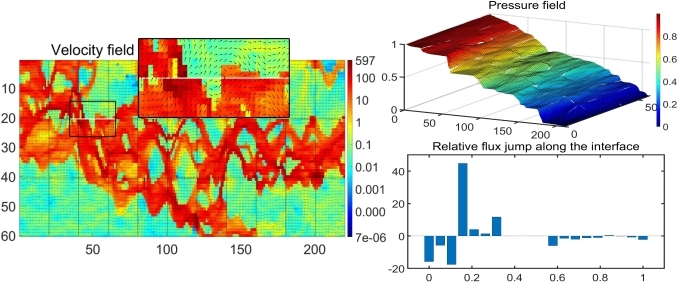}
	\end{minipage}
	\begin{minipage}{0.9\textwidth}\vspace{1cm}
		\centering
		\includegraphics[width = 1.0\textwidth]{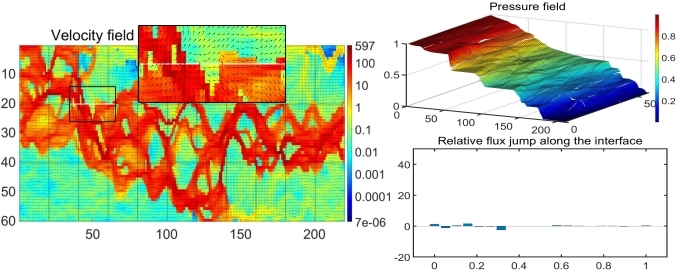}
	\end{minipage}

	\caption{Multiscale solution for heterogeneous problem produced by different methods for Robin condition parameter $\alpha=10^{-8}$ : MRCM method (top), our method (bottom).}
	\label{compare1e-8}
\end{figure}

In summary, it is observed that the continuity of the pressure field in the original MRCM is comparatively less pronounced than in our method, particularly evident when $\alpha=10^8$. Additionally, the original MRCM exhibits a more pronounced flux discontinuity along the interface for the velocity field. These findings 
provide an indication of why our methodology achieves a two-order-of-magnitude improvement in the flux variable.

\section{Conclusions}

In conclusion, our study introduces two novel methodologies, oversampling and smoothing, tailored to augment the original MRCM \cite{guiraldello2018multiscale}, aimed at refining the approximation of subsurface flows. The oversampling approach strategically computes the multiscale basis function within an extended region, effectively mitigating small-scale errors along the interface $\Gamma$ in non-overlapping partitions. Further refinement is achieved through smoothing techniques. Through numerical experiments, our findings demonstrate that our proposed methodology yield a notable improvement in accuracy. Specifically, our method exhibits a two-orders-of-magnitude improvement in flux variables and a one-order-of-magnitude enhancement in pressure, as compared to the original MRCM, across both analytical and heterogeneous problems. Importantly, these improvements are achieved without significant additional computational overhead.

In addition to the aforementioned techniques, our ongoing research has led to the development of  novel informed spaces, which shows promise in further minimizing errors, particularly in three-dimensional scenarios. Unlike higher-order polynomial spaces, the informed space offers simplicity in construction while maintaining effectiveness in error reduction. Our forthcoming paper will delve deeper into the exploration and refinement of these spaces, shedding light on its potential contributions to the field of subsurface flow approximation.

\bigskip
\noindent {\bf Acknowledgments} 

The authors acknowledge the National Laboratory for Scientific Computing (LNCC/MCTI, Brazil) for providing HPC resources of the S. Dumont supercomputer, which have contributed to the research results reported within this paper (URL: http://sdumont.lncc.br). Additionally, the computing resources of the Cyber-Infrastructure Research Services at the University of Texas at Dallas Office of Information Technology were utilized. The authors would like to express their gratitude to Dr. M\'arcio Borges for his assistance in accessing the LNCC cluster. 

\bigskip
\noindent {\bf Declaration of generative AI and AI-assisted technologies in the writing process}

During the preparation of this work the authors used chatGPT in order to improve language and readability. 
After using this tool, the authors reviewed and edited the content as needed and take full responsibility for the content of the publication.

\bibliography{dilongref1,dilongref2,dilongref3}
\bibliographystyle{unsrt}

\end{document}